\documentclass[12pt]{article}%
\pdfoutput=1
\usepackage{amsfonts}
\usepackage{amsmath}
\usepackage{amssymb}
\usepackage{graphicx}%
\setcounter{MaxMatrixCols}{30}
\providecommand{\U}[1]{\protect\rule{.1in}{.1in}}
\newtheorem{theorem}{Theorem}

\newtheorem{corollary}[theorem]{Corollary}

\newtheorem{lemma}[theorem]{Lemma}

\newtheorem{proposition}[theorem]{Proposition}

\newenvironment{proof}[1][Proof]{\noindent\textbf{#1.} }{\ \rule{0.5em}{0.5em}}
\begin{document}

\title{The Whittaker Plancherel Theorem}
\author{Nolan R. Wallach}
\maketitle

\begin{abstract}
The purpose of this article is to give an exposition of a proof of the
distributional form of the Whittaker Plancherel Theorem. The proof is an
application of Harish-Chandra's Plancherel Theorem for real reductive groups
and its exposition can be used as an introduction to Harish-Chandra's Plancherel Theorem.
The paper follows the basic method in the author's original approach in his
second volume on real reductive groups. An error in the calculation of the
Whittaker Transform of a Harish-Chandra wave packet is fixed using a result of
Rapha\"{e}l Beuzart-Plessis.

\end{abstract}

\section{Introduction}

This paper has its roots in the final chapter of my Real Reductive Groups II.
Its goal was to show how to use the Harish-Chandra Plancherel theorem for real
reductive groups to decompose the unitarily induced representation from a
generic one dimensional character of a maximal unipotent subgroup (the
Whittaker Plancherel Theorem). This work has been widely cited by researchers
in the theory of automorphic forms but, unfortunately, there are several
serious gaps in the main arguments. The first was indicated in \cite{BanK}.
Which made me aware that the theory of cusp forms in the Whittaker case is
much more subtle than in the Harish-Chandra theory. After I fixed that error
in the book\ the first referee of the corresponding paper found a new gap
which was promulgated from the original. The referee of the second version of
the manuscript found a related gap which was also implicit in the original
version in \cite{RRGII} as did the referee of the third version. This version
gives a complete proof of the theorem based on the basic ideas in my original
proof augmented by a powerful convergence theorem of Beuzart-Plessis
\cite{raphael}.

The two volumes on Real Reductive Groups were written over a period of 15 years
starting in 1977, with the first volume appearing in 1987 and the second in
1992 the work on Whittaker vectors was done in the early 1980's at the same
time that Harish-Chandra was doing his work. Recently V.S.Varadarajan and R.
Gangolli have edited Volume V \cite{CollectedV}\ of Harish-Chandra's collected
work and have, in a heroic piece of work, organized a collection of his
unpublished notes of his work on the Whittaker Plancherel Theorem as part III
of the volume. As it turns out neither of us completed the proof of the
theorem although we did independently find the same statements down to th
same definition of the Whittaker Schwartz space. The key missing ingredient
has to do with the completeness of the space of wave packets.

As indicated in the beginning of this introduction one of the main goals of
this paper is to fix the gaps and errors in Chapter 15 of \cite{RRGII}. I will
refer to results in that chapter that are correct when the defective results
are replaced by those in this paper without additional proof. In this paper I
will follow the tradition of Harish-Chandra and call the inversion formula the
Plancherel Formula (see Theorem \ref{plancherel} in this paper for
Harish-Chandra's statement). In Corollary \ref{12-version} I show how the
theorem implies an $L^{2}$ version which could be interpreted as an analog of
the Parseval Formula.

Here is a description of the \ flow of this paper. The purpose of Section 2 is
to set up the notation used in the rest of the paper. Section 3 gives a
description of Harish-Chandra's work on what he calls cusp forms for the
group, in the context of the Harish-Chandra Schwartz space, and their
relationship with the discrete series. Section 4 begins the development of the
Whittaker version of the Schwartz space. Theorem \ref{15.3.Replace}, combined
with Corollary \ref{goodenough} in that section replaces the error found by
\cite{BanK}. Section 5 gives a proof of the Whittaker version of
Harish-Chandra's theorem asserting that an eigenfunction for the bi-invariant
differential operators that is in the Schwartz space is a cusp form. Section 6
gives an exposition of the Harish-Chandra Plancherel theorem. The reader
should be warned that in \cite{RRGII} the Harish-Chandra Plancherel density,
$\mu(\omega,i\nu)$, needs several normalizing constants. In this paper we will
be \textquotedblleft sloppy\textquotedblright\ about normalizing constants and
measures. These aspects were handled in \cite{RRGII}. The density we write as
$\mu(\omega,\nu)$ and it contains all of these normalizations. Theorem
\ref{HCimplication} is the most important part of the Harish-Chandra
Plancherel Theorem for the purposes of the proof of the Whittaker version of
the theorem. Section 7 \ involves a major deviation from \cite{RRGII} giving a
decomposition of the Whittaker Schwartz space in terms of Theorem
\ref{HCimplication}. Section 8 gives a description of the holomorphic
continuation of the Jacquet integrals. It also proves key tempered estimates
for this analytic continuation. Section 9 contains a critical result on the
Fourier transform of wave packets leading our proof of the relationship
between the discrete series for $G$ and the Whittaker discrete series. The
paper next approaches the continuous spectrum. This is where the result in
\cite{raphael} plays an important role. Section \ref{Final} contains our
version of the Whittaker Plancherel Formula. This version is new to the paper
and is based on a preliminary result in Section \ref{Firstform} that is also
new to this paper.

I heartily thank the readers of the first, second and third versions of this
paper for their detailed discussion of key flaws. Many aspects of the second
volume of \cite{RRGII} were \textquotedblleft written in a
vacuum\textquotedblright. \cite{RRGI} contained many complicated results, but
almost all of them could be given an exposition in a graduate course or a
technical seminar and, in fact, they were. \cite{RRGII} was a different story.
A year long graduate course could not handle the shear technical difficulty of
either the Whittaker Plancherel theorem or the Harish-Chandra Plancherel
Theorem. Thus the added filter was not possible for these highly technical,
difficult theorems. The readers played the role of an exceptionally qualified,
helpful and critical audience. Also, I placed arXiv:2206.14773 on the arXiv
with the hope that someone would come forth with a general proof of the
convergence theorem that was critical to my way of calculating the Jacquet
integral of a wave packet. I thank Rapha\"{e}l Beuzart-Plessis for responding
with a reference to his proof and thus closing a long painful story. I also
thank the referee of the present version of this paper for an immense and very
helpful piece of work. In addition, I thank Herv\'{e}\ Jacquet for his many
clarifying suggestions, perceptive criticism and for his kindness.

\section{Notation}
We will be studying the class of real reductive groups $G$ as in \cite{BoWa}
and \cite{RRGI},\cite{RRGII}. Recall that, there is an algebraic subgroup $G_{1}$
of $GL(n,\mathbb{R})$ (for some $n$) invariant under transpose such that $G$
is a finite covering of an open subgroup of $G_{1}$. Let $p:G\rightarrow
G_{1}$ be the covering map.

Let $K$ be a maximal compact subgroup of $G$ (which we can take as
$p^{-1}(G_{1}\cap O(2n))$) and let $\theta$ be the corresponding Cartan
involution. Let $G=KA_{o}N_{o}$ be an Iwasawa decomposition of $G$. Let
$P_{o}$ be the normalizer of $N_{o}$ in $G$. The minimal parabolic subgroups
are the $G$ conjugates of $P_{o}$. Since $G=KP_{o}$ they are all $K$
conjugate. A parabolic subgroup of $G$ is a subgroup that is its own
normalizer containing a minimal parabolic subgroup. Let $P$ be a parabolic
subgroup. We set $M_{P}=P\cap\theta P$ and $N_{P}$ equal to the unipotent
radical of $P$. Then $P=M_{P}N_{P}$ is called a ($K$--) standard Levi
decomposition. Let $C$ denote the center of $M_{P}$. If $\mathfrak{a}%
_{P}=\{x\in Lie(C)|\theta x=-x\}$ then we set $A_{P}=\exp(\mathfrak{a}_{P})$.
Note that
\[
\exp:\mathfrak{a}_{P}\rightarrow A_{P}%
\]
is an isomorphism of Lie groups ($\mathfrak{a}_{P}$ a Lie group under
addition). Define (as usual) $\log:A_{P}\rightarrow\mathfrak{a}_{P}$ to be the
inverse of $\exp$. Set $^{o}M_{P}=\cap_{\mu}\ker\mu$, the intersection is over
all Lie group homomorphisms
\[
\mu:M_{P}\rightarrow\mathbb{R}_{>0},
\]
then $M_{P}={}^{o}M_{P}A_{P}$ and $P=^{o}M_{P}A_{P}N_{P}$ with unique
decomposition. Note that $^{o}M_{P}$\ is in the class of groups that we are
studying for all parabolic subgroups including $G$. We will also use the
notation $^{o}G$ for $^{o}M_{G}$. The above decomposition is usually called a
Langlands decomposition. We will use the notation $\overline{P}=\theta P$ the
standard opposite parabolic to $P$. We note that $\overline{P}={}^{o}%
M_{P}A_{P}\theta\left(  N_{P}\right)  $ and we write $\theta\left(
N_{P}\right)  =\overline{N}_{P}$ . \ Applying this standard material to $P_{o}
$ and using the notation $X_{P_{o}}=X_{o}$ for $X=M,A,N,\mathfrak{a}$ \ we
have $P_{o}={}^{o}M_{o}A_{o}N_{o}$. Also the Iwasawa decomposition implies
that $G=N_{o}A_{o}K$ with unique decomposition. So if $g\in G$ then
$g=n(g)a(g)k(g)$ with $n(g)\in N_{o},a(g)\in A_{o},k(g)\in K$. We will call a
parabolic subgroup standard if it contains $P_{o}$.

As usual, $U(Lie(G))$ denotes the universal enveloping algebra of $Lie(G)$. We
look upon $U(Lie(G))$ as the left invariant differential operators on $G$. We
use the standard notation $Z_{G}(Lie(G))$ for $Ad(G)$--invariants in
$U(Lie(G))$ and $Z(Lie(G))$ for the center of $U(Lie(G))$. \ Note that since
$G$ has a finite number of connected components $Z(Lie(G))$ is a finitely
generated $Z_{G}(Lie(G))$ module under multiplication.

Let $n=\dim N_{o}$ and let $\mu(g)=\wedge^{n}Ad(g)$and let
\[
V=\mathrm{Span}\mu(g)\wedge^{n}Lie(N_{o})\subset\wedge^{n}Lie(G)
\]
then $(\mu,V)$ is a real representation of $G$. On $Lie(G)$ we put an inner
product on $Lie(G)$ such that if $\theta X=-X$ then $adX$ is self adjoint and
such that $K$ acts orthogonally. One can show that this inner product can be
taken to be of the form $-B(\theta X,Y)$ with $B$ a symmetric bilinear form on
$Lie(G)$ such that
\[
B(Ad(g)X,Ad(g)Y)=B(X,Y).
\]
On $Lie(G)$ (and any subspace thereof we will use the norm $\left\Vert
X\right\Vert =(-B(\theta X,X))^{\frac{1}{2}}$. The inner product naturally
induces an inner product on $\wedge^{n}Lie(G)$. Let $\left\Vert ...\right\Vert
$ denote the operator norm on $\mathrm{End}(\wedge^{n}Lie(G)) $. Let
$G=A_{G}{}^{o}G$ be the $K$--standard Langlands decomposition of $G$. Define
for $g\in G,g=ag_{1}$ with $a\in A_{G},g_{1}\in{}^{o}G$, $\left\Vert
g\right\Vert =e^{\left\Vert \log a\right\Vert }\left\Vert \mu(g_{1}%
)_{|V}\right\Vert ^{\frac{1}{2}}$

Then $\left\Vert ...\right\Vert $ satisfies the following four properties

1. $\left\Vert xy\right\Vert \leq\left\Vert x\right\Vert \left\Vert
y\right\Vert ,x,y\in G.$

2. $\left\Vert x\right\Vert =\left\Vert x^{-1}\right\Vert ,x\in G.$

3. The sets $B_{r}=\{x\in G|\left\Vert x\right\Vert \leq r\}$ for $0<r<\infty$
are compact.

4. $\left\Vert k_{1}xk_{2}\right\Vert =\left\Vert x\right\Vert $ for
$k_{1},k_{2}\in K,x\in G$.

Any function that satisfies these properties will be called a norm.

Using these properties one has

5. If $(\pi,H)$ is a Banach representation of $G$ with operator norm
$\left\Vert ...\right\Vert _{H}$ then there exist $C>0$ and $r>0$ such that
$\left\Vert \pi(x)\right\Vert _{H}\leq C\left\Vert x\right\Vert ^{r},x\in G$
(see {\cite{RRGI}} 2.A.2. p. 71).

The norm we just constructed will be called the standard norm.

We note that for $1.$ implies that $\left\Vert e\right\Vert \geq1$($e$ the
identity element in $G$) and so
\[
1\leq\left\Vert e\right\Vert =\left\Vert gg^{-1}\right\Vert \leq\left\Vert
g\right\Vert ^{2}%
\]
thus $\left\Vert g\right\Vert \geq1$. Also note

\begin{lemma}
\label{basic-inequality}There exists a constant $C>0$ such that if $a\in
A_{o}$ then
\[
C\left\Vert \log a\right\Vert \leq\log\left\Vert a\right\Vert \leq\left\Vert
\log a\right\Vert .
\]

\end{lemma}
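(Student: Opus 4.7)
The plan is to diagonalize the $p$-action of $A_o$ on $\mathbb{R}^{2n}$. Since the image $G_1\subset GL(2n,\mathbb{R})$ is $\theta$-stable and $\theta H=-H$ for $H\in\mathfrak{a}_o$, each $dp(H)$ is symmetric in the chosen embedding, and these matrices commute as $H$ varies. Hence they admit a simultaneous orthonormal eigenbasis for the inner product underlying the Hilbert--Schmidt norm. This produces linear functionals $\mu_1,\ldots,\mu_{2n}\in\mathfrak{a}_o^{\ast}$ (the weights of $p$, with multiplicity) such that, writing $H=\log a$,
\[
dp(H)=\mathrm{diag}(\mu_1(H),\ldots,\mu_{2n}(H)),\qquad p(a)=\mathrm{diag}(e^{\mu_1(H)},\ldots,e^{\mu_{2n}(H)}),
\]
and therefore
\[
\|\log a\|^2=\sum_i\mu_i(H)^2,\qquad \|a\|^2=\sum_i e^{2\mu_i(H)}.
\]

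The doubling step used to define $p$ makes the multiset $\{\mu_i\}$ invariant under $\mu\mapsto-\mu$, so $M(H):=\max_i\mu_i(H)=\max_i|\mu_i(H)|$. Because $p$ is faithful on $A_o$ the weights span $\mathfrak{a}_o^{\ast}$, so $M$ is a norm on the finite-dimensional space $\mathfrak{a}_o$; equivalence of norms yields $c_1>0$ with $c_1\|H\|\leq M(H)$. In the other direction $M(H)\leq\|H\|$ follows at once from $M(H)^2\leq\sum_i\mu_i(H)^2=\|H\|^2$.

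Given these identifications the two inequalities are immediate. For the lower bound, retain only the largest summand in the expression for $\|a\|^2$ to get $\|a\|^2\geq e^{2M(\log a)}$, hence $\log\|a\|\geq M(\log a)\geq c_1\|\log a\|$, and we take $C=c_1$. For the upper bound, dominate each of the $2n$ summands by $e^{2M(\log a)}$ to obtain $\|a\|^2\leq 2n\cdot e^{2M(\log a)}$, so $\log\|a\|\leq M(\log a)+\tfrac12\log(2n)\leq\|\log a\|+\tfrac12\log(2n)$; the stated inequality without the $\tfrac12\log(2n)$ correction is a purely cosmetic normalization (it is exact, for example, once one passes to the ``standard norm'' introduced just below the lemma, for which $\|e\|=1$).

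The main potential obstacle is verifying the simultaneous orthonormal diagonalization of $dp(\mathfrak{a}_o)$, but this is automatic from the $\theta$-stability of the embedding: each $dp(H)$ is symmetric with respect to the underlying inner product, and the commuting family of such matrices is simultaneously orthogonally diagonalizable. All other steps are standard bookkeeping with the weight-space decomposition.
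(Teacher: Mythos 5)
Your argument is correct, and the key step for the lower bound is genuinely different from the paper's. The paper does not diagonalize the weights of $\mathfrak{a}_{o}$: it only uses that $\mathrm{tr}\,dp(H)=0$ (a consequence of the doubling) to prove the elementary operator-norm inequality $|e^{h}|\geq e^{|h|/(2n-1)}$ for traceless symmetric $h$ --- if the eigenvalue of largest modulus is negative, its absolute value equals the sum of the remaining eigenvalues and is therefore at most $(2n-1)$ times the largest eigenvalue --- and then converts between the operator and Hilbert--Schmidt norms to obtain the explicit constant $C=1/(\sqrt{2n}(2n-1))$. You instead exploit the full symmetry of the weight multiset under $\mu\mapsto-\mu$, which gives $\max_{i}\mu_{i}(H)=\max_{i}|\mu_{i}(H)|$ on the nose, and then appeal to equivalence of norms on $\mathfrak{a}_{o}$ (correctly justified by injectivity of $dp$ on $\mathfrak{a}_{o}$). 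Your route is cleaner and in fact yields a better constant if you observe that $\max_{i}|\mu_{i}(H)|$ is exactly the operator norm of the symmetric matrix $dp(H)$, hence at least $\left\Vert H\right\Vert /\sqrt{2n}$; the abstract norm-equivalence step is then unnecessary. On the upper bound you are more careful than the paper: as literally stated the inequality $\log\left\Vert a\right\Vert \leq\left\Vert \log a\right\Vert$ fails at $a=e$, since $\left\Vert I\right\Vert =\sqrt{2n}>1$ for the Hilbert--Schmidt norm (the paper's claim $\left\Vert \exp H\right\Vert \leq e^{\left\Vert H\right\Vert }$ has the same defect at $H=0$), and your additive $\tfrac{1}{2}\log(2n)$ is the honest correction; it is harmless in every application, which only compares $(1+\log\left\Vert a\right\Vert )^{d}$ with $(1+\left\Vert \log a\right\Vert )^{d}$ up to constants. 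The one inaccuracy is your parenthetical: the standard norm also satisfies $\left\Vert I\right\Vert >1$, so passing to it does not remove the correction either.
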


If $P$ is a parabolic subgroup of $G$ and if $\mathfrak{a}_{P}$ is above then
we define (as usual) $\rho_{P}(H)=\frac{1}{2}\mathrm{tr}(ad(H)_{|Lie(N_{P})})$
for $H\in\mathfrak{a}_{P}$. We will set $\rho=\rho_{P_{o}}$. If $\lambda
\in\mathfrak{a}_{o}^{\ast}$ then we set $a^{\lambda}=e^{\lambda(\log a)}$.

Let $\Phi(P,A_{P})$ denote the weights of $\mathfrak{a}_{p}$ acting on
$Lie(N_{P})$. Set
\[
\mathfrak{a}_{P}^{+}=\{H\in\mathfrak{a}_{P}|\alpha(H)>0,\alpha\in\Phi
(P,A_{P})\}.
\]

\begin{lemma}
\label{rho-estimate}Assume that $G$ has compact center then if $\left\Vert
...\right\Vert $ is the standard norm on $G$ and $H\in\overline{\mathfrak{a}%
_{o}^{+}}$ then $\left\Vert \exp H\right\Vert =e^{\rho_{o}(H)}$ with
\[
\rho_{o}(H)=\frac{1}{2}\mathrm{tr}ad(H)_{|Lie(N_{o})}%
\]

\end{lemma}

Let $P$ be a parabolic subgroup of $G$ and let $(\sigma,H^{\sigma})$ be a
Hilbert representation of $^{o}M_{P}$ such that the underlying ($Lie(^{o}%
M_{P}),{}^{o}M_{P}\cap K)$ is admissible and finitely generated and
$(\sigma,H^{\sigma})$ is unitary as a representation of $^{o}M_{P}\cap K$ and
let $\nu\in\left(  \mathfrak{a}_{P}^{\ast}\right)  _{\mathbb{C}}$. We form the
smooth induced representation $I_{P,\sigma,\nu}^{\infty}$ as follows: The
underlying Fr\'{e}chet space is
\[
I_{\sigma}^{\infty}=\{f:K\rightarrow H_{\sigma}^{\infty}|f\text{ is }%
C^{\infty}\text{ and }f(mk)=\sigma(m)f(k),m\in{}^{o}M_{P}\cap K,k\in K\}.
\]
If $f\in I_{\sigma}^{\infty}$ then we define
\[
\left(  \pi_{P,\sigma,\nu}(g)f\right)  (k)={}_{P}f_{\nu}(kg)
\]
with
\[
_{P}f_{\nu}(namk)=a^{\nu+\rho_{P}}\sigma(m)f(k)
\]
if $n\in N_{P},a\in A_{P},m\in{}^{o}M_{P}$ and $k\in K$, $\rho_{P}(h)=\frac
{1}{2}tradh_{|_{Lie(N_{p})}}$and
\[
\left\langle f_{1},f_{2}\right\rangle =\int_{K}\left\langle f_{1}%
(k),f_{2}(k)\right\rangle _{\sigma}dk.
\]
The Hilbert space $I_{\sigma}$ is the completion of $I_{\sigma}^{\infty}$ with
respect to this inner product and the operators $\pi_{P,\sigma,\nu}(g)$ extend
to yield a Hilbert representation of $G$.

A fundamental inequality of Harish-Chandra (c.f. \cite{RRGI}, Lemma 4.A.2.3
(2) with the misprint $\left\Vert \log n\right\Vert $ replaced by
$\log\left\Vert n\right\Vert $, see also Lemma \ref{HCinequality} which is
part (1) of the Lemma) implies

\begin{lemma}
\label{HCLemma}There exist $C_{1},C_{2}>0$ such that
\[
C_{1}(1+\log\left\Vert n\right\Vert )\leq(1+\rho_{P_{o}}(\log a_{\bar{P}_{o}%
}(n))\leq C_{2}(1+\log\left\Vert n\right\Vert )
\]
for all $n\in N_{o}$.
\end{lemma}

\section{The Harish-Chandra Schwartz Space}

We keep the notation of the previous section. The purpose of this one is to
give a tour of Harish-Chandra's most profound results involving the discrete
series. We follow the discussion in \cite{RRGI} Chapter 7.

First we recall the definition of the Harish-Chandra Schwartz space. If $f\in
C^{\infty}(G),\,k\in\mathbb{R}_{\mathbb{\geq}}=[0,\infty),x,y\in U(Lie(G))$
then set
\[
p_{k,x,y}(f)=\sup_{g\in G}\left(  (1+\log\left\Vert g\right\Vert )^{k}%
\Xi(g)^{-1}\left\vert R_{y}L_{x}f(g)\right\vert \right)  .
\]
Here $R_{y}$ is the right regular action so $R_{y}f=yf$ as a left invariant
differential operator and $L_{x}$ is the left regular action (so a right
invariant differential operator) and $\Xi$ is Harish-Chandra's basic spherical
function which in particular is $K$ bi-invariant. The key facts we need about
$\Xi$ are in \cite{RRGI}, Section 4.5.

1. If $g\in G,$write $g=n(g)a(g)k(g)$ with $n(g)\in N_{o},a(g)\in A_{o}$ and
$k(g)\in K$. Then $\Xi(g)=\int_{K}a(kg)^{\rho_{o}}dk$ (the integration is over
normalized invariant measure on $K$).

2. There exist $C,d>0$ such that if $h\in\overline{\mathfrak{a}_{o}^{+}}$
then
\[
e^{-\rho(h)}\leq\Xi(\exp(h))\leq Ce^{-\rho(h)}(1+\left\Vert h\right\Vert
)^{d}.
\]

3. $\Xi(g)>0$ and there exists $r>0$ such that ($dg$ is a Haar measure on $G)
$
\[
\int_{G}(1+\log\left\Vert g\right\Vert )^{-r}\Xi(g)^{2}dg<\infty.
\]

4. If $P$ is a parabolic subgroup of $G$ and $N=N_{P}$ then there exists $r$
such that ($dn$ is a Haar measure on $N)$
\[
\int_{N}(1+\log\left\Vert n\right\Vert )^{-r}\Xi(n)dn<\infty.
\]

5. Since it is a $K$ bi-invariant spherical function it satisfies
\[
\int_{K}\Xi(xky)dk=\Xi(x)\Xi(y)
\]
(indeed this is one of the definitions of zonal spherical function).

6. $\Xi(xy)=\int_{K}a(kx^{-1})^{\rho}a(ky)^{\rho}dk$ for $x,y\in G$ (c.f.
\cite{RRGI} (2) p. 147).

7. Assume that $G$ has compact center and $\left\Vert ...\right\Vert $ is the
standard norm (see Section 2). Then Lemma \ref{rho-estimate} implies that
there exist $C_{1},C_{2},d>0$ such that
\[
C_{1}\left\Vert g\right\Vert ^{-1}\leq\Xi(g)\leq C_{2}\left\Vert g\right\Vert
^{-1}(1+\log\left\Vert g\right\Vert )^{d}%
\]
since $G=K\exp(\overline{\mathfrak{a}_{o}^{+}})K$.

Using 6. one sees easily that if $Y$ is a compact subset of $G$ then there
exist positive constants $M_{Y},L_{Y}$ such that if $y\in Y$ then
\[
L_{Y}\Xi(x)\leq\Xi(xy)\leq M_{Y}\Xi(x).
\]
Indeed, we can choose $L_{Y},M_{Y}$ such that $0<L_{Y}<a(ky)^{\rho}\leq M_{F}
$ for $y\in Y$ since $KY$ is compact and $a(x)^{\rho}>0$. Thus proves
\[
L_{Y}\int_{K}a(kx^{-1})^{\rho}dk\leq\Xi(xy)\leq M_{Y}\int_{K}a(kx^{-1})^{\rho
}dk=M_{Y}\Xi(x).
\]

We will say that $f\in C^{\infty}(G)$ satisfies the weak inequality if there
exist $C,d$ such that
\[
\left\vert f(g)\right\vert \leq C\Xi(g)(1+\log\left\Vert g\right\Vert )^{d}.
\]

The Harish-Chandra Schwartz space, $\mathcal{C}(G)$, is the subspace of
$C^{\infty}(G)$ consisting of those functions $f$ such that
\[
p_{k,x,y}(f)<\infty
\]
for all $x,y\in U(Lie(G))$ and all $k\geq0$ endowed with the topology given by
the semi-norms $p_{k,x,y}$. With this topology $\mathcal{C}(G)$ is a
Fr\'{e}chet space and an algebra under convolution.

In this context the basis of Harish-Chandra's \textquotedblleft philosophy of
cusp forms\textquotedblright\ is encapsulated in the following sequence of
results. Let $P$ be a standard parabolic subgroup of $G$. We define for $f\in
C(G)$
\[
f^{P}(m)=a_{P}(m)^{-\rho_{P}}\int_{N_{P}}f(nm)dn
\]
for $m\in M_{P}$ such that the integral converges. Until further notice all of
the coming results in this section are due to Harish-Chandra. We will give
references to \cite{RRGI}.

\begin{theorem}
\label{cusp-to-cusp}(c.f. \cite{RRGI}Theorem 7.2.1) If $f\in\mathcal{C}(G)$
then the integral defining $f^{P}$ converges absolutely and uniformly in
compacta of $M$ and defines an element of $\mathcal{C}(M)$. Furthermore, the
map $f\mapsto f^{P}$ is continuous from $\mathcal{C}(G)$ to $\mathcal{C}(M)$.
\end{theorem}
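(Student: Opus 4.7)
The plan is to reduce the theorem to a single analytic estimate of Harish--Chandra type on the $N_{P}$--integral of $\Xi$, after which absolute convergence, smoothness, Schwartz decay, and continuity all fall out by chasing the seminorms. Throughout, write $m = {}^{o}\!m\, a(m)$ with ${}^{o}\!m \in {}^{o}\!M_{P}$ and $a(m)\in A_{P}$, and denote by $\Xi^{M}$ the Harish--Chandra spherical function of $M_{P}$ and by $p^{M}_{k,x,y}$ the seminorms defining $\mathcal{C}(M_{P})$.

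First I would record the obvious pointwise bound that for any $k\ge 0$ and $f\in\mathcal{C}(G)$ one has $|f(nm)|\le p_{k,1,1}(f)\,\Xi(nm)(1+\log\|nm\|)^{-k}$, and observe, using property~4 of $\Xi$ together with the submultiplicativity of $\|\cdot\|$ and the estimate $\Xi(xy)\le M_{Y}\Xi(x)$ proved in the body of the excerpt, that for each compact $Y\subset M_{P}$ and each sufficiently large $k$ the integral $\int_{N_{P}}\Xi(nm)(1+\log\|nm\|)^{-k}dn$ converges uniformly for $m\in Y$. This handles absolute and locally uniform convergence. Smoothness of $f^{P}$ on $M_{P}$ then follows by differentiating under the integral: for any $x,y\in U(\mathrm{Lie}(M_{P}))$ the formal identity
\[
R_{y}L_{x}\!\left(\int_{N_{P}}f(n\,\cdot\,)dn\right)(m)=\int_{N_{P}}R_{y}L_{x_{m}}f(nm)\,dn
\]
is legitimate because the integrand on the right is again bounded by $p_{k',x,y_{0}}(f)\,\Xi(nm)(1+\log\|nm\|)^{-k'}$ for a suitable reindexing; the left translations are moved past the integral by the change of variables $n\mapsto {}^{o}\!m\, n\,({}^{o}\!m)^{-1}$, which is measure-preserving (up to a factor coming from $\mathrm{Ad}(a(m))$ on $\mathrm{Lie}(N_{P})$ that is absorbed into the normalization $a(m)^{-\rho_{P}}$, yielding the modular character).

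The crux is then the following Harish--Chandra type estimate: for every integer $k\ge 0$ there exist $C>0$ and an integer $k'\ge 0$ such that for all $m\in M_{P}$,
\[
a(m)^{-\rho_{P}}\int_{N_{P}}\Xi(nm)\,(1+\log\|nm\|)^{-k'}\,dn\;\le\; C\,\Xi^{M}({}^{o}\!m)\,(1+\log\|m\|)^{-k}.
\]
Granting this, combining with the pointwise bound on $R_{y}L_{x}f$ yields
\[
(1+\log\|m\|)^{k}\,\Xi^{M}({}^{o}\!m)^{-1}\,|R_{y}L_{x}f^{P}(m)|\;\le\; C\,p_{k',x',y'}(f)
\]
for appropriate $x',y'\in U(\mathrm{Lie}(G))$, which simultaneously shows $f^{P}\in\mathcal{C}(M_{P})$ and gives continuity of $f\mapsto f^{P}$ from $\mathcal{C}(G)$ to $\mathcal{C}(M_{P})$ by comparing Fr\'echet seminorms.

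The main obstacle is proving the displayed estimate, since it encodes the genuinely nontrivial content of the theorem. My strategy would be to split $M_{P}$ into the compact part ${}^{o}\!M_{P}$ and the split part $A_{P}$: on the $A_{P}$-variable one exploits the sharp exponential bounds of property~2 of $\Xi$ applied in $G$ and in $M_{P}$ separately, together with the Iwasawa integral formula 6, to write $\Xi(nm)$ in terms of $a(\cdot)^{\rho}$ and to factor out $a(m)^{\rho_{P}}$, producing the canceling factor that matches the normalization $a(m)^{-\rho_{P}}$; the remaining integral is then compared to $\Xi^{M}({}^{o}\!m)$ by recognizing it as an integral defining the zonal spherical function of $M_{P}$, with the polynomial loss $(1+\log\|m\|)^{-k}$ absorbing the difference. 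The integrability property~4 applied to $N_{P}$ in $G$, and its analogue on $M_{P}$, are what make this comparison finite; the polynomial weights $(1+\log\|nm\|)^{-k'}$ are needed because the naked integral $a^{-\rho_{P}}\int_{N_{P}}\Xi(nm)dn$ grows like a power of $(1+\log\|m\|)$, and these weights, together with the Schwartz decay of $f$, are precisely what absorb that growth.
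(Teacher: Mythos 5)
Your proposal follows essentially the same route as the paper: the theorem is reduced to exactly the Harish--Chandra estimate you display as the ``crux,'' which is Proposition~\ref{basic} in the text (cited from \cite{RRGI}, p.~233), and the convergence, smoothness, Schwartz decay, and continuity statements are then obtained by the same seminorm chase. One small caution in your sketch of the estimate itself: ${}^{o}M_{P}$ is in general a noncompact reductive group (it merely lacks a split central factor), so the ``compact part / split part'' dichotomy should be phrased as the decomposition $M_{P}={}^{o}M_{P}A_{P}$ rather than as compactness of ${}^{o}M_{P}$.
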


This follows from the following result of Harish-Chandra (c.f. \cite{RRGI} p.
233.). Set $\Xi_{^{0}M_{P}}$ equal to the analogue of $\Xi$ for $^{0}M_{P} $
and $K\cap M_{P}$ then

\begin{proposition}
\label{basic}If $u,v>0$ are given then there exists $r>0$ and $C_{u,v}$ such
that if $m\in{}^{0}M_{P},a\in A_{P}$
\[
a^{-\rho_{P}}\int_{N_{P}}\Xi(nam)(1+\log\left\Vert nam\right\Vert )^{-r}dn\leq
\]
\[
\Xi_{^{0}M_{P}}(m)(1+\log\left\Vert m\right\Vert )^{-u}(1+\log\left\Vert
a\right\Vert )^{-v}.
\]

\end{proposition}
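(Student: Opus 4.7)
The plan is to combine Harish-Chandra's integral representation of $\Xi$ (property 1 of the current section) with submultiplicativity of the norm and the parabolic factorization $M_P=A_P\cdot{}^{0}M_P$. First I would use that $A_P$ is central in $M_P$ and that $M_P$ normalizes $N_P$ to rewrite $nam=man'$ with $n'=(ma)^{-1}n(ma)$; the substitution $n\mapsto n'$ carries a Jacobian $a^{2\rho_P}$ (since $\det Ad(a)|_{Lie(N_P)}=a^{2\rho_P}$ and $m\in{}^{0}M_P$ preserves Haar measure on $N_P$), so
\[
a^{-\rho_P}\int_{N_P}\Xi(nam)(1+\log\|nam\|)^{-r}\,dn = a^{\rho_P}\int_{N_P}\Xi(man)(1+\log\|man\|)^{-r}\,dn.
\]
Next, by property 1 and the splitting $\mathfrak{a}_o=\mathfrak{a}_P\oplus(\mathfrak{a}_o\cap Lie({}^{0}M_P))$ with $\rho=\rho_P+\rho_{{}^{0}M_P}$, the Iwasawa function $a(kman)^\rho$ factors into an $A_P$-contribution that will balance the $a^{\rho_P}$ prefactor and an ${}^{0}M_P$-Iwasawa contribution which, integrated over $K$ (using right $K\cap M_P$-invariance of $dk$), produces $\Xi_{{}^{0}M_P}(m)$. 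The polynomial weight $(1+\log\|man\|)^{-r}$ is exactly what is needed to force the resulting $N_P$-integral to converge absolutely, via property 4.

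Second, to produce the $(1+\log\|a\|)^{-v}(1+\log\|m\|)^{-u}$ decay, I would split $N_P$ into $\{\|n\|\le\|am\|^{1/2}\}$ and its complement. On the first region, submultiplicativity gives $\|nam\|\ge\|am\|/\|n\|\ge\|am\|^{1/2}$, hence
\[
(1+\log\|nam\|)^{-r}\le C(1+\log\|am\|)^{-r/2}.
\]
Combined with a polynomial lower bound $\log\|am\|\ge c(\log\|a\|+\log\|m\|)-C'$, which holds because $A_P\cap{}^{0}M_P$ is compact and so $(a,m)\mapsto am:A_P\times{}^{0}M_P\to M_P$ is proper, this absorbs the desired factors whenever $r$ is taken $\ge 2(u+v)/c$ (plus a margin to offset the polynomial corrections from the base identity). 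On the complementary region, $\log\|n\|\ge\tfrac12\log\|am\|$, and the weight $(1+\log\|nam\|)^{-r}$ may be traded for $(1+\log\|n\|)^{-r'}$ via the two-sided estimate $\log\|n\|/\log\|nam\|=O(1)$; the absolute convergence furnished by property 4 then supplies the required decay.

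The main obstacle is the clean base identity: tracking how the full minimal Iwasawa decomposition of $G$ refines the parabolic Langlands decomposition of $P$ so as to produce exactly $\Xi_{{}^{0}M_P}(m)$ after $K$-integration. This requires in an essential way the centrality of $A_P$ in $M_P$, the decomposition $\rho=\rho_P+\rho_{{}^{0}M_P}$ on $\mathfrak{a}_o$, and the right $K\cap M_P$-invariance of Haar measure on $K$. A secondary technical point is the polynomial lower bound $\log\|am\|\gtrsim\log\|a\|+\log\|m\|$, which reflects the global properness of the product map and follows from the structure theory of real reductive groups together with submultiplicativity of the norm on $M_P$.
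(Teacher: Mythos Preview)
The paper does not supply a proof of this proposition; it is quoted from Harish-Chandra (cf.\ \cite{RRGI}, p.~233) and used as a black box. So there is no ``paper's proof'' to compare against, only the standard Harish-Chandra argument.

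Your sketch has a genuine gap at its center. The ``base identity'' you invoke in step~2---that after writing $\Xi(man)=\int_K a(kman)^{\rho}\,dk$ the integrand factors into an $A_P$-piece cancelling $a^{\rho_P}$ and a piece whose $K$-average is $\Xi_{{}^{0}M_P}(m)$---is exactly the content of the proposition (in its unweighted form), and in fact that unweighted integral over $N_P$ diverges. The element $kman$ has no reason to decompose compatibly with the splitting $\rho=\rho_P+\rho_{{}^{0}M_P}$: for generic $k\in K$ the minimal Iwasawa $a$-part of $kman$ mixes the $A_P$- and $A_o\cap{}^{0}M_P$-directions. The way Harish-Chandra actually produces $\Xi_{{}^{0}M_P}(m)$ is \emph{not} through the minimal Iwasawa map $a(\,\cdot\,)$ but through the Langlands decomposition relative to $\bar P$: one writes $kg=\bar n\, a_{\bar P}(kg)\, m_{\bar P}(kg)\, k'$ and uses the identity $\Xi(g)=\int_K a_{\bar P}(kg)^{-\rho_P}\,\Xi_{{}^{0}M_P}(m_{\bar P}(kg))\,dk$ together with estimates on $a_{\bar P}(kn)$ for $n\in N_P$. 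Without that ingredient your factorization does not go through.

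There is a second gap in your splitting argument. On the region $\|n\|>\|am\|^{1/2}$ you claim $\log\|n\|/\log\|man\|=O(1)$, but the only inequality available from submultiplicativity is $\|man\|\ge \|n\|/\|am\|$, which on this region gives merely $\|man\|\ge \|n\|^{-1}$, useless for trading the weight. The standard route avoids any dyadic splitting of $N_P$: one proves directly, from the matrix realization of the norm, uniform inequalities of the type $1+\log\|nam\|\ge c(1+\log\|n\|)$, $\ge c(1+\log\|a\|)$, $\ge c(1+\log\|m\|)$ (the first two are exactly what is done in the proof of Proposition~\ref{Fourier} in this paper for $P=P_o$; the extension to general $m\in{}^{0}M_P$ uses the Cartan decomposition of $m$). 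One then splits the \emph{exponent} $r$ into three pieces rather than the domain, reserving enough of the $(1+\log\|n\|)^{-r_1}$ factor to make the $N_P$-integral converge via property~4, and using the remaining pieces to produce the decay in $m$ and $a$.
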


If $f^{P}=0$ for all proper parabolic subgroups of $G$ then we call $f$ a cusp
form. Let $Z(Lie(G))$ denote the center of $U(Lie(G))$ then

\begin{theorem}
\label{Z-finite} (c.f. \cite{RRGI},Theorem 7.7.2) If $f\in\mathcal{C}(G)$ is
such that $\dim Z(Lie(G))f<\infty$ then $f$ is a cusp form.
\end{theorem}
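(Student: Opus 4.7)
The aim is to show $f^{P}\equiv 0$ for every proper standard parabolic $P$ of $G$; this suffices since all parabolic subgroups are $K$-conjugate to a standard one. By the preceding theorem, $f^{P}\in\mathcal{C}(M_{P})$. The strategy is to turn $Z(Lie(G))$-finiteness of $f$ into the statement that $f^{P}$ is exponential-polynomial along the central split torus $A_{P}$, and then to use the rapid decay of Schwartz functions to force that exponential-polynomial to vanish. The first step is the Harish-Chandra-type identity
\[
(R_{z}f)^{P}=R_{\mu_{P}(z)}f^{P},\qquad z\in Z(Lie(G)),
\]
where $\mu_{P}:Z(Lie(G))\to Z(Lie(M_{P}))$ is a ring homomorphism obtained from the PBW decomposition $U(Lie(G))=U(Lie(\overline{N}_{P}))\cdot U(Lie(M_{P}))\cdot U(Lie(N_{P}))$: the $U(Lie(N_{P}))_{+}$-part of $z$ integrates to zero under $\int_{N_{P}}$ by integration by parts (legitimate because $f$ is Schwartz), the $U(Lie(\overline{N}_{P}))_{+}$-part is absorbed into the $a^{-\rho_{P}}$ twist using $\mathrm{Ad}(M_{P})$-invariance of $z$, and the surviving $U(Lie(M_{P}))$-piece is $\mathrm{Ad}(M_{P})$-invariant, hence in $Z(Lie(M_{P}))$. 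Consequently $Z(Lie(M_{P}))f^{P}$ is a finite-dimensional quotient of $Z(Lie(G))f$, hence finite-dimensional.

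Now fix $m_{0}\in{}^{o}M_{P}$ and consider $\phi(a):=f^{P}(m_{0}a)$ for $a\in A_{P}$. Since $\mathfrak{a}_{P}$ is central in $Lie(M_{P})$, we have $U((\mathfrak{a}_{P})_{\mathbb{C}})\subseteq Z(Lie(M_{P}))$, so $U((\mathfrak{a}_{P})_{\mathbb{C}})\phi$ is finite-dimensional. Hence $\phi$ is annihilated by a cofinite-codimension ideal of the polynomial algebra $U((\mathfrak{a}_{P})_{\mathbb{C}})$, and the standard theory of constant-coefficient systems on $\mathfrak{a}_{P}$ yields
\[
\phi(a)=\sum_{\lambda\in E}q_{\lambda}(\log a)\,a^{\lambda}
\]
for a finite set $E\subseteq(\mathfrak{a}_{P}^{*})_{\mathbb{C}}$ and polynomials $q_{\lambda}$ on $\mathfrak{a}_{P}$.

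Finally, $f^{P}\in\mathcal{C}(M_{P})$ combined with the elementary bound $\Xi_{M_{P}}(a)\leq C(1+\|\log a\|)^{d}$ for $a\in A_{P}$ (valid because $\rho_{M_{P}}$ vanishes on the central subspace $\mathfrak{a}_{P}$) shows that $\phi$ decays faster than any polynomial in $\|\log a\|$ as $a\to\infty$ in every direction of $\mathfrak{a}_{P}$. Pick a unit vector $H\in\mathfrak{a}_{P}$ such that the complex numbers $\lambda(H)$ for $\lambda\in E$ are pairwise distinct; the restriction $t\mapsto\phi(\exp(tH))$ is a sum of linearly independent exponential-polynomials $q_{\lambda}(tH)e^{t\lambda(H)}$. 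Rapid decay as $t\to+\infty$ forces $\mathrm{Re}\,\lambda(H)\leq 0$ for every $\lambda$ with $q_{\lambda}\neq 0$; rapid decay as $t\to-\infty$ forces $\mathrm{Re}\,\lambda(H)\geq 0$; hence $\mathrm{Re}\,\lambda(H)=0$, and then the polynomial rapid decay forces $q_{\lambda}\equiv 0$. Thus $\phi\equiv 0$, and since $m_{0}$ was arbitrary, $f^{P}=0$. The main obstacle is Step 1: the precise derivation of $(R_{z}f)^{P}=R_{\mu_{P}(z)}f^{P}$, which demands careful bookkeeping of the $\rho_{P}$-twist, the $\mathrm{Ad}(M_{P})$-invariance of $z$, and integration by parts under the $N_{P}$-integral made legitimate by Proposition \ref{basic}; once this is in place, the rest is routine exponential-polynomial analysis.
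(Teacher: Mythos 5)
The paper gives no proof of this statement: it is quoted from \cite{RRGI} (Theorem 7.7.2) as one of Harish-Chandra's results, so the only in-paper point of comparison is the proof of the Whittaker analogue in Section 5, which your argument closely parallels. There, the exponential-polynomial behaviour of $f^{P}$ along $A_{P}$ is extracted from the finitely generated admissible module $V=U(Lie(G))\,\mathrm{Span}_{\mathbb{C}}R_{K}f$ and its quotient $V/Lie(\overline{N}_{P})V$, after first reducing to $K$-finite $f$ via the $K$-type expansion; you instead extract it from the homomorphism $\mu_{P}:Z(Lie(G))\rightarrow Z(Lie(M_{P}))$ and the identity $(R_{z}f)^{P}=R_{\mu_{P}(z)}f^{P}$, which is essentially Harish-Chandra's own route (the constant-term theory of \cite{RRGI}, Section 7.2) and avoids the $K$-finiteness reduction. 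Both versions finish identically: temperedness of $f^{P}\in\mathcal{C}(M_{P})$ forces unitary exponents, and rapid decay of $f^{P}(m_{0}a)$ in all directions of $A_{P}$ kills a nonzero exponential polynomial. So the approach is correct and is in substance the standard one.

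Three points need tightening. First, $Z(Lie(M_{P}))f^{P}$ is not literally a quotient of $Z(Lie(G))f$: your identity only gives finite-dimensionality of $\mu_{P}(Z(Lie(G)))f^{P}$, and since $S(\mathfrak{a}_{P})_{\mathbb{C}}$ is in general not contained in $\mu_{P}(Z(Lie(G)))$ (already for $SL(2,\mathbb{R})$ the image is generated by a quadratic in $H$), you must invoke the standard fact that $Z(Lie(M_{P}))$, or at least $S(\mathfrak{a}_{P})_{\mathbb{C}}$, is a finite module over $\mu_{P}(Z(Lie(G)))$ to obtain the $U((\mathfrak{a}_{P})_{\mathbb{C}})$-finiteness you actually use. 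Second, the $U(Lie(\overline{N}_{P}))_{+}$-terms of $z$ are not ``absorbed into the $\rho_{P}$-twist''; they are absent for a different reason, namely an $\mathrm{ad}\,\mathfrak{a}_{P}$-weight count: a PBW monomial with trivial $U(Lie(N_{P}))$-part but nontrivial $U(Lie(\overline{N}_{P}))_{+}$-part has strictly negative $\mathfrak{a}_{P}$-weight and so cannot occur in the weight-zero element $z$; the $\rho_{P}$-twist only serves to make $\mu_{P}$ land in $Z(Lie(M_{P}))$ as a homomorphism. Third, the one-variable argument on a single generic line $\mathbb{R}H$ yields only $q_{\lambda}(tH)=0$ for all $t$, i.e.\ vanishing of $\phi$ on that one-parameter subgroup, not $q_{\lambda}\equiv0$; to conclude $\phi\equiv0$ on $A_{P}$, note that the generic directions form the complement of finitely many hyperplanes, hence the union of the corresponding lines is dense in $\mathfrak{a}_{P}$, and use continuity of $\phi$. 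All three repairs are routine, and with them the proof is complete.
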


Let $\hat{K}$ be the set of equivalence classes of irreducible continuous
representations of $K$. Let $\gamma\in\hat{K}$ and let $\chi_{\gamma}$ and
$d(\gamma)$ be respectively the character and dimension of any representative
of $\gamma$. Defining for $f\in C(G)$
\[
E_{\gamma}(f)(g)=f_{\gamma}(g)=d(\gamma)\int_{K}f(gk)\chi_{\gamma}(k^{-1})dk
\]
then under the right regular action of $K$ on $C(G)$ the function $f_{\gamma}
$ transforms as a representative of $\gamma$. \ The following result is easily
derivable from \cite{RRGI} Corollary 3.4.7 and is used in the proof of the
preceding theorem.

\begin{theorem}
\label{K-Fourier}If $f\in\mathcal{C}(G)$ then $f_{\gamma}\in\mathcal{C}(G) $
and the series
\[
\sum_{\gamma\in\hat{K}}f_{\gamma}%
\]
converges to $f$ in $\mathcal{C}(G)$. Furthermore, if $G$ has compact center
and $\dim Z(Lie(G))f<\infty$ then the $(Lie(G),K)$ modules
\[
U(Lie(G)_{\mathbb{C}})Span_{\mathbb{C}}R_{K}f_{\gamma}%
\]
are admissible and are finite direct sum of the underlying $(Lie(G),K)$
modules of irreducible square integrable representations.
\end{theorem}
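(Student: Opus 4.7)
The proof naturally splits into three claims, and my plan is to address them in order.

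\textbf{Part 1: $f_\gamma \in \mathcal{C}(G)$.} The operator $E_\gamma$ is right convolution over the compact group $K$: $f_\gamma(g) = d(\gamma) \int_K f(gk)\chi_\gamma(k^{-1})\,dk$. Property 4 of $\|\cdot\|$ gives $\|gk\| = \|g\|$, so the weight $(1+\log\|g\|)^k$ is unaffected, while property 6 (applied to the compact $Y = K$) yields $\Xi(gk) \leq M_K \Xi(g)$ uniformly in $k \in K$. The left regular action $L_x$ commutes with right translation, giving $L_x f_\gamma(g) = d(\gamma)\int_K (L_x f)(gk)\chi_\gamma(k^{-1})\,dk$. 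For a right regular action $R_Y$, one gets $R_Y f_\gamma(g) = d(\gamma)\int_K (R_{\mathrm{Ad}(k^{-1})Y} f)(gk)\chi_\gamma(k^{-1})\,dk$; since $\mathrm{Ad}(K)$ preserves the standard filtration of $U(\mathrm{Lie}(G)_{\mathbb{C}})$, $R_y f_\gamma(g)$ expands as a finite integral over $K$ of $R_{y_i} f$ against continuous (hence bounded) coefficient functions in $k$. Using $|\chi_\gamma| \leq d(\gamma)$ yields $p_{k,x,y}(f_\gamma) \leq C_{\gamma,y}\sum_i p_{k,x,y_i}(f)$, whence $f_\gamma \in \mathcal{C}(G)$ and $E_\gamma$ is continuous.

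\textbf{Part 2: $\sum_\gamma f_\gamma \to f$ in $\mathcal{C}(G)$.} The tool is the Casimir $C_K$ of $\mathrm{Lie}(K)$: it is $\mathrm{Ad}(K)$-invariant, so $R_{C_K}$ commutes with every $R_k$ and hence with $E_\gamma$, and it acts on the $\gamma$-isotypic subspace by a scalar $c(\gamma) \geq 0$ that grows quadratically in $|\gamma|$. Therefore, for every $N \geq 0$,
\[
f_\gamma = (1+c(\gamma))^{-N}\bigl((1+C_K)^N f\bigr)_\gamma .
\]
Combining with the estimate from Part 1 and the Weyl polynomial bound on $d(\gamma)$, the bounds $p_{k,x,y}(f_\gamma) \leq C_y\,d(\gamma)(1+c(\gamma))^{-N}\sum_i p_{k,x,y_i}((1+C_K)^N f)$ become absolutely summable over $\gamma \in \hat{K}$ once $N$ is chosen large (since $\#\{\gamma : c(\gamma) \leq R\}$ also grows polynomially). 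Thus the series converges absolutely in the Fr\'echet topology. Pointwise convergence to $f$ comes from the Peter--Weyl decomposition of the smooth function $k \mapsto f(gk) \in C^\infty(K)$ evaluated at $k = e$, identifying the limit as $f$.

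\textbf{Part 3: admissibility and discrete series decomposition.} Assume $G$ has compact center and $\dim Z(\mathrm{Lie}(G)) f < \infty$. Because $Z(\mathrm{Lie}(G))$ is central in $U(\mathrm{Lie}(G))$ it commutes with all $R_k$, hence with $E_\gamma$, so $Z(\mathrm{Lie}(G)) f_\gamma = E_\gamma(Z(\mathrm{Lie}(G))f)$ is finite dimensional; $f_\gamma$ is also $K$-finite of type $\gamma$. Invoking \cite{RRGI} Corollary 3.4.7 for the $K$-finite, $Z(\mathrm{Lie}(G))$-finite generator $f_\gamma$ yields that $V_\gamma := U(\mathrm{Lie}(G)_\mathbb{C})\mathrm{Span}_{\mathbb{C}} R_K f_\gamma$ is admissible and of finite length. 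By Theorem \ref{Z-finite}, $f$ is a cusp form; I would then verify that the cuspidal subspace ${}^o\mathcal{C}(G) \subset \mathcal{C}(G)$ is $R_G$-invariant (and in particular $R_K$-invariant) so that each $f_\gamma$, and hence all of $V_\gamma$, lies in ${}^o\mathcal{C}(G) \subset L^2(G)$. The closure of $V_\gamma$ in $L^2(G)$ is then a $G$-invariant subspace whose underlying Harish-Chandra module is $V_\gamma$; each irreducible constituent embeds unitarily into $L^2(G)$, which (since $G$ has compact center) forces each to be the Harish-Chandra module of an irreducible square-integrable representation.

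The main obstacle in this plan is the clean verification that ${}^o\mathcal{C}(G)$ is $R_K$-invariant: right translation by $k \in K$ conjugates a parabolic $P$ to $kPk^{-1}$, so one must use that cuspidality is imposed with respect to \emph{all} proper parabolics (not just standard ones) and track the Jacobian of the conjugation $\mathrm{Ad}(k)$ on $N_P$, which is $\pm 1$ since $\mathrm{Ad}(K)$ acts by isometries on $\mathrm{Lie}(G)$. Once this invariance is in hand, Parts 1--2 are a matter of bookkeeping and Part 3 reduces essentially to \cite{RRGI} Corollary 3.4.7 as advertised.
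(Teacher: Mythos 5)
Your argument is correct, and there is little in the paper to compare it against: the text offers no proof of this statement, saying only that it is ``easily derivable from \cite{RRGI} Corollary 3.4.7.'' Parts 1 and 2 are the standard route one would extract from that reference: right $K$--invariance of $\left\Vert \cdot\right\Vert $ and of $\Xi$, $\mathrm{Ad}(K)$--stability of the filtration of $U(Lie(G))$ to handle the right derivatives, and the $K$--Casimir identity $f_{\gamma}=(1+c(\gamma))^{-N}\left(  (1+C_{K})^{N}f\right)  _{\gamma}$ combined with polynomial bounds on $d(\gamma)$ and on the counting function of $\hat{K}$. Part 3 is the intended reduction to Harish-Chandra's admissibility and finite-length theorems for a $K$--finite, $Z(Lie(G))$--finite generator, followed by the classical fact that an irreducible closed invariant subspace of the right regular representation on $L^{2}(G)$ is square integrable. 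One substantive remark: the ``main obstacle'' you flag at the end is not an obstacle for your own argument. You never need $V_{\gamma}\subset{}^{o}\mathcal{C}(G)$; you only need $V_{\gamma}\subset L^{2}(G)$, and that is immediate from $V_{\gamma}\subset\mathcal{C}(G)$ (stability of $\mathcal{C}(G)$ under $R_{K}$ and under right differentiation, plus property 3 of $\Xi$), so the detour through cusp forms and the worry about non-standard parabolics can simply be deleted. The only other quibble is cosmetic: the constant produced in Part 1 is of size $d(\gamma)^{2}$ rather than $d(\gamma)$, which changes nothing since both are polynomially bounded in terms of the Casimir eigenvalue.
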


The last part of the above theorem needs an explanation. An irreducible
unitary representation of $G$, $(\pi,H)$, is said to be square integrable if
the matrix coefficients
\[
g\mapsto\left\langle \pi(g)v,w\right\rangle
\]
are square integrable for all $v,w\in H$. In fact, all one needs is one
non-zero square integrable matrix coefficient.

\begin{theorem}
(c.f. Theorem 5.5.4 \cite{RRGI}) Assume that $G$ has compact center. If
$(\pi,H)$ is an irreducible square integrable representation of $G$ and if
$v,w$ are $K$--finite elements of $H$ then the corresponding matrix
coefficient, $f(g)=\left\langle \pi(g)v,w\right\rangle $, is in $\mathcal{C}%
(G)$ and $\dim Z_{G}(Lie(G))f=1$.
\end{theorem}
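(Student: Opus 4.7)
The plan is to separately handle the Schwartz estimate $f \in \mathcal{C}(G)$ and the eigenfunction property $\dim Z_G(Lie(G))f = 1$, with the latter being soft and the former supplying the real content.

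For the $Z_G$-finiteness, any $z \in Z_G(Lie(G))$ is $Ad(G)$-invariant, hence $\pi(z)$ commutes with $\pi(G)$. By Schur's lemma applied on the admissible $(Lie(G),K)$-module of $K$-finite vectors, $\pi(z)$ acts by a scalar $\chi_\pi(z)$. Therefore $(R_z f)(g) = \langle \pi(g)\pi(z)v, w\rangle = \chi_\pi(z)f(g)$, so $Z_G(Lie(G))f = \mathbb{C}f$, which has dimension one (replacing $w$ by a $K$-finite vector with $f \ne 0$ if necessary, using irreducibility).

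For the Schwartz property, I would first reduce to a single matrix-coefficient estimate. One has $R_y f(g) = \langle \pi(g)\pi(y)v, w\rangle$, and $L_x f(g) = \langle \pi(g)v, \pi(x')w\rangle$ for the suitable anti-involution image $x' \in U(Lie(G))$. By admissibility, $\pi(y)v$ and $\pi(x')w$ remain $K$-finite, so $R_yL_x f$ is again a $K$-finite matrix coefficient of the same representation $\pi$. It therefore suffices to show that any such $h$ satisfies $|h(g)| \le C_k\, \Xi(g)(1+\log\|g\|)^{-k}$ for every $k \ge 0$.

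This is Harish-Chandra's fundamental estimate for the discrete series. Since $h$ is left- and right-$K$-finite and annihilated by an ideal of finite codimension in $Z(Lie(G))$ through the infinitesimal character, Harish-Chandra's analysis of the radial differential equations yields a convergent asymptotic expansion on $A_o^+$,
\[
h(k_1 a k_2) \sim \sum_\lambda a^\lambda\, p_\lambda(\log a;\, k_1, k_2),
\]
where $\lambda$ ranges over a finite set of exponents determined by $\chi_\pi$ and the $p_\lambda$ are polynomials in $\log a$ with smooth $K \times K$-coefficients. The square-integrability of $\pi$ is equivalent to the strict bound $\mathrm{Re}(\lambda + \rho)(H) < 0$ for every leading exponent $\lambda$ and every $H$ in the open positive chamber. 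Combined with the two-sided bound $e^{\rho(H)} \le \Xi(\exp H)^{-1} \le C e^{\rho(H)}(1+\|H\|)^{-d}$ from property 2 of $\Xi$, this forces $h(g)/\Xi(g)$ to decay exponentially in $\|H\|$ on $A_o^+$, which dominates any polynomial in $\log\|a\|$. The Cartan decomposition $G = K\exp(\overline{\mathfrak{a}_o^+})K$ together with bi-$K$-invariance of $\Xi$ and uniform continuity of $p_\lambda$ in $(k_1,k_2)$ then transfers the bound to all of $G$.

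The \emph{main obstacle} is the sharp characterization of the discrete series by its leading exponents, namely passing from $L^2$-integrability of matrix coefficients to strict negativity of $\mathrm{Re}(\lambda + \rho)$ on the positive chamber. This is the heart of Harish-Chandra's asymptotic theory and rests on controlling the formal expansion of $Z(Lie(G))$-eigenfunctions together with a convergence argument on $A_o^+$. Once this estimate is available, the Schwartz bounds for all derivatives $R_yL_x f$ follow mechanically, because these derivatives are again $K$-finite matrix coefficients of the same discrete series representation $\pi$ with the same infinitesimal character.
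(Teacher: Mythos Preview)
Your outline is correct. Note, however, that the paper does not supply its own proof of this theorem: it is stated in Section~3 as one of Harish-Chandra's results and referred out to \cite{RRGI}. Your argument---Schur for the infinitesimal character, reduction of the Schwartz bound to the single estimate $|h(g)|\le C_k\,\Xi(g)(1+\log\|g\|)^{-k}$ for $K$-finite matrix coefficients via $R_yL_xf=c_{\pi(y)v,\pi(x')w}$, then Harish-Chandra's asymptotic expansion together with the strict inequality $\mathrm{Re}(\lambda+\rho)(H)<0$ for leading exponents of square-integrable representations---is exactly the route taken in the referenced source.
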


Using the Casselman-Wallach (CW) theorem in form of (Theorem 11.8.2
\cite{RRGII}) we have the following strengthening of the above theorem.

\begin{corollary}
If $(\pi,H)$ is an irreducible square integrable representation of $G$ and if
$v,w\in H^{\infty}$then the function $g\longmapsto\left\langle \pi
(g)v,w\right\rangle $ is in $\mathcal{C}(G)$.
\end{corollary}

\begin{proof}
We recall the space $\mathcal{S}(G)$ (see 7.1.2 \cite{RRGI}) the space of all
elements of $C^{\infty}(G)$ such that $\sup_{g\in G}\left\Vert g\right\Vert
^{d}\left\vert R_{x}f(g)\right\vert <\infty$ for all $x\in U(Lie(G))$ (thought
of as a left invariant differential operator) and all $d. $ Then
$\mathcal{S}(G)$ is a convolution algebra. If $(\pi,H)$ is a Hilbert
representation of $G$ then we can define an action of $\mathcal{S}(G) $ on
$H^{\infty}$ by
\[
\int_{G}f(g)\pi(g)vdg=\pi(f)v\text{.}%
\]
Theorem 11.8.2 in \cite{RRGII} implies that if $(\pi,H)$ is irreducible and
admissible then $\mathcal{S}(G)$ acts algebraically irreducibly on $H^{\infty
}. $ Thus in particular, $\pi(\mathcal{S}(G))u=H^{\infty}$. If $u\in H_{K}$
and $u\neq0$. \ Now assume that $(\pi,H)$ is as in the previous theorem. If
$v,w\in H^{\infty}$ and $u\in H_{K}$ is such that $u\neq0$ then exist
$f_{1},f_{2}\in\mathcal{S}(G)$ such that $\pi(f_{1})u=v,\pi(f_{2})u=w.$ So if
$c_{v,w}(g)=\left\langle \pi(g)v,w\right\rangle $ then
\[
c_{v},_{w}=\check{f}_{2}\ast c_{u,u}\ast f_{1}%
\]
with $\check{f}_{2}(g)=\overline{f_{2}(g^{-1})}$. Since $\mathcal{S}%
(G)\subset\mathcal{C}(G)$, and $\mathcal{C}(G)$ is closed under convolution
$c_{v,w}\in\mathcal{C}(G)$.
\end{proof}

A simple limiting argument implies that if $\mathcal{C}_{\pi}(G)$ is the
closure in $\mathcal{C}(G)$ of the span of the $c_{v,w}$ with $v,w$
$K$--finite elements of $H$ then

\begin{corollary}
With the notation above, the space $\mathcal{C}_{\pi}(G)$ is contained in the
space of cusp forms.
\end{corollary}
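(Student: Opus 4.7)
The plan is to reduce the corollary directly to Theorem \ref{Z-finite}: for $f \in \mathcal{C}_\pi(G)$ it suffices to verify (i) $f \in \mathcal{C}(G)$ and (ii) $\dim Z(Lie(G))f < \infty$, and the conclusion that $f$ is a cusp form will then be immediate.

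Write $f(g) = \langle \pi(g)v, w\rangle$ with $v, w \in H^\infty$. For (i), I would appeal to the discussion immediately preceding the corollary: Casselman--Wallach implies that the previous theorem, originally stated for $K$-finite matrix coefficients, extends verbatim to matrix coefficients of $C^\infty$ vectors. Concretely, algebraic irreducibility of the rapidly-decreasing convolution algebra on $H^\infty$ lets one write $v = \pi(\phi)v_0$ with $v_0$ a $K$-finite vector and $\phi$ rapidly decreasing with all derivatives, whence $f$ is a translate-convolution of $\phi$ with a $K$-finite matrix coefficient already known to lie in $\mathcal{C}(G)$; stability of $\mathcal{C}(G)$ under such operations gives $f \in \mathcal{C}(G)$. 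For (ii), since $(\pi, H)$ is irreducible and admissible, Dixmier's form of Schur's lemma produces an infinitesimal character $\chi \colon Z(Lie(G)) \to \mathbb{C}$ with $d\pi(z)u = \chi(z)u$ for every $z \in Z(Lie(G))$ and $u \in H^\infty$. Computing the action of the left-invariant differential operator $z$ on the matrix coefficient $f$ yields
\[
R_z f(g) = \langle \pi(g) d\pi(z) v, w \rangle = \chi(z) f(g),
\]
so $Z(Lie(G)) f = \mathbb{C} f$ is one-dimensional, a fortiori finite-dimensional.

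With (i) and (ii) in hand, Theorem \ref{Z-finite} gives $f^P = 0$ for every proper parabolic $P$, i.e.\ $f$ is a cusp form, and hence $\mathcal{C}_\pi(G)$ is contained in the space of cusp forms. The only step requiring nontrivial input is (i), which is not a fresh result but rather the author's own extension via Casselman--Wallach of the preceding theorem from $K$-finite vectors to $C^\infty$ vectors; once it is granted, the corollary is essentially a one-line consequence of Theorem \ref{Z-finite} together with the existence of an infinitesimal character.
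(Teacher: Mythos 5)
Your argument is correct and is exactly the route the paper intends: the preceding theorem (extended from $K$-finite vectors to matrix coefficients of $C^\infty$ vectors via the Casselman--Wallach discussion) supplies $f\in\mathcal{C}(G)$, and Theorem \ref{Z-finite} then yields the cusp-form property. The only cosmetic difference is that you obtain $\dim Z(Lie(G))f<\infty$ directly from the infinitesimal character, whereas the paper states $\dim Z_{G}(Lie(G))f=1$ and relies on the remark in Section 2 that $Z(Lie(G))$ is a finitely generated $Z_{G}(Lie(G))$-module; both are immediate.
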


We are now ready to close the circle and describe one of Harish-Chandra's
deepest results. We set $\mathcal{E}_{2}(G)$ equal to the set of irreducible
square integrable representations of $G$ (obviously, in light of Schur's
Lemma, $\mathcal{E}_{2}(G)=\emptyset$ if the center of $G$ is not compact). If
$\omega\in\mathcal{E}_{2}(G)$ and if $(\pi,H)$ is a representative of $\omega$
then we set $\mathcal{C}_{\omega}(G)=\mathcal{C}_{\pi}(G)$.

\begin{theorem}
Assume that $G$ has compact center. Then the space of cusp forms on $G$ is the
topological direct sum
\[
\bigoplus_{\omega\in\mathcal{E}_{2}(G)}\mathcal{C}_{\omega}(G).
\]

\end{theorem}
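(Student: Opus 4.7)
The statement breaks naturally into three pieces: containment of each $\mathcal{C}_{\omega}(G)$ in the space of cusp forms, directness of the algebraic sum, and exhaustion of the cusp forms together with continuity of the projections.

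The first piece is precisely the preceding corollary. For algebraic directness I would invoke Schur orthogonality for square integrable representations: matrix coefficients of representatives of inequivalent $\omega,\omega'\in\mathcal{E}_{2}(G)$ are orthogonal in $L^{2}(G)$. Property 3 of $\Xi$, combined with the Schwartz estimate $|f(g)|\leq C_{k}\Xi(g)(1+\log\|g\|)^{-k}$ for all $k$, gives a continuous inclusion $\mathcal{C}(G)\hookrightarrow L^{2}(G)$, so the mutually $L^{2}$-orthogonal spaces $\mathcal{C}_{\omega}(G)$ form an internal direct sum inside $\mathcal{C}(G)$ and each is closed in the Fr\'echet topology.

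For exhaustion, let $f\in{}^{\circ}\mathcal{C}(G)$. The $K$-type convergence theorem stated above writes $f=\sum_{\gamma}f_{\gamma}$ in $\mathcal{C}(G)$; since $E_{\gamma}$ commutes with the integration $\int_{N_{P}}$ defining $f\mapsto f^{P}$, each $f_{\gamma}$ is again a cusp form. Applying the analogous decomposition under the left $K$-action reduces matters to a bi-$K$-finite cusp form $h\in{}^{\circ}\mathcal{C}(G)$. The strategy is then to show $\dim Z(Lie(G))h<\infty$: granted that, the theorem appearing just before the corollary identifies $U(Lie(G))_{\mathbb{C}}\mathrm{Span}_{\mathbb{C}}R_{K}h$ with a finite direct sum of Harish-Chandra modules of square integrable representations, and hence $h$ is a finite linear combination of matrix coefficients of $C^{\infty}$ vectors drawn from such representations, i.e.\ $h\in\bigoplus_{\omega\in F}\mathcal{C}_{\omega}(G)$ for some finite $F\subset\mathcal{E}_{2}(G)$. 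Reassembling the contributions over the $K$-types on both sides and passing to closure in $\mathcal{C}(G)$ then yields the decomposition.

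The principal obstacle, and the deep Harish-Chandra input, is this $Z(Lie(G))$-finiteness for bi-$K$-finite elements of ${}^{\circ}\mathcal{C}(G)$: vanishing of all constant terms together with bi-$K$-finiteness forces the cyclic $(Lie(G),K)$-module generated by $h$ under the regular actions to have finite length, so that $Z(Lie(G))$ necessarily acts on it through a finite-dimensional quotient. Once the algebraic direct sum is established, upgrading it to a topological direct sum amounts to exhibiting the projection onto each closed summand $\mathcal{C}_{\omega}(G)$ as right convolution with a bi-$K$-finite idempotent built from a finite-dimensional block of matrix coefficients of $\omega$; its continuity in each seminorm $p_{k,x,y}$ is then routine from the Fr\'echet convolution algebra structure of $\mathcal{C}(G)$.
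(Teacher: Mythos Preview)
Your proposal is correct and follows essentially the same route as the paper. The paper's argument is in fact just the one-line remark ``This follows from the following difficult converse to Theorem~\ref{Z-finite},'' that converse being Theorem~\ref{cuspmainG}: a $K$-finite cusp form is automatically $Z(Lie(G))$-finite. Your outline unpacks exactly how that single input, together with the $K$-type expansion and the structure theorem for $Z$-finite Schwartz functions, yields the decomposition; the only minor difference is that the paper's Theorem~\ref{cuspmainG} requires only (one-sided) $K$-finiteness, so the additional reduction to bi-$K$-finite elements is unnecessary.
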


This follows from Harish-Chandra's difficult converse to Theorem
\ref{Z-finite}.

\begin{theorem}
\label{cuspmainG}(c.f. \cite{RRGI} Theorem 7.7.6) Assume that the center of
$G$ is compact. If $f$ is a cusp form on $G$ that is $K$--finite then $\dim
Z(Lie(G))f<\infty$.
\end{theorem}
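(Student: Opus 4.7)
The plan is to combine the ``philosophy of cusp forms'' with the finite-multiplicity property of $K$-types in the discrete series. The strategy: cuspidality forces $f$ to lie in the discrete-series part of the tempered spectrum; $K$-finiteness then forces only finitely many discrete series to contribute; and each discrete series has a single infinitesimal character, so $Z(Lie(G))$ acts through finitely many characters and hence with finite-dimensional image.

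The heart of the matter is to show that a $K$-finite cusp form in $\mathcal{C}(G)$ lies in the topological direct sum $\bigoplus_{\omega \in \mathcal{E}_2(G)} \mathcal{C}_\omega(G)$ of matrix coefficients of discrete series. This is the content of the spectral philosophy: cuspidality is precisely the condition cutting out the discrete part of the tempered dual. I would carry this out, as in \cite{RRGI} Theorem 7.7.6, by showing that the constant term along a cuspidal parabolic $P$ of a wave packet of Eisenstein integrals with data $(P, \omega, \nu)$ essentially recovers the wave packet data (via intertwining operators from the constant term map), whereas the constant term of a discrete-series matrix coefficient vanishes along every proper parabolic. Thus the hypothesis $f^P = 0$ for all proper $P$ kills all wave-packet (continuous-spectrum) contributions to $f$, yielding a decomposition $f = \sum_{\omega} f_\omega$ with $f_\omega \in \mathcal{C}_\omega(G)$ convergent in $\mathcal{C}(G)$.

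To conclude, observe that each $f_\omega$ is itself $K$-finite with $K$-spectrum contained in that of $\omega|_K$ (on each side), so if $f_\omega \neq 0$ then $\omega|_K$ shares a $K$-type with the finite $K$-support of $f$. Since $G$ has compact center, $\mathcal{E}_2(G)$ is nonempty only when $G$ has a compact Cartan subgroup; in that case the Harish-Chandra parameterization of the discrete series shows that only finitely many $\omega$ contain a given $K$-type, because the lowest $K$-type bounds the Harish-Chandra parameter. Hence $f = \sum_\omega f_\omega$ is in fact a finite sum; each summand has a single infinitesimal character $\chi_\omega$; and $Z(Lie(G))$ acts on the span of the $f_\omega$ through the finite set of characters $\{\chi_\omega : f_\omega \neq 0\}$, yielding $\dim Z(Lie(G)) f < \infty$.

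The hardest step will be establishing the orthogonality of cusp forms to Eisenstein wave packets from proper cuspidal parabolics. This is the deep analytic content of the theorem and requires Harish-Chandra's detailed analysis of the constant term map along cuspidal parabolics together with the spectral description of its image; it is precisely the step at which the converse direction to Theorem \ref{Z-finite} ceases to be formal. The remaining reductions ($K$-isotypic decomposition and the parameter count for discrete series containing a fixed $K$-type) are then essentially bookkeeping on top of this central input.
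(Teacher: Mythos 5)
The paper does not actually prove Theorem \ref{cuspmainG}: it is quoted as Harish-Chandra's ``difficult converse'' to Theorem \ref{Z-finite} and referred out to \cite{RRGI}, Theorem 7.7.6. Your endgame is correct and is the standard one: once a $K$--finite cusp form is known to lie in $\bigoplus_{\omega\in\mathcal{E}_{2}(G)}\mathcal{C}_{\omega}(G)$, the facts that only finitely many discrete series contain a fixed $K$--type and that each $\mathcal{C}_{\omega}(G)$ is a $Z(Lie(G))$--eigenspace (the paper records $\dim Z_{G}(Lie(G))f=1$ for such matrix coefficients, and $Z(Lie(G))$ is a finitely generated $Z_{G}(Lie(G))$--module) give $\dim Z(Lie(G))f<\infty$. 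That bookkeeping is fine.

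The gap is that the whole content of the theorem is the step you defer, and the mechanism you propose for it is not the one that works at this stage. You want to show cusp forms are orthogonal to wave packets of Eisenstein integrals by computing constant terms of wave packets; but that machinery \emph{is} the Plancherel theorem (Section 6 here, Harish-Chandra's papers II--III), which in both this paper's and \cite{RRGI}'s logical order comes \emph{after} the cusp-form theory and uses it (on the Levi factors ${}^{o}M_{P}$) as input. In particular, ``$f^{P}=0$ kills all wave-packet contributions'' presupposes that wave packets exhaust the continuous part of $\mathcal{C}(G)$, which is again the Plancherel theorem. The argument behind \cite{RRGI} Theorem 7.7.6 instead runs: (i) every irreducible tempered unitary representation that is not square integrable is infinitesimally equivalent to a subrepresentation of some $I_{P,\sigma,i\nu}$ with $P$ a proper cuspidal parabolic and $\sigma\in\mathcal{E}_{2}({}^{o}M_{P})$ (classification of tempered representations via the theory of the constant term); (ii) for such a representation and a cusp form $f$ the absolutely convergent integral $\int_{G}f(g)\overline{\left\langle \pi_{P,\sigma,i\nu}(g)v,w\right\rangle }dg$ unwinds through the Harish-Chandra transform $\left(  L(n)R(k)f\right)  ^{\bar{P}}$ --- exactly the computation of Lemma \ref{product-formula} --- and hence vanishes; (iii) the abstract direct-integral decomposition of $L^{2}(G)$ then forces $f$ into the discrete part $\bigoplus_{\omega}\mathcal{C}_{\omega}(G)$. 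Steps (i) and (iii) are the deep inputs, and neither appears in your outline; naming ``orthogonality to wave packets'' as the hard step does not substitute for them.
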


\section{The Schwartz space adapted to Whittaker models}

In this section we study a parallel theory to that of the previous section for
so called Whittaker functions. We retain the notation of the preceding
section. Let $\chi:N_{o}\rightarrow S^{1}$ be a unitary character. We say that
$\chi$ is generic if its differential is non-zero on every $A_{o}$ weight
space in $Lie(N_{o})/[Lie(N_{o}),Lie(N_{o})]$. We set $C^{\infty}%
(N_{o}\backslash G;\chi)$ equal to the space of smooth functions on $G$ such
that $f(ng)=\chi(n)f(g)$ for $n\in N_{o}$ and $g\in G$.

We define a unitary representation $L^{2}(N_{o}\backslash G;\chi)$ as
\ follows: We fix an invariant measure on $N_{o}$ and take the corresponding
right invariant measure on $N_{o}\backslash G$, $d\bar{g}$. The Hilbert space
is the space of all measurable (with respect to Haar measure) functions on $G$
such that

1. $f(ng)=\chi(n)f(g)$ for $n\in N_{o}$ and $g\in G$.

2.$\left\Vert f\right\Vert ^{2}=\int_{N_{o}\backslash G}|f(\bar{g})|^{2}%
d\bar{g}=\int_{A_{o}\times K}|f(ak)|^{2}a^{-2\rho}da<\infty$($\rho=\rho_{o}$,
up to normalization of $da$ on $A$).

If $f\in L^{2}(N_{o}\backslash G;\chi)$ then we define $\pi_{\chi
}(g)f(x)=f(xg)$ for $x,g\in G$. \ This defines a unitary representation of $G
$.

The right-most formula in 2. suggests the generalization of the Harish-Chandra
Schwartz space in this context. We define
\[
p_{d,x}(f)=\sup_{g\in G}(a(g)^{-\rho}(1+\left\Vert \log(a(g))\right\Vert
)^{d}|xf(g)|
\]
for $d\in\mathbb{R}$ and $x\in U(Lie(G))$. Then $\mathcal{C(}N_{o}\backslash
G;\chi)$ is the space of all $f\in C^{\infty}(N_{o}\backslash G;\chi)$ such
that $p_{d,x}(f)<\infty$ for all choices of $d$ and $x.$ We endow
$\mathcal{C(}N_{o}\backslash G;\chi)$ with the topology defined by the
semi-norms $p_{d,x}$. This defines the Fr\'{e}chet space given in \cite{RRGII}
Chapter 15. In light of Lemma \ref{basic-inequality} the following
semi--norms
\[
q_{d,x}(f)=\sup_{g\in G}(a(g)^{-\rho}(1+\log\left\Vert a(g)\right\Vert
)^{d}|xf(g)|
\]
are equivalent to the $p_{d,x}$ we will use one or the other depending on
convenience. \ We will also need another similar space. Here we use the
(Langlands) decomposition
\[
G=A_{G}{}^{o}G
\]
With $A_{G}$ a connected subgroup of \thinspace$A_{o}$ and $^{o}G$ has compact
center. If $d,r\geq0,x\in U(Lie(G))$ and $g=ag_{o}$ with $a\in A_{G} $ and
$g_{o}\in{}^{o}G$ then we define for $f\in C^{\infty}(N_{o}\backslash
G;\chi)$
\[
q_{d,k,x}(f)=\sup_{g\in{}^{o}G,a\in A_{G}}a(g)^{-\rho}(1+\log\left\Vert
a(g_{o})\right\Vert )^{k}(1+\log\left\Vert a\right\Vert )^{-d}\left\vert
xf(ag_{o})\right\vert .
\]
We set for each $d\geq0$, $\mathcal{B}_{d}(N_{o}\backslash G;\chi)$ equal to
the space of all $f\in C^{\infty}(N_{o}\backslash G;\chi)$ such that
$q_{d,k,x}(f)<\infty$ for all $k\geq0$ and $x\in U(Lie(G))$. \ Then
\[
\mathcal{B}(N_{o}\backslash G;\chi)=\lim_{\rightarrow}\mathcal{B}_{d}%
(N_{o}\backslash G;\chi)
\]
is a LF space. We note that if $G$ has compact center then $\mathcal{B}%
(N_{o}\backslash G;\chi)=\mathcal{C}(N_{o}\backslash G;\chi)$.

We have

\begin{lemma}
\label{Ltwo}$\mathcal{C(}N_{o}\backslash G;\chi)\subset L^{2}\mathcal{(}%
N_{o}\backslash G;\chi)$.
\end{lemma}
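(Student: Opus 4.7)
The plan is to bound $|f|$ pointwise using one of the defining seminorms of $\mathcal{C}(N_o\backslash G;\chi)$ and then integrate using the Iwasawa-style integration formula for the $L^2$-norm on $N_o\backslash G$.

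First, take $x=1$ in the seminorm $p_{d,x}$. By definition,
\[
|f(g)|\leq p_{d,1}(f)\,a(g)^{\rho}(1+\|\log a(g)\|)^{-d}
\]
for every $d\geq 0$ and every $g\in G$. Since both sides are $N_o$-invariant in absolute value (the character $\chi$ has modulus $1$, and $a(ng)=a(g)$ for $n\in N_o$), this bound descends to $N_o\backslash G$.

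Next, applying the integration formula for $\|f\|^2$ stated in property 2 of the definition of $L^2(N_o\backslash G;\chi)$, and noting that for $g=ak$ with $a\in A_o$, $k\in K$ we have $a(g)=a$, we get
\[
\|f\|^{2}=\int_{A_{o}\times K}|f(ak)|^{2}a^{-2\rho}\,dk\,da\leq p_{d,1}(f)^{2}\,\mathrm{vol}(K)\int_{A_{o}}(1+\|\log a\|)^{-2d}\,da,
\]
the $a^{2\rho}$ factor from the pointwise bound cancelling the Jacobian $a^{-2\rho}$.

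Finally, transport the remaining integral to $\mathfrak{a}_{o}$ via $\exp$; since $\mathfrak{a}_{o}$ is a finite-dimensional Euclidean space, $\int_{\mathfrak{a}_{o}}(1+\|H\|)^{-2d}\,dH<\infty$ once $2d>\dim\mathfrak{a}_{o}$. Choose any such $d$; then $\|f\|^{2}<\infty$, which is the claim. There is essentially no obstacle: the only thing to verify is the trivial observation that the seminorms of $\mathcal{C}(N_o\backslash G;\chi)$ already include arbitrarily fast polynomial decay on $A_o$ after removing the $a^{\rho}$ factor, and this polynomial decay alone suffices against Lebesgue measure on $\mathfrak{a}_o$.
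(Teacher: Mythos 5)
Your argument is correct and is essentially identical to the paper's own proof: bound $|f(ak)|$ by $p_{d,1}(f)\,a^{\rho}(1+\|\log a\|)^{-d}$, substitute into the integration formula so the $a^{2\rho}$ cancels the Jacobian $a^{-2\rho}$, and choose $d$ large enough that the remaining integral over $A_{o}$ converges. You are in fact slightly more explicit than the text in carrying the seminorm constant and in identifying the convergence threshold $2d>\dim\mathfrak{a}_{o}$.
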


\begin{proof}
If $f\in\mathcal{C(}N_{o}\backslash G;\chi)$ then then for all $d>0$
\[
\left\vert f(ak\right\vert \leq C_{d}a^{\rho}(1+\left\Vert \log a\right\Vert
)^{-d}%
\]
Thus
\[
\int_{A_{o}\times K}|f(ak)|^{2}a^{-2\rho}dadk\leq C_{d}\int_{A_{o}\times
K}a^{2\rho}(1+\left\Vert \log a\right\Vert )^{-d}a^{-2\rho}dadk.
\]
Take $d$ so large that
\[
\int_{A_{o}}(1+\left\Vert \log a\right\Vert )^{-d}da<\infty.
\]

\end{proof}

\begin{proposition}
\label{Fourier}If $f\in\mathcal{C}(G)$ then the the integral
\[
f_{\chi}(g)=\int_{N_{o}}\chi(n)^{-1}f(ng)dn
\]
converges absolutely and uniformly on compacta in $g$ to an element of
$\mathcal{C(}N_{o}\backslash G;\chi)$. Furthermore, the map defined by
$T_{\chi}(f)=f_{\chi}$ is a continuous map from $\mathcal{C}(G)$ to
$\mathcal{C(}N_{o}\backslash G;\chi)$.
\end{proposition}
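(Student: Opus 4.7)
The plan is to reduce the entire statement to Proposition \ref{basic} applied to the minimal parabolic $P_o$. The starting observation is that the Harish-Chandra Schwartz seminorms give, for each $r \geq 0$ and each $y \in U(Lie(G))$, a continuous seminorm bound
\[
|(R_y f)(g)| \leq p_{r,1,y}(f)\, \Xi(g)(1+\log\|g\|)^{-r}.
\]
A change of variable in $N_o$ (which is unimodular) shows that whenever the integral defining $f_\chi$ converges we have $f_\chi(mg) = \chi(m) f_\chi(g)$ for $m \in N_o$, so $|f_\chi(g)| = |f_\chi(a(g) k(g))|$ via Iwasawa; thus it suffices to estimate $|f_\chi(ak)|$ for $a \in A_o$ and $k \in K$.

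Next I would apply Proposition \ref{basic} to $P_o$ with $m = 1 \in {}^{o}M_o$. Since ${}^{o}M_o$ is compact, $\Xi_{{}^{o}M_o}(1)$ is a finite constant, and the proposition yields, for any prescribed $v \geq 0$ and a correspondingly large $r$,
\[
\int_{N_o} \Xi(na)(1+\log\|na\|)^{-r}\, dn \leq C\, a^{\rho}(1+\log\|a\|)^{-v}.
\]
Combining this with the Schwartz bound and the $K$-bi-invariance of both $\Xi$ and $\|\cdot\|$ (so $\Xi(nak) = \Xi(na)$ and $\|nak\| = \|na\|$) gives $|f_\chi(ak)| \leq C\, p_{r,1,1}(f)\, a^{\rho}(1+\log\|a\|)^{-v}$. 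Multiplying by $a^{-\rho}(1+\|\log a\|)^{d}$ and using Lemma \ref{basic-inequality} to convert between $\log\|a\|$ and $\|\log a\|$, the resulting bound is finite once $v \geq d$. This already proves absolute convergence uniformly on compacta in $g$, hence continuity of $f_\chi$, and gives the $x = 1$ case of the seminorm estimate.

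For the full seminorm $p_{d,x}(f_\chi)$ with $x \in U(Lie(G))$, the key point is that $x$ is a left-invariant differential operator and therefore commutes with left translations by $N_o$; the uniform-on-compacta estimate applied to $xf \in \mathcal{C}(G)$ justifies differentiation under the integral sign and yields $x f_\chi = (xf)_\chi$. Rerunning the previous estimate with $xf$ in place of $f$ gives $p_{d,x}(f_\chi) \leq C_{d,x}\, p_{r,1,x}(f)$ with $r$ chosen suitably in terms of $d$; this simultaneously shows $f_\chi \in \mathcal{C}(N_o \backslash G; \chi)$ and the continuity of $T_\chi$. The only real technical point is the correct invocation of Proposition \ref{basic}: one must note that compactness of ${}^{o}M_o$ collapses the $m = 1$ specialization to a constant, and that the parameter $r$ in that proposition can be chosen arbitrarily large to absorb any desired polynomial decay in $\log\|a\|$. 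Everything else is Iwasawa bookkeeping and standard derivative-integral exchanges.
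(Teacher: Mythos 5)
Your proposal is correct and follows essentially the same route as the paper: bound $|f(nak)|$ by a Schwartz seminorm times $\Xi(na)(1+\log\left\Vert na\right\Vert)^{-r}$, invoke Proposition \ref{basic} for the minimal parabolic to obtain the $a^{\rho}(1+\log\left\Vert a\right\Vert)^{-v}$ decay, and pass to derivatives via $x f_{\chi}=(xf)_{\chi}$. Your direct specialization of Proposition \ref{basic} to $m=1$ is in fact slightly cleaner than the paper's intermediate splitting of $(1+\log\left\Vert an\right\Vert)^{-d}$ into separate factors in $a$ and $n$, but the substance is identical.
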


\begin{proof}
If $n\in N_{o}$, $a\in A_{o}$ and $k\in K$ then
\[
|f(nak)|\leq p_{d,1,1}(f)\Xi(na)(1+\log\left\Vert na\right\Vert )^{-d}.
\]
We note that there is an orthonormal basis of $V$ such that relative to that
basis $\mu(n)=I+X$ with $X$ upper triangular with zeroes on the main diagonal
and $\mu(a)$ is diagonal. Thus
\[
\mu(na)=\mu(a)+X\mu(a)
\]
and so if $a\in A_{o}\cap{}^{o}G$ then $\left\Vert an\right\Vert
\geq\left\Vert a\right\Vert $. If $a_{1}\in A_{G}$ and $a_{2}\in A_{o}\cap
{}^{o}G$ then $\left\Vert a_{1}a_{2}n\right\Vert =\left\Vert a_{1}\right\Vert
\left\Vert a_{2}n\right\Vert \geq\left\Vert a_{1}\right\Vert \left\Vert
a_{2}\right\Vert =\left\Vert a_{1}a_{2}\right\Vert $ so $\left\Vert
an\right\Vert \geq\left\Vert a\right\Vert $ for $a\in A_{o}$. Also $\left\Vert
n\right\Vert =\left\Vert a^{-1}an\right\Vert \leq\left\Vert a^{-1}\right\Vert
\left\Vert an\right\Vert =\left\Vert a\right\Vert \left\Vert an\right\Vert
1\leq\left\Vert an\right\Vert ^{2}.$ This implies that
\[
\left(  1+\log\left\Vert an\right\Vert \right)  \geq\left(  1+\log\left\Vert
a\right\Vert \right)
\]
and
\[
\left(  1+\log\left\Vert an\right\Vert \right)  \geq\frac{\left(
1+\log\left\Vert n\right\Vert \right)  }{2}.
\]
So if $d>0$
\[
\left(  1+\log\left\Vert an\right\Vert \right)  ^{-d}\leq2^{\frac{d}{2}%
}\left(  1+\log\left\Vert a\right\Vert \right)  ^{-\frac{d}{2}}\left(
1+\log\left\Vert n\right\Vert \right)  ^{-\frac{d}{2}}.
\]
This and Proposition \ref{basic} imply that for all $d>0$ and $n_{1}\in N_{o}
$, $a\in A_{o}$ and $k\in K$
\[
a^{-\rho}|f_{\chi}(n_{1}ak)|\leq a^{-\rho}\left\vert \int_{N_{o}}\chi
(n)^{-1}f(nn_{1}ak)dn\right\vert \leq a^{-\rho}\int_{N_{o}}|f(n_{1}nak)|dn
\]%
\[
=a^{-\rho}\int_{N_{o}}|f(nak)|dn\leq
\]%
\[
p_{d,1,1}(f)a^{-\rho}\int_{N_{o}}\Xi(na)(1+\log\left\Vert na\right\Vert
)^{-d}dn\leq
\]%
\[
cp_{d,1,1}(f)2^{\frac{d}{2}}(1+\log\left\Vert a\right\Vert )^{-\frac{d}{2}}.
\]
with $c$ independent of $d$ for $d$ sufficiently large. This implies that for
large $d$
\[
q_{\frac{d}{2},1}(f_{\chi})\leq2^{\frac{d}{2}}p_{d,1,1}(f).
\]
If $x\in U(Lie(G))$ then $\left(  xf\right)  _{\chi}=x(f_{\chi})$. Thus
\[
q_{\frac{d}{2},x}(f_{\chi})\leq2^{\frac{d}{2}}p_{d,1,x}(f).
\]

\end{proof}

We now come to a result that an is the analogue of Lemma 15.3.2 in
\cite{RRGII}. The latter lemma has an error in its proof (pointed out in
\cite{BanK} who also show that it cannot be true as stated).

Let $P$ be a standard parabolic subgroup (i.e. $P_{o}\subset P$) $P={}%
^{o}M_{P}A_{P}N_{P}$ (as in Section 2) and recall that $\overline{N}%
_{P}=\theta(N_{P})$. If $f\in\mathcal{C(}N_{o}\backslash G;\chi)$ and if
$m\in{}^{o}M_{P}$ and $a\in A_{P}$ then our candidate for the generalization
is
\[
f^{P}(ma)=a^{\rho}\int_{\overline{N}_{P}}f(\bar{n}ma)d\bar{n}.
\]
We will prove in Theorem \ref{15.3.Replace} that the integral converges
absolutely. We need the following simple lemma and to recall a key equality of
Harish-Chandra in the proof of the replacement to the defective lemma.

\begin{lemma}
\label{inequality}Let $V$ be a real inner product space. Let $V=V_{1}\oplus
V_{2}$ an orthogonal direct sum. Let $p_{1}$ \ denote the orthogonal
projection of $V$ onto $V_{1}$. Assume that we have $u\in V_{1},v\in V_{2}$
and $w\in V$ such that there exists $C>0$ such that
\[
1+\left\Vert p_{1}w\right\Vert \geq C(1+\left\Vert w\right\Vert ).
\]
Then
\[
1+\left\Vert u+v+w\right\Vert \geq\frac{C(1+\left\Vert w\right\Vert
)}{(1+\left\Vert u\right\Vert )}%
\]
and
\[
\left(  1+\left\Vert u+v+w\right\Vert \right)  ^{2}\geq\frac{C(1+\left\Vert
v\right\Vert )}{(1+\left\Vert u\right\Vert )}.
\]

\end{lemma}

\begin{proof}
Since $p_{1}v=0$,
\[
1+\left\Vert u+v+w\right\Vert \geq1+\left\Vert u+p_{1}(w)\right\Vert .
\]
Note that if $x,y\in V$ then
\[
1+\left\Vert x+y\right\Vert \geq\frac{1+\left\Vert x\right\Vert }{1+\left\Vert
y\right\Vert }.
\]
Indeed,
\[
1+\left\Vert x\right\Vert =1+\left\Vert x+y-y\right\Vert \leq
\]%
\[
1+\left\Vert x+y\right\Vert +\left\Vert y\right\Vert \leq\left(  1+\left\Vert
x+y\right\Vert \right)  (1+\left\Vert y\right\Vert ).
\]
Applying this to the content of the lemma we have
\[
1+\left\Vert u+v+w\right\Vert \geq1+\left\Vert u+p_{1}(w)\right\Vert \geq
\]%
\[
\frac{1+\left\Vert p_{1}w\right\Vert }{1+\left\Vert u\right\Vert }\geq
\frac{C(1+\left\Vert w\right\Vert )}{1+\left\Vert u\right\Vert }.
\]
This proves the first inequality. Also note that
\[
\left(  1+\left\Vert u+v+w\right\Vert \right)  (1+\left\Vert w\right\Vert
)\geq(1+\left\Vert u+v\right\Vert )\geq(1+\left\Vert v\right\Vert ).
\]
So the first inequality implies that
\[
\left(  1+\left\Vert u+v+w\right\Vert \right)  (1+\left\Vert w\right\Vert
)\leq C^{-1}\left(  1+\left\Vert u+v+w\right\Vert \right)  ^{2}(1+\left\Vert
u\right\Vert )
\]
proving the second inequality.
\end{proof}

The next lemma is due to Harish-Chandra (c.f. \cite{RRGI} Lemmas 3.A.2.3 and
4.A.2.3 (1) ) in it and in the rest of the section we will be using the
notation $a(g)$ for $a_{P_{o}}(g)$.

\begin{lemma}
\label{HCinequality}Let $P$ be a standard parabolic subgroup and let $\bar
{n}\in\bar{N}_{P}$ then
\[
\log a(\bar{n})\in\{\sum c_{\alpha}H_{\alpha}|c_{\alpha}\geq0\}
\]
(here if $\alpha\in\Phi(P_{o},A_{o})$ then $H_{\alpha}\in\mathfrak{a}_{o}$ is
defined by $B(H_{\alpha},h)=\alpha(h)$ for $h\in\mathfrak{a}_{o}$) and there
is a constant $C_{1}>0$ such that
\[
1-\rho_{P}(\log a(\bar{n}))\geq C_{1}(1+\left\Vert \log a(\bar{n})\right\Vert
)
\]
and%
\[
1-\rho_{P}(\log a(\bar{n}))\geq C_{1}(1+\log\left\Vert \bar{n}\right\Vert ).
\]

\end{lemma}

\begin{theorem}
\label{15.3.Replace}If $f\in\mathcal{B(}N_{o}\backslash G;\chi)$ then the
integral defining $f^{P}(ma)$ converges absolutely and uniformly for $m\in
{}^{o}M_{P}$ and $a\in A_{P}$ in compacta. Furthermore, the map $f\longmapsto
f^{P}$ is continuous from $\mathcal{B(}N_{o}\backslash G;\chi)$ to
$\mathcal{B(}N_{o}\cap M_{P}\backslash M_{P};\chi_{|N_{o}\cap M})$.
\end{theorem}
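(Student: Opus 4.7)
\emph{Plan.} The goal is to establish, for every $k'\geq 0$ and every $f\in\mathcal{B}_d(N_o\backslash G;\chi)$, an estimate of the form
\[
|f^P(ma)|\leq C(f)\,a(m)^{\rho-\rho_P}\bigl(1+\log\|a(m)\|\bigr)^{-k'}\bigl(1+\log\|a\|\bigr)^{d'+k'},\qquad m\in{}^oM_P,\ a\in A_P,
\]
with $C(f)$ a continuous seminorm of $f$. Here $\rho_P$ is extended by zero on $\mathfrak{a}_o\cap Lie({}^oM_P)$, so that $\rho-\rho_P$ restricts on ${}^oM_P$ to the half-sum of positive roots of the minimal parabolic $P_o\cap M_P$. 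Such an estimate simultaneously yields absolute and locally uniform convergence of the defining integral, membership $f^P\in\mathcal{B}(N_o\cap M_P\backslash M_P;\chi|_{N_o\cap M_P})$, and continuity of $f\mapsto f^P$.

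\emph{Step 1: Whittaker reduction and Iwasawa bookkeeping.} For $\bar n\in\overline{N}_P$, apply the Iwasawa decomposition $\bar n=n(\bar n)\,a(\bar n)\,k(\bar n)$ in $G=N_oA_oK$. The $(\chi,N_o)$-equivariance of $f$ gives
\[
f(\bar n\,ma)=\chi(n(\bar n))\,f(a(\bar n)\,k(\bar n)\,ma),
\]
so only $|f(a(\bar n)\,k(\bar n)\,ma)|$ need be estimated. Since $a\in A_P$ is central in $M_P$, an Iwasawa decomposition $m=n_m a_m k_m$ in $M_P$ gives $ma=n_m(a_m a)k_m$ as an Iwasawa decomposition in $G$. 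The cocycle identity $a(xy)=a(x)\,a(k(x)y)$ then yields
\[
a(\bar n\,ma)=a(\bar n)\cdot a(k(\bar n)\,ma).
\]

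\emph{Step 2: Exponential decay, then distribute with the norm lemma.} The defining seminorm of $\mathcal{B}_d$ gives, for every $k\geq 0$, a bound of the form
\[
|f(g)|\ \leq\ q_{d,k,1}(f)\,a(g_o)^{\rho}\bigl(1+\log\|a(g_o)\|\bigr)^{-k}\bigl(1+\log\|a_G\|\bigr)^{d+k}
\]
where $g=a_G g_o$, $a_G\in A_G$, $g_o\in{}^oG$. The exponential decay in $\bar n$ comes from the Harish-Chandra lemma preceding the theorem, applied at the minimal parabolic $P_o$ (legitimate because $\overline{N}_P\subset\overline{N}_o$): it yields $a(\bar n)^{\rho}\leq Ce^{-C_1\|\log a(\bar n)\|/2}$, making the $\bar n$-integral absolutely convergent uniformly on compacta in $m,a$. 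To extract the correct decay in $m$ and at-most polynomial growth in $a$, invoke the simple norm lemma preceding the theorem with $V_1=\mathfrak{a}_P$, $V_2=\mathfrak{a}_o\cap Lie({}^oM_P)$, $u=\log a$, $v=\log a(m)$, and $w=\log a(\bar n)$: since $\rho_P$ vanishes on $V_2$, the Harish-Chandra inequality implies $1+\|\mathrm{proj}_{\mathfrak{a}_P}\log a(\bar n)\|\geq C(1+\|\log a(\bar n)\|)$, which is exactly the hypothesis of the norm lemma. Hence $(1+\log\|a(\bar n\,ma)\|)^{-k}$ splits into separate decays in $\log\|a(m)\|$ and $\log\|a(\bar n)\|$ at the cost of polynomial growth in $\log\|a\|$. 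Combining the bounds and integrating over $\overline{N}_P$ yields the desired estimate.

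\emph{Main obstacle.} The delicate point is control of the factor $a(k(\bar n)\,ma)^{\rho}$ appearing in Step 1: it is not pointwise bounded in $k(\bar n)\in K$ by $a(ma)^{\rho}$, only in integral average (since $\int_K a(kg)^{\rho}dk=\Xi(g)$). One must therefore couple its $k(\bar n)$-dependence to the norm-lemma bookkeeping so that the residual non-uniformity is absorbed into the permitted polynomial factor in $\log\|a\|$ rather than corrupting the decay in $m$ or the exponential decay in $\bar n$. This is precisely the coupling that was mishandled in the original Lemma 15.3.2 of \cite{RRGII}. Once this is done in the compact-center case, the extension to the general $\mathcal{B}$-space is pure bookkeeping, carried out by tracking the $A_G$-prefactor $(1+\log\|a_G\|)^{d+k}$ through the estimate.
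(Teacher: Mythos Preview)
Your plan founders exactly at the point you flag as the ``main obstacle,'' and that obstacle is not a detail to be tidied up later---it is the reason the paper organizes the argument differently. By Iwasawa-decomposing $\bar n$ first you obtain $a(\bar n\,ma)=a(\bar n)\cdot a(k(\bar n)\,ma)$, and then your proposed application of the norm lemma with $u=\log a$, $v=\log a(m)$, $w=\log a(\bar n)$ is incoherent: $\log a(\bar n\,ma)$ equals $w+\log a(k(\bar n)\,ma)$, not $u+v+w$, so the lemma does not bound the quantity you need. The stray factor $a(k(\bar n)\,ma)^{\rho}$ is pointwise uncontrolled in $k(\bar n)\in K$, and you offer no mechanism to absorb it; ``coupling it to the bookkeeping'' is an aspiration, not an argument. (Incidentally, $a(\bar n)^{\rho}$ alone is generally \emph{not} integrable over $\bar N_P$---already in $SL_2$ it gives $\int(1+t^2)^{-1/2}\,dt$---so your ``exponential decay'' claim does not by itself deliver convergence; the polynomial weight is essential.)

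The paper avoids the obstacle entirely by decomposing in the opposite order. Write $m=n_1a_1k_1$ (Iwasawa in $M_P$) and use that $A_P$ is central in $M_P$ to pull $ma$ past $\bar n$ by conjugation: with $b=an_1a_1\in M_P$ one has $\bar n\,ma=n_1\,aa_1\,(b^{-1}\bar n\,b)\,k_1$, whence the \emph{exact} identity
\[
a(\bar n\,ma)=a\,a_1\,a(b^{-1}\bar n\,b),
\]
with no $K$-factor present. Now the norm lemma applies legitimately with $u=\log a$, $v=\log a_1$, $w=\log a(b^{-1}\bar n\,b)$, and its hypothesis $1+\|Pw\|\geq C(1+\|w\|)$ (projection onto $\mathfrak a_P$) is supplied by the Harish-Chandra lemma at the parabolic $P$, applied to $b^{-1}\bar n\,b\in\bar N_P$. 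Splitting $(1+\|\log a(\bar n\,ma)\|)^{-p}$ with $p=2k+d$ via the two inequalities of the norm lemma, and then changing variables $\bar n\mapsto b\bar n b^{-1}$, reduces the $\bar n$-integral to $\int_{\bar N_P}a(\bar n)^{\rho}(1+\|\log a(\bar n)\|)^{-d}\,d\bar n$, finite for $d$ large by \cite{RRGI}, Theorem~4.5.4. The conjugation trick is the missing idea; once you have it, there is no residual $k(\bar n)$-dependence to worry about.
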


\begin{proof}
Since the estimates on $A_{G}$ have no effect on the the integrals, we may
assume that $G$ has compact center and we need to prove the results for
$f\in\mathcal{C(}N_{o}\backslash G;\chi)$. If $l\geq0,\bar{n}\in\bar{N}%
_{P},m\in{}^{o}M_{P},a\in A_{P}$ then
\[
\left\vert f(\bar{n}ma)\right\vert \leq a(\bar{n}ma)^{\rho}(1+\left\Vert \log
a(\bar{n}ma)\right\Vert )^{-l}q_{l,1}(f).
\]
The Iwasawa decomposition for $m$ relative to $K_{1}=K\cap M_{P},A_{1}%
=A_{o}\cap{}^{o}M_{P},N_{1}=N_{o}\cap M_{P}$ is $m=n_{1}a_{1}k_{1}$. Thus
since $A_{P}$ is in the center of $M_{P}$ we have
\[
\bar{n}ma=\bar{n}an_{1}a_{1}k_{1}%
\]
so if we set $b=an_{1}a_{1}$ then
\[
a(\bar{n}ma)=a(an_{1}a_{1}b^{-1}\bar{n}b)=aa_{1}a(b^{-1}\bar{n}b).
\]
We will now apply the previous two lemmas to get some inequalities. Set
${}^{\ast}\mathfrak{a}=Lie(A_{o}\cap{}^{o}M_{P})$. We note that
\[
\mathfrak{a}_{o}=\mathfrak{a}_{P}\oplus{}^{\ast}\mathfrak{a}%
\]
orthogonal direct sum let $p_{1}$ be the projection of $\mathfrak{a}_{o}$ onto
$\mathfrak{a}_{P}$ corresponding to this direct sum decomposition. Noting that
if $h\in\mathfrak{a}_{o}$ then $\rho_{P}(h)=\rho_{P}(p_{1}h)$. Thus
$\left\vert \rho_{P}(h)\right\vert \leq C_{2}\left\Vert p_{1}h\right\Vert $
and we can assume that $C_{2}>1$. Noting that if $\bar{n}\in\bar{N}_{P}$ that
$\rho_{P}(\log a(\bar{n}))\leq0$ for $\bar{n}\in\bar{N}_{P}$ then the second
inequality in Lemma \ref{HCinequality} implies that
\[
C_{2}(1+\left\Vert p_{1}\log a(\bar{n})\right\Vert )\geq1-\rho_{P}(\log
(\bar{n})\geq C_{1}(1+\left\Vert \log a(\bar{n})\right\Vert ).
\]
If we set $u=\log a,v=\log a_{1}$ and $w=\log a(b^{-1}\bar{n}b)$ the
hypotheses of Lemma \ref{inequality} are satisfied with $C=C_{2}^{-1}C_{1}$ if
$\bar{n}\in\bar{N}_{P}$. So we have
\[
\log a(\bar{n}ma)=u+v+w
\]
so Lemma \ref{inequality} implies that
\[
1+\left\Vert \log a(\bar{n}ma)\right\Vert \geq\frac{C(1+\left\Vert \log
a(b^{-1}\bar{n}b)\right\Vert )}{(1+\left\Vert \log a\right\Vert )}%
\]
and
\[
\left(  1+\left\Vert \log a(\bar{n}ma)\right\Vert \right)  ^{2}\geq\frac
{C_{1}(1+\left\Vert \log a_{1}\right\Vert )}{(1+\left\Vert \log a\right\Vert
)}.
\]
Hence, writing $p=2k+d$
\[
a(\bar{n}ma)^{\rho}(1+\log\left\Vert a(\bar{n}ma)\right\Vert )^{-l}=
\]%
\[
(aa_{1})^{\rho}a(b^{-1}\bar{n}b)^{\rho}(1+\left\Vert \log(aa_{1}a(b^{-1}%
\bar{n}b))\right\Vert )^{-l}\leq
\]%
\[
C_{1}^{-\left(  k+d\right)  }(aa_{1})^{\rho}a(b^{-1}\bar{n}b)^{\rho
}(1+\left\Vert \log a\right\Vert )^{d+k}\times
\]%
\[
(1+\left\Vert \log a_{1}\right\Vert )^{-k}(1+\left\Vert \log a(b^{-1}\bar
{n}b)\right\Vert )^{-d}=
\]%
\[
C_{1}^{-\left(  k+d\right)  }(aa(m))^{\rho}a(b^{-1}\bar{n}b)^{\rho
}(1+\left\Vert \log a\right\Vert )^{d+k}\times
\]%
\[
(1+\left\Vert \log a(m)\right\Vert )^{-k}(1+\left\Vert \log a(b^{-1}\bar
{n}b)\right\Vert )^{-d}%
\]
Obviously, if $\Omega$ is a compact subset of $M_{P}$ then there is a constant
$C_{\Omega,d}$ such that
\[
0\leq C_{1}^{-\left(  k+d\right)  }a^{\rho}(1+\left\Vert \log a\right\Vert
)^{d}a(m)^{\rho}(1+\left\Vert \log a(m)\right\Vert )^{-d}\leq C_{\Omega,d}%
\]
if $ma\in\Omega$ since $m\in{}^{o}M_{P}$. \ Thus
\[
\int_{\overline{N}_{P}}\left\vert f(\bar{n}ma)\right\vert d\bar{n}\leq
\]%
\[
C_{\Omega,d}q_{d,1}(f)\int_{\overline{N}_{P}}a(b^{-1}\bar{n}b)^{\rho}\left(
1+\left\Vert \log a(b^{-1}\bar{n}b)\right\Vert \right)  ^{-d}d\bar{n}=
\]%
\[
C_{\Omega,d}q_{d,1}(f)a^{-2\rho}\int_{\overline{N}_{P}}a(\bar{n})^{\rho
}\left(  1+\left\Vert \log a(\bar{n})\right\Vert \right)  ^{-d}d\bar{n}.
\]
If $d$ is sufficiently large this integral converges (c.f. \cite{RRGI} Theorem
4.5.4) to $C_{2}$. Now if we put together everything we have proved so far
Lemma 1 implies that
\[
\left\vert f^{P}(ma)\right\vert \leq C_{3}^{-d-k}q_{p,1}(f)a(m)^{\rho}%
(1+\log\left\Vert a(m)\right\Vert )^{-k}(1+\log\left\Vert a\right\Vert
)^{d+k}.
\]
So
\[
q_{d,k,1}\left(  f^{P}\right)  \leq C_{4}^{-d-k}q_{2k+d,1}(f).
\]
Replacing $f$ by $xf$ for $x\in U(LieM_{P})$ yields $q_{d,k,x}\left(
f^{P}\right)  \leq C_{1}^{-d-k}q_{d+2k,x}(f)$. Thus $f\longmapsto f^{P}$ is
continuous from $\mathcal{C(}N_{o}\backslash G;\chi)$ to $\mathcal{B(}%
N_{o}\cap M_{P}\backslash M_{P};\chi_{|N_{o}\cap M})$.
\end{proof}

The next results can be proved using the arguments in the proofs of the two
preceding theorems.

\begin{corollary}
\label{basic-estimate}Let $P$ be a standard parabolic subgroup of $G$ and set
$\overline{N}=\overline{N}_{P}=\theta N_{P}$. If $f\in\mathcal{C}(G)$ then the
integral
\[
\int_{N_{o}\times\bar{N}}\left\vert f(n_{o}\bar{n})\right\vert dn_{o}d\bar{n}%
\]
is convergent and defines a continuous seminorm on $\mathcal{C}(G)$.
\end{corollary}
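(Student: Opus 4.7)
The plan is to integrate first over $N_o$ and then over $\bar{N}$, combining the bound used inside the proof of Proposition \ref{Fourier} with the integrability statement invoked near the end of the proof of Theorem \ref{15.3.Replace}. Both inputs already supply continuity in a Schwartz seminorm, so the resulting double integral will automatically be controlled by one of the $p_{d,x,y}$.

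For the inner integral, I would start from the basic pointwise estimate
\[
|f(nak)| \leq p_{d,1,1}(f)\,\Xi(na)\,(1+\log\|na\|)^{-d},
\]
which is valid for every $d \geq 0$ without any reference to a character $\chi$. This is exactly the inequality used at the opening of the proof of Proposition \ref{Fourier}. Writing any $g \in G$ as $g = n(g)a(g)k(g)$ and using left $N_o$-invariance of $dn$, the same $\Xi$-integration step in that proof (via Proposition \ref{basic} together with the elementary bound $(1+\log\|na\|)^{-d} \leq 2^{d/2}(1+\log\|a\|)^{-d/2}(1+\log\|n\|)^{-d/2}$) yields, for $d$ sufficiently large,
\[
\int_{N_o}|f(ng)|\,dn \;\leq\; C_d\, p_{d,1,1}(f)\,a(g)^\rho\,(1+\log\|a(g)\|)^{-d/2}.
\]

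Now I would specialize $g = \bar{n}$ for $\bar{n} \in \bar{N} = \bar{N}_P \subseteq \bar{N}_o$, so that $a(\bar{n}) \in A_o$, and apply Tonelli to the non-negative integrand. The iterated integral is then bounded by
\[
C_d\, p_{d,1,1}(f) \int_{\bar{N}_P} a(\bar{n})^\rho(1+\log\|a(\bar{n})\|)^{-d/2}\,d\bar{n}.
\]
For $d$ large this last integral is finite, which is precisely the convergence statement used at the end of the proof of Theorem \ref{15.3.Replace} (cf.\ \cite{RRGI} Theorem 4.5.4). Since the final bound is a constant multiple of $p_{d,1,1}(f)$, this simultaneously yields absolute convergence of $\int_{N_o \times \bar{N}}|f(n_o\bar{n})|\,dn_o\,d\bar{n}$ and the continuity of the associated seminorm on $\mathcal{C}(G)$. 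The main obstacle is essentially nonexistent: the work is splicing together the two preceding proofs, and the one item worth checking carefully is that the $N_o$ change of variable can be made before convergence is established — which is automatic for a non-negative integrand by the same Tonelli argument.
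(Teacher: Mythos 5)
Your proposal is correct and follows exactly the route the paper intends: the paper gives no separate argument but states that the corollary "follows from the proofs of the two preceding theorems," namely the $N_o$-integration estimate from the proof of Proposition \ref{Fourier} (valid for $|f|$ with trivial character) followed by the convergence of $\int_{\bar{N}_P}a(\bar{n})^{\rho}(1+\log\Vert a(\bar{n})\Vert)^{-d}d\bar{n}$ used at the end of the proof of Theorem \ref{15.3.Replace}. Your Tonelli remark and the resulting bound by a multiple of $p_{d,1,1}(f)$ give both convergence and continuity of the seminorm, as required.
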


\begin{corollary}
\label{new-try}Let $P$ be a standard parabolic subgroup of $G$ and set
$\overline{N}_{P}=\theta N_{P}$. If $f\in\mathcal{C}(G),m\in{}^{o}M_{P}$ and
$a\in A_{P}$ then then the integral
\[
\phi(ma)=\int_{N_{P}\times\bar{N}_{P}}f(n\bar{n}ma)dnd\bar{n}%
\]
converges absolutely and uniformly in compacta of $M$ and satisfies the
inequalities
\[
|L_{x}R_{y}\phi(ma)|\leq q_{x,y,d,k}(f)\Xi_{^{o}M_{P}}(m)(1+\log\left\Vert
m\right\Vert )^{-k}(1+\left\Vert \log a\right\Vert )^{d}%
\]
with $d,k>0,x,y\in U(\mathfrak{g})$ and $q_{x,y,d,k}$ is a continuous
semi-norm on $\mathcal{C}(G)$.
\end{corollary}

\begin{corollary}
\label{goodenough}Assume that $G$ has compact center. Let $P$ be a standard
parabolic subgroup of $G$. Let $H\in Lie(A_{P})$ be such that $\rho(H)\leq0 $
and let $m\in{}^{o}M_{P}$. If $f\in\mathcal{C(}N_{o}\backslash G;\chi) $ then
for each $d>0$ there exists $C_{d}$ such that
\[
\left\vert f^{P}(\exp(H)m)\right\vert \leq C_{d}(1-\rho(H))^{-d}a(m)^{\rho}.
\]

\end{corollary}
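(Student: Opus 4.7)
The plan is to sharpen the chain of estimates in the proof of Theorem~\ref{15.3.Replace} by invoking Harish-Chandra's Lemma 4.A.2.3 of \cite{RRGI} with respect to the full $\rho=\rho_{P_o}$ rather than $\rho_P$ (valid since $\bar N_P\subset\bar N_o$): for every $\bar n\in\bar N_P$ one has $\rho(\log a(\bar n))\le 0$ and $1-\rho(\log a(\bar n))\ge C(1+\|\log a(\bar n)\|)$. Following the manipulations of that proof verbatim — Iwasawa decomposition $m=n_1 a_1 k_1$ in ${}^oM_P$, the identity $a(\bar n m a)=a\cdot a(m)\cdot a(b^{-1}\bar n b)$ with $b=n_1 a a_1$, and the change of variables $\bar n\mapsto b\bar n b^{-1}$ whose Jacobian $a^{-2\rho_P}$ cancels against the $a^{\rho_P}\cdot a^\rho$ coming from the prefactor of $f^P$ and from $a(\bar n m a)^\rho$ (using that $(\rho-\rho_P)|_{\mathfrak a_P}=0$) — I reduce matters to controlling
$$J(H,m):=\int_{\bar N_P} a(\bar n)^\rho\bigl(1+\|H+\log a(m)+\log a(\bar n)\|\bigr)^{-p}\,d\bar n.$$

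Since both $-\rho(H)\ge 0$ (by hypothesis) and $-\rho(\log a(\bar n))\ge 0$ now contribute positively to $-\rho(H+\log a(m)+\log a(\bar n))$, applying $\|v\|\ge|\rho(v)|/\|\rho\|$ together with the elementary inequality $\max(X,Y)\ge\sqrt{XY}$ for $X,Y\ge 0$ yields, in the principal regime $\rho(\log a(m))\le -\rho(H)/2$,
$$1+\|H+\log a(m)+\log a(\bar n)\|\ge C'\sqrt{(1-\rho(H))(1+\|\log a(\bar n)\|)}.$$
Raising to the $-p$ power and integrating — using that $\int_{\bar N_P} a(\bar n)^\rho(1+\|\log a(\bar n)\|)^{-p/2}\,d\bar n<\infty$ for $p$ sufficiently large, which is a consequence of the same Harish-Chandra inequality and is analogous to property~4 of $\Xi$ recorded in Section~3 (this is the ``difficult inequality of Harish-Chandra'' alluded to in the introduction) — gives $J(H,m)\le C''(1-\rho(H))^{-p/2}$. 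Taking $p=2d$ plus a convergence buffer proves the bound in this regime.

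The complementary regime $\rho(\log a(m))>-\rho(H)/2$ forces $1+\|\log a(m)\|\ge c(1-\rho(H))$ and $a(m)^\rho\ge e^{-\rho(H)/2}$. In that case the estimate of Theorem~\ref{15.3.Replace} applied with large $k$ supplies the decay $(1+\|\log a(m)\|)^{-k}\le C(1-\rho(H))^{-k}$, and the growth factor $(1+\|\log a\|)^{d+k}$ is either absorbed into a power of $(1-\rho(H))$ when $\|H\|\le c(1-\rho(H))$, or harmless because $\|H^\perp\|$ (the component of $H$ orthogonal to $\rho$ inside $\mathfrak a_P$) is so large that the integrand defining $J$ has already decayed past the required rate. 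The main obstacle in executing this plan is making the $\max\ge\sqrt{\cdot}$ step rigorous in the presence of the cross term $\rho(\log a(m))$ of indeterminate sign; the case split based on the size of $\rho(\log a(m))$ relative to $-\rho(H)/2$ is engineered precisely to prevent that cross term from overwhelming the positive contributions of $-\rho(H)$ and $-\rho(\log a(\bar n))$.
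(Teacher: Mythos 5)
Your reduction to the integral $J(H,m)$ coincides with the paper's: the Iwasawa decomposition $m=n_{1}a_{1}k_{1}$, the identity $a(\bar{n}ma)=aa_{1}a(b^{-1}\bar{n}b)$, and the cancellation of the conjugation Jacobian against $a^{2\rho}$ are all exactly the steps in the proofs of Theorem \ref{15.3.Replace} and of this corollary, and your principal regime is handled correctly. The gap is the complementary regime $\rho(\log a(m))>-\rho(H)/2$, which you do not actually close. The estimate you import from Theorem \ref{15.3.Replace} carries the factor $(1+\log\left\Vert a\right\Vert )^{d+k}$, comparable to $(1+\left\Vert H\right\Vert )^{d+k}$, and when $\dim\mathfrak{a}_{P}\geq2$ this is not bounded by any power of $(1-\rho(H))$: take $H$ with $\rho(H)=0$ and $\left\Vert H\right\Vert $ arbitrarily large. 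Your proposed rescue --- that a large component of $H$ orthogonal to $\rho$ forces the integrand of $J$ to have ``already decayed past the required rate'' --- is a claim, not an argument; making it precise would require a further dichotomy on the size of the $\mathfrak{a}_{P}$--component of $\log a(\bar{n})$ relative to $\left\Vert H\right\Vert $ and another application of Harish-Chandra's inequality, none of which appears in your sketch.

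The difficulty is self-inflicted: the cross term $\rho(\log a(m))$ that forces your case split is an artifact of replacing $\rho_{P}$ by $\rho$ in Harish-Chandra's Lemma 4.A.2.3. The paper uses the $\rho_{P}$--version. Since $\log a(m)=\log a_{1}\in{}^{\ast}\mathfrak{a}=Lie(A_{o}\cap{}^{o}M_{P})$ and $\rho_{P}$ vanishes there, one has
\[
\rho_{P}\bigl(H+\log a(m)+\log a(b^{-1}\bar{n}b)\bigr)=\rho_{P}(H)+\rho
_{P}(\log a(b^{-1}\bar{n}b)),
\]
a sum of two nonpositive terms with no cross term at all. Hence $1+\left\Vert \log a(\bar{n}ma)\right\Vert \geq C(1-\rho_{P}(H))$ and simultaneously $\geq C(1-\rho_{P}(\log a(b^{-1}\bar{n}b)))$, so the exponent $p=k+d$ splits additively,
\[
(1+\left\Vert \log a(\bar{n}ma)\right\Vert )^{-k-d}\leq C^{-k-d}(1-\rho
(H))^{-k}(1-\rho_{P}(\log a(b^{-1}\bar{n}b)))^{-d},
\]
and the second factor is exactly what Harish-Chandra's convergence theorem integrates against $a(b^{-1}\bar{n}b)^{\rho}$. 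No square-root trick and no case analysis are needed, and the resulting bound is uniform in $H$ and $m$. Rewriting your argument with $\rho_{P}$ in place of $\rho$ turns your principal-regime computation into the entire proof.
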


\begin{proof}
We begin the argument as we did in the proof Theorem \ref{15.3.Replace} with
the same notation. Let $a=\exp(H)$ and $b=an_{1}a_{1}$. Note that
\[
a(\bar{n}ma)^{\rho}(1+\left\Vert \log a(\bar{n}ma)\right\Vert )^{-d}=
\]%
\[
(aa_{1})^{\rho}a(b^{-1}\bar{n}b)^{\rho}(1+\left\Vert \log\left(
aa_{1}a(b^{-1}\bar{n}b)\right)  \right\Vert )^{-d}.
\]
Also,
\[
\rho_{P}(\log\left(  aa_{1}a(b^{-1}\bar{n}b)\right)  =\rho_{P}(\log
(a))+\rho_{P}(\log a(b^{-1}\bar{n}b))
\]
and there exists $0<C<1$ such that
\[
1+\left\Vert \log a(\bar{n}ma)\right\Vert \geq C(1-\rho_{P}(\log\left(
aa_{1}a(b^{-1}\bar{n}b)\right)  ).\text{ }%
\]
Thus
\[
1+\left\Vert \log a(\bar{n}ma)\right\Vert \geq C\left(  1-\rho_{P}%
(\log(a))-\rho_{P}(\log a(b^{-1}\bar{n}b))\right)
\]
recalling from Lemma \ref{HCinequality} $\rho_{P}(\log a(b^{-1}\bar{n}%
b))\leq0$ we have
\[
1+\left\Vert \log a(\bar{n}ma)\right\Vert \geq C(1-\rho_{P}(\log a))
\]
and
\[
1+\left\Vert \log a(\bar{n}ma)\right\Vert \geq C(1-\rho_{P}(\log a(b^{-1}%
\bar{n}b)).
\]
This implies that
\[
a(\bar{n}ma)^{\rho}(1+\left\Vert \log a(\bar{n}ma)\right\Vert )^{-k-d}\leq
\]%
\[
C_{1}^{-k-d}(aa_{1})^{\rho}a(b^{-1}\bar{n}b)^{\rho}(1-\rho_{P}(\log
a))^{-k}(1-\rho_{P}(\log a(b^{-1}\bar{n}b))^{-d}.
\]
Harish-Chandra has shown that if $d>0$ is sufficiently large then
\[
\int_{\bar{N}}a(\bar{n})^{\rho}(1-\rho_{P}(\log a(\bar{n}))^{-d}<\infty
\]
(c.f. Theorem 4.5.4 \cite{RRGI})We have
\[
\left\vert f^{P}(ma)\right\vert \leq a^{\rho}\int_{\bar{N}}\left\vert
f(\bar{n}ma)\right\vert d\bar{n}\leq
\]%
\[
C_{1}^{-k-d}(a_{1})^{\rho}a^{2\rho}(1-\rho_{P}(\log a))^{-k}\int_{\bar{N}%
}a(b^{-1}\bar{n}b)^{\rho}(1-\rho_{P}(\log a(b^{-1}\bar{n}b))^{-d}d\bar{n}.
\]
If $d$ is sufficiently large then the integral converges in light of the lemma
above and the argument in Theorem \ref{15.3.Replace}. The estimate follows
from the formula.
\end{proof}

Let $\alpha_{1},...,\alpha_{l}$ be the simple roots of the parabolic $P_{o}$
relative to $A_{o}$.

\begin{lemma}
\label{N-estimate}We assume that $G$ has compact center and that $\chi$ is
generic. If $f\in\mathcal{C}(N_{o}\backslash G;\chi)$, $h\in\mathfrak{a}_{o}$,
$m_{i}\in\mathbb{Z}_{\geq0}$ is given for $1\leq i\leq l$ and $d>0$ then there
is a continuous seminorm $q=q_{\{m_{i}\},d}$ on $\mathcal{C}(N_{o}\backslash
G;\chi)$ such that
\[
|f(\exp(h)k)|\leq e^{-\sum_{ii=1}^{l}m_{i}\alpha_{i}(h)}(1+\left\Vert
h\right\Vert )^{-d}e^{\rho_{o}(h)}q(f).
\]

\end{lemma}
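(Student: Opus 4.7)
The plan is to exploit genericity of $\chi$ to iteratively apply right-invariant differential operators coming from root vectors in $\mathfrak{n}_{o}$, transferring the desired decay factor $e^{-\sum m_{i}\alpha_{i}(h)}$ onto $f$, and then invoke the defining Schwartz seminorm bound for $f$.

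For each simple root $\alpha_{i}$, I pick $X_{i}$ in the $\alpha_{i}$-root space of $\mathfrak{n}_{o}$ with $c_{i}:=d\chi(X_{i})\neq 0$; such an $X_{i}$ exists precisely because $\chi$ is generic. For $a=\exp(h)\in A_{o}$, the identity $\mathrm{Ad}(a^{-1})X_{i}=e^{-\alpha_{i}(h)}X_{i}$ yields $\exp(tX_{i})\,a=a\,\exp\!\bigl(t\,e^{-\alpha_{i}(h)}X_{i}\bigr)$. Combining this with the Whittaker transformation law $f(\exp(tX_{i})g)=e^{tc_{i}}f(g)$ and differentiating at $t=0$, I obtain the basic recurrence
\[
R_{X_{i}}f(\exp(h)k)=c_{i}\,e^{\alpha_{i}(h)}\,f(\exp(h)k).
\]
Since $R_{X_{i}}f$ is again in $\mathcal{C}(N_{o}\backslash G;\chi)$ (right translation commutes with left translation), I can iterate. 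Setting $Y:=X_{1}^{m_{1}}X_{2}^{m_{2}}\cdots X_{l}^{m_{l}}\in U(\mathrm{Lie}(G))$, the recursion gives
\[
R_{Y}f(\exp(h)k)=\Big(\prod_{i}c_{i}^{m_{i}}\Big)\,e^{\sum_{i}m_{i}\alpha_{i}(h)}\,f(\exp(h)k),
\]
which I invert as
\[
f(\exp(h)k)=\Big(\prod_{i}c_{i}^{-m_{i}}\Big)\,e^{-\sum_{i}m_{i}\alpha_{i}(h)}\,R_{Y}f(\exp(h)k).
\]

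To finish, I apply the Schwartz seminorm estimate to $R_{Y}f \in \mathcal{C}(N_{o}\backslash G;\chi)$: the Iwasawa $A_{o}$-component of $\exp(h)k$ is $\exp(h)$, so
\[
|R_{Y}f(\exp(h)k)|\leq e^{\rho(h)}(1+\|h\|)^{-d}\,q_{d,Y}(f).
\]
Substituting into the previous display produces the claim with $q:=\bigl(\prod_{i}|c_{i}|^{-m_{i}}\bigr)\,q_{d,Y}$. The only delicate point is the non-commutativity of the $X_{i}$'s when iterating, but this is not a genuine obstacle: at every stage the intermediate function $R_{X_{i_{1}}}\cdots R_{X_{i_{s}}}f$ still lies in $\mathcal{C}(N_{o}\backslash G;\chi)$, so the scalar recurrence applies at each step and the weight factors $e^{\alpha_{i}(h)}$ simply multiply, independently of the order in which the letters in $Y$ are arranged.
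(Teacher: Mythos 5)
Your strategy is the same as the paper's: use genericity to choose root vectors $X_{i}$ with $c_{i}=d\chi(X_{i})\neq 0$, differentiate in those directions to manufacture the factor $e^{-\sum m_{i}\alpha_{i}(h)}$, and then apply the defining seminorms. However, there is a genuine gap in the execution. The identity your computation actually establishes is
\[
\frac{d}{dt}\Big|_{t=0}f(\exp(h)\exp(tX_{i})k)=c_{i}e^{\alpha_{i}(h)}f(\exp(h)k),
\]
and this left-hand side is \emph{not} $(R_{X_{i}}f)(\exp(h)k)$. Since $R_{X_{i}}f(g)=\frac{d}{dt}|_{t=0}f(g\exp(tX_{i}))$, the derivative for $g=\exp(h)k$ is taken to the right of the factor $k$, and what one really has is
\[
\frac{d}{dt}\Big|_{t=0}f(\exp(h)\exp(tX_{i})k)=\bigl(R_{\mathrm{Ad}(k^{-1})X_{i}}f\bigr)(\exp(h)k).
\]
For $k\neq e$ the vector $\mathrm{Ad}(k^{-1})X_{i}$ is a general element of $Lie(G)$, and $R_{X_{i}}f(\exp(h)k)$ is not a scalar multiple of $f(\exp(h)k)$: pushing $X_{i}$ past $\exp(h)k$ produces components outside $\mathfrak{n}_{o}$ on which $f$ is not an eigenfunction. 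Consequently your displayed recurrence, the identity $R_{Y}f(\exp(h)k)=(\prod c_{i}^{m_{i}})e^{\sum m_{i}\alpha_{i}(h)}f(\exp(h)k)$, and the final seminorm estimate are all asserted for the wrong object; they are correct only at $k=e$. The ``delicate point'' you flag (non-commutativity of the $X_{i}$) is indeed harmless, but it is not where the difficulty lies.

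The missing step, which is exactly what the paper supplies, is to expand $\mathrm{Ad}(k^{-1})X_{i}=\sum_{j}a_{j}(k^{-1},X_{i})x_{j}$ over a fixed basis $x_{1},\dots,x_{n}$ of $Lie(G)$, with coefficients satisfying $|a_{j}(k,X)|\leq C\Vert X\Vert$ uniformly on $K$. This yields
\[
f(\exp(h)k)=c_{i}^{-1}e^{-\alpha_{i}(h)}\sum_{j}a_{j}(k^{-1},X_{i})\,(R_{x_{j}}f)(\exp(h)k),
\]
each $R_{x_{j}}f$ again lies in $\mathcal{C}(N_{o}\backslash G;\chi)$, and iterating gives $f(\exp(h)k)=e^{-\sum m_{i}\alpha_{i}(h)}\sum_{j}b_{j}(k)(R_{y_{j}}f)(\exp(h)k)$ with $b_{j}$ bounded on $K$ and $y_{1},\dots,y_{r}$ a basis of the finite-dimensional filtration step $U^{\sum m_{i}}(Lie(G))$. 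Only then can the seminorms $q_{d,y_{j}}$ be applied termwise to obtain the stated bound with $q=\sum_{j}C_{j}q_{d,y_{j}}$. As written, your argument skips this uniform expansion over $K$ and therefore does not close.
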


\begin{proof}
Let $F=\{i|m_{i}>0\}$ and let $x_{1},...,x_{n}$ be a basis of $Lie(G)$. If
$X\in Lie(G)$ and if $k\in K$ then we can write $Ad(k)X=\sum a_{i}(k,X)x_{i}$.
Note that
\[
\left\vert a_{i}(k,X)\right\vert \leq C\left\Vert X\right\Vert
\]
for all $k\in K$. Now let $X_{i}$ be an element of the $\alpha_{i}$ root space
in $\mathfrak{n}_{o}$ such that $d\chi(X_{i})=z_{i}\neq0$. Then
\[
f(\exp(h)k)=z_{i}^{-1}L_{X_{i}}f(\exp(h)k)=z_{i}^{-1}\frac{d}{dt}_{|t=0}%
f(\exp(tX_{i})\exp(h)k)=
\]%
\[
z_{i}^{-1}\frac{d}{dt}_{|t=0}f(\exp(h)\exp(tAd(\exp(-h))X_{i})k)=
\]%
\[
z_{i}^{-1}\frac{d}{dt}_{|t=0}f(\exp(h)\exp(te^{-\alpha_{i}(h)}X_{i})k)=
\]%
\[
z_{i}^{-1}\frac{d}{dt}_{|t=0}f(\exp(h)\exp(te^{-\alpha_{i}(h)}Ad(k^{-1}%
)X_{i})k)=
\]%
\[
e^{-\alpha_{i}(h)}z_{i}^{-1}\sum a_{j}(k^{-1},X_{i})x_{j}f(\exp(h)k).
\]
Iterating this argument yields and expression
\[
f(\exp(h)k)=e^{-\sum_{i\in F}m_{i}\alpha_{i}(h)}Z(k)f(ak)
\]
with $Z$ a smooth function from $K$ to $L=U^{\sum_{i\in F}m_{i}}(Lie(G))$ with
$U^{j}(Lie(G))$ the standard filtration. If we choose a basis of $L$,
$y_{1},...,y_{r}$ then we have
\[
Z(k)=\sum b_{i}(k)y_{i}%
\]
with $b_{i}$ continuous functions on $K.$ Let $C_{j}=\max_{k\in K}\left\vert
b_{j}(k)\right\vert $. We have
\[
\left\vert f(\exp(h)k)\right\vert \leq e^{-\sum_{i\in F}m_{i}\alpha_{i}%
(h)}\sum_{j}C_{j}\left\vert y_{j}f(\exp(h)k)\right\vert \leq
\]%
\[
e^{-\sum_{i\in F}m_{i}\alpha_{i}(h)}(1+\left\Vert h\right\Vert )^{-d}%
e^{\rho_{o}(h)}\sum_{j}C_{j}q_{d,y_{j}}(f).
\]

\end{proof}

\begin{corollary}
\label{xi-estimate}Assume that $\chi$ is generic then if $f\in\mathcal{C}%
(N_{o}\backslash G;\chi)$ and $x\in U(\mathfrak{g})$ then there exist $d$ and
$m $ such that
\[
\left\vert xf(g)\right\vert \leq q_{x,d,m}(f)\Xi(a(g))a(g)^{-\sum_{i\in
F}r_{i}\alpha_{i}}(1+\log\left\Vert a(g)\right\Vert )^{-d}.
\]
with $r_{i}\in\mathbb{Z}_{\geq0}.$
\end{corollary}

\begin{proof}
If $a\in A_{o}$ then $a=\exp(h)$ with $h\in-s\overline{\mathfrak{a}_{o}^{+}}$
for some $s\in W(A_{o})$. Harish-Chandra's estimates (2. in the previous
section) imply that
\[
a^{s\rho_{o}}\leq\Xi(a)\leq Ca^{s\rho_{o}}(1+\log\left\Vert a\right\Vert
)^{d}.
\]
Now the desired inequality follows from $s\rho_{o}(h)=\rho_{o}(h)-\sum
_{i}m_{i}\alpha_{i}(h)$ with $m_{i}\geq0$. The previous lemma completes the proof.
\end{proof}

We have seen in assertion 7. section 3 that if $G$ has compact center then the
standard norm, $\left\Vert ...\right\Vert $, on $G$ such that there exist
$C_{1},C_{2},r>0$ such that
\[
C_{1}\left\Vert g\right\Vert ^{-1}\leq\Xi(g)\leq C_{2}\left\Vert g\right\Vert
^{-1}(1+\log\left\Vert g\right\Vert )^{r}.
\]

\begin{theorem}
Assume that $\chi$ is generic. Let $f\in\mathcal{C}(N_{o}\backslash G;\chi)$
be right $K$--finite and let $\varphi\in C_{c}^{\infty}(N_{o})$ then
$\psi(nak)=\varphi(n)f(ak)$ for $n\in N_{o},a\in A_{o}$ and $k\in K$ defines
an element of $\mathcal{C}(G)$.
\end{theorem}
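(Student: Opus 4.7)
The plan is to verify that $p_{k,x,y}(\psi)=\sup_g(1+\log\|g\|)^k\Xi(g)^{-1}|R_yL_x\psi(g)|$ is finite for every $k\geq 0$ and all $x,y\in U(Lie(G))$. Smoothness of $\psi$ is immediate from the fact that the Iwasawa map $N_o\times A_o\times K\to G$ is a diffeomorphism.

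First I would rewrite $\psi(g)=(\tilde\varphi\circ n)(g)\cdot f(g)$ where $\tilde\varphi(n)=\varphi(n)\overline{\chi(n)}\in C_c^\infty(N_o)$, using the identity $f(n(g)a(g)k(g))=\chi(n(g))f(a(g)k(g))$. Then $\psi$ is supported where $n(g)$ lies in the compact set $\mathrm{supp}(\varphi)$. On that set the comparisons $\Xi(g)\asymp\Xi(a(g))$ (from $\Xi(nak)=\Xi(a^{-1}n^{-1})$ combined with the bound $L_Y\Xi(x)\leq\Xi(xy)\leq M_Y\Xi(x)$ for $y$ in a compact set) and $(1+\log\|g\|)\asymp(1+\log\|a(g)\|)$ (from the submultiplicativity of $\|\cdot\|$) both hold. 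Corollary~\ref{xi-estimate} then gives the zero-order bound $|\psi(g)|\leq C\Xi(g)(1+\log\|g\|)^{-d}$ for every $d$.

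For higher derivatives, apply the Leibniz rule to expand $R_yL_x\psi$ into a finite sum of products $(R_{y'}L_{x'}(\tilde\varphi\circ n))\cdot(R_{y''}L_{x''}f)$. The $f$-factor is again a derivative of a Whittaker Schwartz function to which Corollary~\ref{xi-estimate} applies; here $L_X$ for $X\in Lie(N_o)$ acts on $f$ as multiplication by $-d\chi(X)$, while the right $K$-finiteness of $f$ (preserved under $U(Lie(G))$ acting from the right) lets one handle $L_X$ for $X\in Lie(A_o)\oplus Lie(K)$ via the $K$-finite decomposition $f(g)=\chi(n(g))\sum_j\tau_j(k(g))f_j(a(g))$ and the standard identity $L_Xh(g)=R_{Ad(g^{-1})X}h(g)$. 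The $(\tilde\varphi\circ n)$-factor is computed by taking the first-order Iwasawa decomposition of $\exp(-tX)g$ and $g\exp(tY)$: the resulting derivative of $n(\cdot)$ is a tangent vector of the form $n(g)\cdot Ad(a(g))Z$ with $Z$ the $Lie(N_o)$-projection of $Ad(k(g))X$, producing compactly supported derivatives of $\tilde\varphi$ evaluated at $n(g)$, bounded functions of $k(g)$, and scalar factors of the form $e^{\beta(\log a(g))}$ for various roots $\beta$.

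The main obstacle is these root-exponential factors $e^{\beta(\log a)}$, which can grow arbitrarily in the positive Weyl chamber and threaten to overwhelm any polynomial decay from $(1+\log\|a\|)^{-d}$. The resolution is Lemma~\ref{N-estimate}, which for generic $\chi$ provides the refined estimate $|xf(\exp(h)k)|\leq e^{-\sum_i m_i\alpha_i(h)}e^{\rho_o(h)}(1+\|h\|)^{-d}\,q_{\{m_i\},d,x}(f)$ for any choice of nonnegative integers $m_i$. In the positive chamber one selects each $m_i$ large enough that $e^{-\sum m_i\alpha_i(h)}$ dominates the worst $e^{\beta(\log a)}$; in the remaining Weyl chambers the factor $e^{\rho_o(h)}$ together with the Weyl-symmetry of $\Xi$ already provides enough decay to match $\Xi(g)$. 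Patching the finitely many chamber-wise bounds gives $|R_yL_x\psi(g)|\leq C_k\Xi(g)(1+\log\|g\|)^{-k}$ for every $k$, yielding $\psi\in\mathcal{C}(G)$.
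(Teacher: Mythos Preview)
Your proposal is correct and follows essentially the same route as the paper: reduce to $n(g)$ in a fixed compactum so that $\Xi(g)\asymp\Xi(a(g))$ and $(1+\log\|g\|)\asymp(1+\log\|a(g)\|)$, compute left and right derivatives through the Iwasawa decomposition (the paper does this directly rather than via your Leibniz--rule packaging, but the resulting terms are identical), and then absorb the dangerous factors $a^{\beta}$ produced by $Ad(a)$ using Lemma~\ref{N-estimate} together with Corollary~\ref{xi-estimate}. The only cosmetic difference is that the paper handles $L_X$ by expanding $Ad(n)^{-1}X$ in a PBW basis and reducing negative root vectors via $Y=-\theta Y+(Y+\theta Y)$, whereas you invoke $L_Xh(g)=R_{Ad(g^{-1})X}h(g)$ and the $K$--finite decomposition of $f$; both routes land on the same estimate and both tacitly use that $\chi$ is generic.
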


To prove this result we will use the following lemma.

\begin{lemma}
Let $\psi\in C^{\infty}(G)$ be such that
\[
\psi(nak)=\sum_{i=1}^{a}\sum_{j=1}^{b}\varphi_{i}(n)f_{ij}(a)\gamma
_{j}(k),n\in N_{o},a\in A_{o},k\in K.
\]
Assume that $\varphi_{i}\in C_{c}^{\infty}(N_{o}),\gamma_{i}\in C^{\infty}(K)$
and $f_{ij}\in C^{\infty}(A_{o})$ satisfies
\[
\left\vert uf_{ij}(a)\right\vert \leq C_{u,r,d}a^{-\sum r_{i}\alpha_{i}%
}\left\Vert a\right\Vert ^{-1}(1+\log\left\Vert a\right\Vert )^{-d}%
\]
for all $u\in U(\mathfrak{a}_{o})$ and $r_{i}\in\mathbb{Z}_{\geq0}$. Then
\[
\left\vert \psi(g)\right\vert \leq C_{d}\left\Vert g\right\Vert ^{-1}%
(1+\log\left\Vert g\right\Vert )^{-d}%
\]
and if $X\in Lie(G)$ then both $R_{X}\psi$ and $L_{X}\psi$ have decompositions
as above relative to the Iwasawa decomposition.
\end{lemma}

\begin{proof}
Let $\Omega$ be the union of the supports of the $\varphi_{i}$. There exist
constants $c_{3}$ and $c_{4}$ such that if $n\in\Omega$ then
\[
\left\Vert n\right\Vert \leq c_{3},\left\vert \varphi_{i}(n)\right\vert \leq
c_{4},1\leq i\leq d.
\]
Thus if $a\in A_{o}$ then
\[
\left\Vert na\right\Vert \leq\left\Vert n\right\Vert \left\Vert a\right\Vert
\leq c_{3}\left\Vert a\right\Vert
\]
and
\[
\left\Vert a\right\Vert =\left\Vert n^{-1}na\right\Vert \leq\left\Vert
n^{-1}\right\Vert \left\Vert na\right\Vert \leq c_{3}\left\Vert na\right\Vert
.
\]
Thus, since $\left\Vert gk\right\Vert =\left\Vert g\right\Vert $ for $k\in K$,
for each $d$ there exists a constant, $B_{d\text{,}}$ such that
\[
\left\vert \psi(nak)\right\vert \leq c_{4}B_{d}\left\Vert a\right\Vert
^{-1}(1+\log\left\Vert a\right\Vert )^{-d}\leq
\]%
\[
c_{4}c_{3}B_{d}\left\Vert nak\right\Vert ^{-1}(1+\log\left\Vert nak\right\Vert
)^{-d}.
\]
We now prove that each of $L_{X}\psi(g)$ and $R_{X}\psi(g)$ satisfy the
hypotheses of the lemma. We first consider the right derivatives. If $X\in
Lie(G)$ then
\[
R_{X}\psi(nak)=\frac{d}{dt}_{t=0}\psi(na\exp(tAd(k)X)k)
\]
we choose a basis of $Lie(G)$, $X_{1},...,X_{n}$ with $X_{i}$ in the
$\beta_{i}$ root space for the action of $\mathfrak{a}_{o}$ on $Lie(N_{o})$ if
$i\leq r$, $X_{i}\in\mathfrak{a}_{o}$ for $r<i\leq r+l$ and $X_{i}\in Lie(K)$
for $i>r+l$. Then
\[
Ad(k)X=\sum_{i}c_{i}(k,X)X_{i}\text{.}%
\]
Hence
\[
R_{X}\psi(nak)=\sum_{i}c_{i}(k,X)\frac{d}{dt}_{t=0}\psi(na\exp(tX_{i})ak).
\]
It is enough to prove that each of the functions
\[
\frac{d}{dt}_{t=0}\psi(na\exp(tX_{p})k)
\]
satisfies the hypotheses. If $p\leq r$ then
\[
\frac{d}{dt}_{t=0}\psi(na\exp(tX_{p})k)=\frac{d}{dt}_{t=0}\psi(n\exp
(tAd(a)X_{p})k)
\]%
\[
=a^{\beta_{p}}\sum\frac{d}{dt}_{t=0}\varphi_{i}(n\exp(X_{p}))f_{ij}%
(a)\gamma_{j}(k).
\]
Which is of the right form since $\beta_{i}=\sum s_{ij}\alpha_{j}$ with
$s_{ij}\in\mathbb{Z}_{\geq0}$. If $r<p\leq r+l$ then
\[
\frac{d}{dt}_{t=0}\psi(na\exp(tX_{p})k)=\sum_{i,j}\gamma_{j}(k)\varphi
_{i}(n)\frac{d}{dt}_{t=0}f_{ij}(a\exp(tX_{p})).
\]
which is of the right form. Note that the decomposition is obvious if $p>r+l$.

We now take a basis $Y_{i}=X_{i}$, $i\leq r+l$ and $Y_{i}\ $in the $-\beta
_{i}$ root space with $\beta_{i}$ as above $i>r+l.$ We now look at the left
derivative. Noting that
\[
Ad(n)^{-1}X=\sum_{i}d_{i}(n,X)Y_{i}%
\]%
\[
L_{X}\psi(nak)=\sum d_{p}(n,X)\frac{d}{dt}_{t=0}\psi(n\exp(tY_{p})ak).
\]
If \ $p\leq r+l$ then it is obvious that
\[
\frac{d}{dt}_{t=0}\psi(n\exp(tY_{p})ak)
\]
has the desired decomposition. Suppose that $p>r+l$ then
\[
\frac{d}{dt}_{t=0}\psi(n\exp(tY_{i})ak)=a^{\beta_{i}}\frac{d}{dt}_{t=0}%
\psi(na\exp(tY_{i})k).
\]
Also $Y_{i}=-\theta Y_{i}+(Y_{i}+\theta Y_{i})$ and $-\theta Y_{i}$is in the
$\beta_{i}$ root space and $Z_{i}=Y_{i}+\theta Y_{i}$ $\in Lie(K)$. So
\[
\frac{d}{dt}_{t=0}\psi(n\exp(tY_{i})ak)=a^{2\beta_{i}}\frac{d}{dt}_{t=0}%
\psi(n\exp(-t\theta Y_{i})ak)+a^{\beta_{i}}\frac{d}{dt}_{t=0}\psi
(na\exp(tZ_{i})k).
\]
We leave the rest to the reader.
\end{proof}

\begin{proof}
(of the theorem)
\[
\psi(nak)=\varphi(n)f(ak)
\]
since $f$ is $K$--finite $f(gk)=\sum f_{i}(g)\gamma_{i}(k)$ with $\gamma
_{i}\in C^{\infty}(K)$ and $f_{i}\in\mathcal{C}(N_{o}\backslash G;\chi) $.
Corollary \ref{xi-estimate} \ implies that $\psi$ satisfies the hypotheses of
the previous lemma. An iteration of the lemma implies the theorem.
\end{proof}

\begin{theorem}
\label{surjective}Assume that $\chi$ is generic. Let $\mathcal{C}(G)_{K}$
(resp. $\mathcal{C}(N_{o}\backslash G;\chi)_{K}$) denote the right $K$ finite
elements of $\mathcal{C}(G)$ (resp. $\mathcal{C}(N_{o}\backslash G;\chi)$).
Let $T_{\chi}$be as in Theorem \ref{Fourier} then the map
\[
T_{\chi}:\mathcal{C}(G)_{K}\rightarrow\mathcal{C}(N_{o}\backslash G;\chi)_{K}%
\]
is surjective.
\end{theorem}

\begin{proof}
Let $f\in\mathcal{C}(N_{o}\backslash G;\chi)_{K}$ and let $\varphi\in
C_{c}^{\infty}(N_{o})$ be such that
\[
\int_{N_{o}}\chi(n)^{-1}\varphi(n)dn=1.
\]
Then define $\psi(nak)=\varphi(n)f(ak)$ or $n\in N_{o},a\in A_{o}$ and $k\in
K$. We have seen that $\psi\in\mathcal{C}(G)_{K}$. Also
\[
\psi_{\chi}(nak)=\chi(n)f(ak)=f(nak).
\]

\end{proof}

We will use the following result later in this paper.

\begin{lemma}
\label{compact-estimate}Let $\omega$ be a compact subset of $A_{o}$ and let
$P={}^{o}M_{P}A_{P}N_{P}$ be a standard parabolic subgroup with standard
Langlands decomposition. If $\mu\in\mathfrak{a}_{P}^{\ast}$ satisfies
$(\mu,\alpha)\geq0$ for all $\alpha\in\Phi(P,A_{P})$ then there exists a
constant $C_{\mu,\omega}>0$ such that if $\bar{n}am\in N_{o}\omega K$ with
$\bar{n}\in\bar{N}_{P},a\in A_{P},m\in{}^{o}M_{P}$ and $k\in K$ then $a^{\mu
}\geq C_{\mu,\omega}$.
\end{lemma}

\begin{proof}
The set of $\mu$ in the statement is the convex hull of the elements
$\lambda\in\mathfrak{a}^{\ast}$ obtained as follows:Let $F$ be a finite
dimensional representation of $G$ such that $^{o}M_{P}$ acts trivially on
$F^{N_{P}}$ and $a\in A_{P}$ acts by $a^{\lambda}$ on $F^{N_{P}}$. It is
therefore sufficient to prove the result for such $\lambda$ with $F$ the
corresponding representation. Put a $K$--invariant inner product on $F$ such
that the elements of $\mathfrak{a}_{o}$ act as Hermitian operators. Let
$\left\Vert ...\right\Vert $ be the corresponding norm on $F$. Let $v_{o}$ be
a unit vector in $F^{N_{o}}=F^{N_{P}}$. There exist constants $C_{1},C_{2}$
such that if $u\in\omega$ then
\[
0<C_{1}\leq\left\Vert uv_{o}\right\Vert \leq C_{2}<\infty.
\]
Thus, if $\bar{n}\in\bar{N},a\in A_{P},m\in{}^{o}M_{P}$, $n\in N,u\in
\omega,k\in K$ and if $\bar{n}am=nuk$ then
\[
\left\Vert \left(  \bar{n}am\right)  ^{-1}v_{o}\right\Vert =\left\Vert
u^{-1}v_{o}\right\Vert \leq C_{2}.
\]
Also,
\[
C_{2}\geq\left\Vert \left(  \bar{n}am\right)  ^{-1}v_{o}\right\Vert
=\left\Vert a^{-1}(am)\bar{n}(am)^{-1})v_{o}\right\Vert \geq\left\Vert
a^{-1}v_{o}\right\Vert =a^{-\lambda}.
\]
Here, we have used the fact that if $\bar{n}\in\bar{N}$ then $\bar{n}%
v_{o}=v_{o}+z$ with $\left\langle v_{o},Az\right\rangle =0$.
\end{proof}

\section{The space of Whittaker cusp forms}

We retain the notation of the previous sections.

If $f\in\mathcal{C(}N_{o}\backslash G;\chi)$ then we say that $f$ is a cusp
form if $\left(  R_{k}f\right)  ^{P}=0$ for all proper standard parabolic
subgroups $P$ and all $k\in K$. We leave the following lemma as an exercise

\begin{lemma}
If $f$ is a cusp form then $\left(  R_{g}f\right)  ^{P}=0$ for all proper
standard parabolic subgroups $P$ and all $g\in G$.
\end{lemma}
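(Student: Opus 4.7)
The plan is to fix a proper standard parabolic $P$ and show that
\[
S := \{g \in G : (R_g f)^P \equiv 0\}
\]
equals all of $G$. The hypothesis that $f$ is a cusp form gives $K \subset S$, and by the Iwasawa-type identity $G = \bar{P} K$, it suffices to prove that $S$ is stable under left multiplication by $\bar{P}$. Using the Langlands decomposition $\bar{P} = M_P \bar{N}_P$, this further reduces to verifying stability under $M_P$ and under $\bar{N}_P$ separately.

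For the $\bar{N}_P$ step, let $g' \in \bar{N}_P$ and $x = ma$ with $m \in {}^oM_P$, $a \in A_P$. Since $M_P$ is the Levi of $\bar{P}$ with unipotent radical $\bar{N}_P$, the subgroup $M_P$ normalizes $\bar{N}_P$, and as $A_P \subset M_P$ we conclude $\bar{n}_1 := x g' x^{-1} \in \bar{N}_P$. Hence $\bar{n} x g' g = \bar{n}\bar{n}_1 x g$, and the translation $\bar{n} \mapsto \bar{n}\bar{n}_1^{-1}$ in the integral defining $(R_{g'g} f)^P$---legitimate by the absolute convergence provided by Theorem \ref{15.3.Replace}---yields $(R_{g'g} f)^P(x) = (R_g f)^P(x)$. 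For the $M_P$ step, write $g' = m_0 a_0$ with $m_0 \in {}^oM_P$ and $a_0 \in A_P$. Since $A_P$ is central in $M_P$, we have $m a g' = (mm_0)(a a_0)$, so
\[
(R_{g'g} f)^P(ma) = a^{\rho_P}\int_{\bar{N}_P} f(\bar{n}(mm_0)(a a_0) g)\,d\bar{n} = a_0^{-\rho_P}\,(R_g f)^P\bigl((mm_0)(aa_0)\bigr),
\]
which makes it manifest that $(R_g f)^P \equiv 0$ forces $(R_{g'g} f)^P \equiv 0$.

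Combining these two invariances shows $\bar{P}\cdot S \subset S$, and with $K \subset S$ and $G = \bar{P}K$ we obtain $S = G$, proving the claim for this $P$; repeating the argument for each proper standard parabolic completes the lemma. The proof is entirely formal, relying only on the normalization structure of parabolic subgroups; the sole analytic ingredient is the absolute convergence required to perform the change of variable on $\bar{N}_P$, which is exactly what Theorem \ref{15.3.Replace} supplies. The only step that is even mildly subtle is identifying the correct decomposition $\bar{P} = M_P \bar{N}_P$ so that left translation by $\bar{N}_P$ passes straight through the integrand via conjugation by $x \in M_P$.
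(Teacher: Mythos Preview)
The paper does not give a proof of this lemma---it is explicitly ``left as an exercise''---so there is nothing to compare your argument against. Your argument is correct: the decomposition $G=\bar{P}K$ together with the cusp-form hypothesis $K\subset S$ reduces the claim to checking that $S$ is stable under left translation by $\bar{N}_P$ and by $M_P$, and both of these follow exactly as you write, the first by conjugating $g'\in\bar{N}_P$ through $ma\in M_P$ and using translation invariance of Haar measure on $\bar{N}_P$, the second by the centrality of $A_P$ in $M_P$. The only analytic input, absolute convergence so that the change of variables on $\bar{N}_P$ is legitimate, is indeed supplied by Theorem~\ref{15.3.Replace} once one knows $R_g f\in\mathcal{C}(N_o\backslash G;\chi)$, which is a routine consequence of the seminorm definitions.
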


We set $^{o}\mathcal{C(}N_{o}\backslash G;\chi)$ equal to the space of cusp
forms in $\mathcal{C(}N_{o}\backslash G;\chi)$. Here is the analogue of
Theorem \ref{Z-finite} \ in this context. Due to the error in \cite{RRGII}
this result was also left without a proof.

\begin{theorem}
If $f\in\mathcal{C(}N_{o}\backslash G;\chi)$ and $\dim Z(Lie(G))f<\infty$ then
$f$ is a cusp form.
\end{theorem}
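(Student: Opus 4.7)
The strategy is to adapt Harish-Chandra's proof of Theorem \ref{Z-finite} using the Whittaker-adapted tools built in this section. Since the hypothesis is preserved under right translation by $K$, it suffices to fix a proper standard parabolic $P={}^{o}M_{P}A_{P}N_{P}$ and show that the constant term $f^{P}$ vanishes identically.

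The first step is to transfer $Z(Lie(G))$-finiteness of $f$ to $Z(Lie(M_{P}))$-finiteness of $f^{P}$. Using the triangular decomposition $U(Lie(G))=U(\bar{\mathfrak{n}}_{P})U(\mathfrak{m}_{P})U(\mathfrak{n}_{P})$, each $z\in Z(Lie(G))$ admits a decomposition $z=\gamma_{P}(z)+r$ in which $\gamma_{P}(z)$ lies in $U(\mathfrak{m}_{P})$ and $r$ is a sum of monomials with either a left factor in $\bar{\mathfrak{n}}_{P}$ or a right factor in $\mathfrak{n}_{P}$. Monomials of the former kind integrate to zero over $\overline{N}_{P}$ by translation invariance of the measure, while those of the latter kind collapse to scalar operators after converting the right-action of $\mathfrak{n}_{P}\subset N_{o}$ into $d\chi$ via the Whittaker transformation law (this is where genericity of $\chi$ is essential). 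Absolute convergence of all integrals involved follows from Theorem \ref{15.3.Replace} applied to the derivatives of $f$, which remain in $\mathcal{C}(N_{o}\backslash G;\chi)$. The outcome is the identity $(zf)^{P}=\gamma_{P}(z)f^{P}$, and since $\gamma_{P}$ maps $Z(Lie(G))$ into $Z(Lie(M_{P}))$ and makes the latter a finite module over its image, $f^{P}$ is $Z(Lie(M_{P}))$-finite.

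Next, because $\mathfrak{a}_{P}$ lies in the center of $\mathfrak{m}_{P}$, we have $S(\mathfrak{a}_{P})\subset Z(Lie(M_{P}))$. Hence for each fixed $m_{0}\in{}^{o}M_{P}$ the function $a\mapsto f^{P}(m_{0}a)$ on $A_{P}$ solves a constant-coefficient system of finite codimension and is therefore a finite exponential polynomial $\sum_{i}p_{i}(\log a)\,a^{\lambda_{i}}$ with distinct exponents $\lambda_{i}\in(\mathfrak{a}_{P,\mathbb{C}})^{\ast}$. Corollary \ref{goodenough} supplies the super-polynomial decay $|f^{P}(\exp(H)m_{0})|\leq C_{d}(1-\rho(H))^{-d}a(m_{0})^{\rho}$ on the half-space $\rho(H)\leq 0$, and Theorem \ref{15.3.Replace} places $f^{P}$ in $\mathcal{B}(N_{o}\cap M_{P}\backslash M_{P};\chi|_{N_{o}\cap M_{P}})$, giving Schwartz decay in the ${}^{o}M_{P}$-directions. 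Linear independence of the characters $a\mapsto a^{\lambda_{i}}$ combined with these bounds forces severe constraints on the admissible exponents.

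The main obstacle will be the last step: upgrading the half-space decay into the full vanishing $f^{P}\equiv 0$, rather than only constraining the possible $\lambda_{i}$. This is the Whittaker analogue of Harish-Chandra's weak-constant-term argument. I plan to close it by applying the same analysis to every derivative $xf$ with $x\in U(Lie(G))$ (which remains a $Z$-finite element of $\mathcal{C}(N_{o}\backslash G;\chi)$) and by combining Corollary \ref{goodenough} with Corollary \ref{xi-estimate}, whose proof rests on Lemma \ref{N-estimate} and the genericity of $\chi$. The resulting family of independent decay constraints in all directions of $\mathfrak{a}_{P}$ should eliminate every admissible exponent $\lambda_{i}$, forcing each $p_{i}\equiv 0$ and hence $f^{P}=0$.
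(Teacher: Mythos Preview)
Your strategy follows the paper's closely up to the point where you obtain that $a\mapsto f^{P}(m_{0}a)$ is an exponential polynomial, but the final step---the one you flag as ``the main obstacle''---has a genuine gap, and the fix you propose is not the right one.

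The key observation you are missing is that membership in $\mathcal{B}(N_{o}\cap M_{P}\backslash M_{P};\chi_{|N_{o}\cap M_{P}})$ does more than give Schwartz decay in the ${}^{o}M_{P}$-directions: it also bounds $|f^{P}(\exp(tH)m)|$ by a \emph{polynomial} in $|t|$ for \emph{both} signs of $t$ (look at the exponent $d+k$ in the definition of $q_{d,k,x}$). A one-variable exponential polynomial $\sum_{j}e^{\mu_{j}t}p_{j}(t)$ with at most polynomial growth as $t\to\pm\infty$ must have all $\mu_{j}$ purely imaginary, say $\mu_{j}=i\nu_{j}$. Once the exponents are imaginary, Corollary~\ref{goodenough} gives $\bigl|\sum_{j}e^{i\nu_{j}t}p_{j}(t)\bigr|\le C_{d}(1+|t|)^{-d}$ for all $d>0$ along the single ray $\rho(H)>0$, $t\to -\infty$, and a purely oscillatory exponential polynomial with super-polynomial decay along one ray is identically zero. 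That is the whole argument; no decay ``in all directions of $\mathfrak{a}_{P}$'' is needed or available.

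Two smaller remarks. First, your claim that genericity of $\chi$ is ``essential'' in Step~1 is incorrect: converting the left $\mathfrak{n}_{P}$-action to $d\chi$ uses only the transformation law $f(ng)=\chi(n)f(g)$, which holds for any $\chi$; the paper's proof of this theorem never invokes genericity. Second, the paper sidesteps the computation with $\gamma_{P}$ entirely by first reducing to $K$-finite $f$ and then using that the Jacquet module $V/\bar{\mathfrak{n}}_{P}V$ of $V=U(\mathfrak{g})\operatorname{Span}_{\mathbb{C}}R_{K}f$ is an admissible finitely generated $(\mathfrak{m}_{P},K\cap M_{P})$-module, which immediately yields the exponential-polynomial behavior along~$A_{P}$; this avoids the left/right bookkeeping in your PBW argument.
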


\begin{proof}
It is enough to prove that $f_{\gamma}^{P}=0$ for all standard parabolic
subgroups and all $\gamma\in\hat{K}$. So we will assume $f\ $\ is $K$-finite.
We may assume that the center of $G$ is compact. If $P$ is a parabolic
subgroup of $G$ we have $\bar{P}={}^{o}M_{P}A_{P}\bar{N}_{P}$. We note that
\[
V=U(Lie(G))Span_{\mathbb{C}}R_{K}f
\]
is a finitely generated, admissible $(Lie(G),K)$--module. We also note that if
$\phi\in Lie(\bar{N}_{P})V$ then $\phi^{P}=0$. Also $V/Lie(\bar{N}_{P})V $ is
an admissible, finitely generated $(Lie(^{o}M_{P}),{}^{o}M_{P}\cap K)$--module
(\cite{RRGI} Corollary 3.7.2) so $\dim Span_{\mathbb{C}}(A_{P}u)<\infty$ if
$u\in V/Lie(\bar{N}_{P})V$. This implies that $f^{P}(\exp(tH)m)$ is an
exponential polynomial in $t$ for each $m\in{}^{o}M_{P}$ and $H\in Lie(A_{P})$
i.e.
\[
f^{P}(\exp(tH)m)=\sum_{j}e^{\mu_{j}t}p_{j}(t,m)
\]
with $p_{j}(t,m)$ a polynomial in $t$ and $\mu_{j}\in\mathbb{C}$ distinct.
But
\[
f^{P}\in\mathcal{B(}N_{o}\cap M_{p}\backslash M_{P};\chi_{|N_{o}\cap M_{p}})
\]
which implies that there exists $r$ and $C$ such that for all $t$
\[
\left\vert f^{P}(\exp(tH)m)\right\vert \leq C(1+\left\vert t\right\vert
)^{r}.
\]
Hence $\mu_{j}=i\nu_{j}$ with $\nu_{j}\in\mathbb{R}$. \ Now Corollary
\ref{goodenough} implies that if $\rho(H)>0$ and $t>0$ and $d>0$ than there
exists $C_{m,d}$ such that
\[
\left\vert \sum_{j}e^{-i\nu_{j}t}p_{j}(-t,m)\right\vert \leq C_{m,d}%
(1+t)^{-d}.
\]
Thus $\sum_{j}e^{-i\nu_{j}t}p_{j}(t,m)=0$ for all $t$ which implies the theorem.
\end{proof}

\begin{theorem}
\label{Discrete+Cuspform}Assume that $\chi$ is generic and $G$ has compact
center. If $H\subset L^{2}(N_{o}\backslash G;\chi)$ is a closed, invariant,
irreducible subspace then $H^{\infty}\subset{}^{o}\mathcal{C}(N_{o}\backslash
G;\chi)$.
\end{theorem}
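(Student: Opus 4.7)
The plan is to derive the theorem from the preceding one, by showing two things: (i) every $f\in H^\infty$ is $Z(Lie(G))$-finite, and (ii) $H^\infty\subset\mathcal{C}(N_o\backslash G;\chi)$.

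For (i), since $H$ is an irreducible unitary representation of $G$, the $(Lie(G),K)$-module of its $K$-finite smooth vectors is admissible and irreducible, so Schur's lemma forces $Z(Lie(G))$ to act on it by a single character. By continuity of the $U(Lie(G))$-action on the Fr\'echet space $H^\infty$ and density of the $K$-finite vectors, the same character acts on all of $H^\infty$, so $\dim Z(Lie(G))f\leq1$ for each $f\in H^\infty$. Once (ii) is established, the preceding theorem immediately gives that every $f\in H^\infty$ is a cusp form.

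For (ii) the plan is to use Dixmier--Malliavin: any $f\in H^\infty$ may be written as a finite sum $f=\sum_i\pi(\phi_i)w_i$ with $\phi_i\in C_c^\infty(G)\subset\mathcal{C}(G)$ and $w_i\in H\subset L^2(N_o\backslash G;\chi)$. It therefore suffices to prove that $\pi(\phi)w\in\mathcal{C}(N_o\backslash G;\chi)$ whenever $\phi\in\mathcal{C}(G)$ and $w\in L^2(N_o\backslash G;\chi)$. Substituting $h'=gh$ in
\[
(\pi(\phi)w)(g)=\int_G\phi(h)w(gh)\,dh,
\]
decomposing $h'=nak$ by Iwasawa, and using the $\chi$-equivariance of $w$ to carry out the $N_o$-integration first, one obtains
\[
(\pi(\phi)w)(g)=\int_{N_o\backslash G}T_{\bar\chi}(L_g\phi)(\bar h)\,w(\bar h)\,d\bar h,
\]
where $T_{\bar\chi}$ is the transform of Proposition~\ref{Fourier} for the still-generic conjugate character $\bar\chi=\chi^{-1}$ and $L_g\phi(x)=\phi(g^{-1}x)$. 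Cauchy--Schwarz then yields
\[
|(\pi(\phi)w)(g)|\leq\|w\|_{L^2}\cdot\bigl\|T_{\bar\chi}(L_g\phi)\bigr\|_{L^2(N_o\backslash G;\bar\chi)}.
\]
Derivatives $R_yL_x\pi(\phi)w$ are handled by the same procedure with $\phi$ replaced by appropriate derivatives of $\phi$ (modulo absorbable $\mathrm{Ad}(g)$-twists from the non-commutation of $L_g$ with $L_x$), so every seminorm $q_{d,x}(\pi(\phi)w)$ reduces to an analogous $L^2$ estimate on a translate of a derivative of $\phi$.

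The main obstacle is showing that $\bigl\|T_{\bar\chi}(L_g\phi)\bigr\|_{L^2(N_o\backslash G;\bar\chi)}$ decays like $a(g)^\rho(1+\log\|a(g)\|)^{-d}$ for every $d>0$. Left translation is only unitary on $L^2(G)$, so this decay must be extracted from the Schwartz decay of $\phi$ itself. The strategy is to insert the estimate $|\phi(x)|\leq p_{r,1,1}(\phi)\Xi(x)(1+\log\|x\|)^{-r}$, expand $\Xi(g^{-1}nak)$ using property~6 of $\Xi$, and then apply Proposition~\ref{basic} to the minimal parabolic $P_o$ (for which $^oM_{P_o}$ is compact, simplifying the estimate) in order to reduce the $N_o$-integral to a quantity decaying in the Iwasawa $A_o$-parameter of $g$. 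Combining with Lemma~\ref{Ltwo} to estimate the $L^2$ norm by a continuous seminorm of the Whittaker Schwartz space then completes the bound.
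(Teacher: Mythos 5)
Your step (i) is fine, and combining it with the preceding theorem is indeed how the paper concludes. The gap is in step (ii). After Dixmier--Malliavin you reduce to the claim that $\pi(\phi)w\in\mathcal{C}(N_{o}\backslash G;\chi)$ for \emph{every} $\phi\in C_{c}^{\infty}(G)$ and \emph{every} $w\in L^{2}(N_{o}\backslash G;\chi)$. This discards the irreducibility of $H$, and the resulting claim is false: for $w$ lying in the continuous part of the spectral decomposition of $L^{2}(N_{o}\backslash G;\chi)$, the function $\pi(\phi)w$ is a wave packet with merely square-integrable, non-smooth spectral data; it is $O(a(g)^{\rho})$ but does not have the rapid decay $a(g)^{\rho}(1+\log\Vert a(g)\Vert)^{-d}$ demanded by the seminorms $q_{d,x}$. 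Your own estimate cannot do better: writing $\Phi=\phi\ast\phi^{\ast}\in\mathcal{C}(G)$ and $g=n_{0}a_{0}k_{0}$, an unfolding gives
\[
\Vert T_{\bar{\chi}}(L_{g}\phi)\Vert_{L^{2}(N_{o}\backslash G;\bar{\chi})}^{2}=\int_{N_{o}}\chi(n)\Phi(g^{-1}ng)\,dn=a_{0}^{2\rho}\int_{N_{o}}\chi(a_{0}na_{0}^{-1})\Phi(k_{0}^{-1}nk_{0})\,dn,
\]
and as $a_{0}$ goes to infinity in the chamber where $\mathrm{Ad}(a_{0})$ contracts $N_{o}$ (precisely where $a_{0}^{\rho}\rightarrow0$ and the extra factor $(1+\log\Vert a_{0}\Vert)^{-d}$ is a genuine constraint) the character $n\mapsto\chi(a_{0}na_{0}^{-1})$ tends to the trivial character, so the last integral tends to $\int_{N_{o}}\Phi(k_{0}^{-1}nk_{0})\,dn$, which is nonzero for, e.g., $\phi\geq0$. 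Hence $\Vert T_{\bar{\chi}}(L_{g}\phi)\Vert\sim c\,a(g)^{\rho}$ with $c>0$; and no estimate using only $|\phi|\leq C\,\Xi\cdot(1+\log\Vert\cdot\Vert)^{-r}$ (which is all your proposed use of property 6 of $\Xi$ and Proposition \ref{basic} exploits) can beat this, since that is exactly the case $\chi=1$, $\phi\geq0$ of the identity above. The decay you need is an oscillation/genericity phenomenon, and it degenerates in exactly the critical directions.

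The missing content is that the extra decay comes from $H$ being an irreducible (hence discrete) summand, not from convolution smoothing. The paper's proof goes through the $K$-finite vectors first: Theorem 15.3.5 of \cite{RRGII} shows $H_{K}^{\infty}\subset{}^{o}\mathcal{C}(N_{o}\backslash G;\chi)$, using the admissible irreducible $(Lie(G),K)$-module structure, and the passage from $H_{K}^{\infty}$ to all of $H^{\infty}$ is then done by the Casselman--Wallach theorem (the completions of $H_{K}^{\infty}$ in $L^{2}(N_{o}\backslash G;\chi)^{\infty}$ and in $\mathcal{C}(N_{o}\backslash G;\chi)$ coincide), after which the preceding theorem gives the cusp-form property as in your step (i). Some argument at the level of the irreducible Harish-Chandra module has to replace your step (ii).
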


\begin{proof}
In \cite{RRGII} Theorem 15.3.5 a proof is given that
\[
H_{K}^{\infty}\subset{}^{o}\mathcal{C}(N_{o}\backslash G;\chi).
\]
The completion of $H_{K}^{\infty}$ in $L^{2}(N_{o}\backslash G;\chi)^{\infty}$
is a smooth Fr\'{e}chet representation of moderate growth as is the completion
in $\mathcal{C}(N_{o}\backslash G;\chi)$. The Casselman-Wallach theorem
\cite{RRGII},11.6.7 implies that the two completions are the same. Now the
previous theorem implies this theorem.
\end{proof}

In order to carry out the rest of the theory of Whittaker cusp forms we need
to recall Harish-Chandra's decomposition of the Schwartz space of $G$ and our
results on the analytic continuation of Jacquet integrals.

\section{The Harish-Chandra Plancherel Theorem.}

Throughout this section we will assume that $G$ has compact center. The
purpose of this section is to describe Harish-Chandra's decomposition of
$\mathcal{C}(G)$ relative to conjugacy classes of associate standard parabolic
subgroups of $G$. If $P_{1}$ and $P_{2}$ are parabolic subgroups of $G$ then
they are associate if there exists $g\in G$ with $gM_{P_{1}}g^{-1}=M_{P_{2}}$.
A parabolic subgroup, $P$, of $G$ is said to be cuspidal if $^{o}M_{P}$
contains a compact Cartan subgroup. Let $\mathcal{P}=\mathcal{P}(G)$ denote
the set of conjugacy classes of associate cuspidal parabolic subgroups. Then
one can show that up to conjugacy all Cartan subgroups of $G$ can be obtained
from elements of $\mathcal{P}$ as $[P]\longmapsto T_{P}A_{P}$ where $P$ is a
representative of $[P]$ and $T_{P}$ is a compact Cartan subgroup of $M_{P}$.
If we further, divide by $M_{P}$ conjugacy then the correspondence is
bijective. Thus we can either talk about the set of all conjugacy classes of
Cartan subgroups of $G$ or the set of associativity classes of cuspidal,
standard parabolic subgroups of $G$.

The distributional form Harish-Chandra Plancherel theorem is (we will explain
the notation after the statement).

\begin{theorem}
\label{plancherel}(c.f. \cite{RRGII} Theorem 13.4.1) Let $\delta$ be the Dirac
distribution at the identity element $e$ in $G$. If $f\in\mathcal{C}(G) $
then
\[
\delta(f)=\sum_{[P]\in\mathcal{P}}\sum_{[\sigma]\in\mathcal{E}_{2}({}^{o}%
M_{P})}d(\sigma)\int_{\mathfrak{a}_{P}^{\ast}}\Theta_{P,\sigma.i\nu}%
(f)\mu(\sigma,\nu)d\nu
\]
with $d(\sigma)$ the formal degree of $\sigma$.
\end{theorem}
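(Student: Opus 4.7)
The plan is to reduce the statement to a wave-packet inversion formula, one associativity class of cuspidal parabolics at a time, and then evaluate at the identity element. First I would invoke Harish-Chandra's decomposition
$$\mathcal{C}(G) = \bigoplus_{[P] \in \mathcal{P}} \mathcal{C}_{[P]}(G),$$
where $\mathcal{C}_{[P]}(G)$ is cut out by the vanishing of the constant terms $f^{Q}$ for all standard cuspidal parabolics $Q$ not associate to $P$. This decomposition rests on the continuity of $f \mapsto f^{P}$ (Theorem on page 6) together with Theorem \ref{cuspmainG} identifying $Z(Lie(G))$-finite Schwartz functions with cusp forms. Since distinct associativity classes contribute disjoint tempered spectra, the formula will hold if, for each $[P]$ and each $f \in \mathcal{C}_{[P]}(G)$, one can show
$$f(e) = \sum_{\sigma \in \mathcal{E}_2({}^oM_P)} \int_{\mathfrak{a}_P^*} \Theta_{P,\sigma,i\nu}(f)\,\mu(\sigma,\nu)\,d\nu.$$

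Second, I would realize every $f \in \mathcal{C}_{[P]}(G)$ as a wave packet of Eisenstein integrals attached to discrete series of $^oM_P$. By Harish-Chandra's characterization of cusp forms on $^oM_P$ (Section 3), the matrix coefficient $f^{P}$ decomposes into discrete-series blocks; applying the matrix Paley--Wiener theorem inside each $K$-isotype gives a Schwartz function $\alpha(\sigma,\nu)$ on $\mathfrak{a}_P^{*}$ such that
$$f(g) = \sum_{\sigma \in \mathcal{E}_2({}^oM_P)} \int_{\mathfrak{a}_P^*} E(P,\sigma,i\nu)(g)\,\alpha(\sigma,\nu)\,\mu(\sigma,\nu)\,d\nu,$$
where $E(P,\sigma,i\nu)$ is the Eisenstein integral normalized so that its pairing against the discrete series part of $f^{P}$ recovers the Fourier coefficient $\alpha(\sigma,\nu)$. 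This step combines the analytic continuation of the standard intertwiners with the cusp form theory of Section 3 applied to the Levi factor.

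Third, and this is the key analytic step, I would identify $\alpha(\sigma,\nu)$ evaluated at $e$ with the induced character value $\Theta_{P,\sigma,i\nu}(f)$. This identification is forced by Harish-Chandra's Maass--Selberg relations together with the boundary behavior of the $c$-functions: the density $\mu(\sigma,\nu)$ is exactly the combination $(c(\sigma,i\nu)c(\sigma,-i\nu))^{-1}$ (up to the normalizing constants the author absorbs into $\mu$) that inverts the wave-packet transform. Specializing $g = e$ and collapsing the Eisenstein integral to the trace of $\pi_{P,\sigma,i\nu}(f)$ via the Iwasawa integration formula then produces the stated identity.

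The main obstacle is the second and third steps taken together: constructing the inverse of the wave-packet map and proving that the density which inverts it is precisely $\mu(\sigma,\nu)$. This requires the Maass--Selberg relations, uniform control of the singularities of $c$-functions on the imaginary axis, and the delicate Eisenstein integral estimates that justify interchanging the sum over $\sigma$ with the $\nu$-integral and with evaluation at $e$. These ingredients are established in \cite{Plancherel1}, \cite{Plancherel2}, \cite{Plancherel3} and expounded in \cite{RRGII} Chapter 13, so the proof at this level of exposition consists in citing them and tracing through the specialization to $g=e$.
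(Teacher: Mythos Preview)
The paper does not prove this statement at all: it is stated as a known result of Harish-Chandra, with the reference to \cite{RRGII} Theorem~13.4.1 and, implicitly, to \cite{Plancherel1}, \cite{Plancherel2}, \cite{Plancherel3}; the surrounding text only explains the notation and then calls the theorem ``one of the most important theorems of the twentieth century'' before moving on to consequences. Your proposal ultimately arrives at the same place---your final paragraph concedes that the argument ``consists in citing them and tracing through the specialization to $g=e$''---so in effect you and the paper agree that this is a black box imported from Harish-Chandra's work.

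That said, the first part of your write-up reads as if you intend to supply an independent proof, and there the sketch is too optimistic in a couple of places. Your ``second step,'' realizing an arbitrary $f\in\mathcal{C}_{[P]}(G)$ as a wave packet by applying a matrix Paley--Wiener theorem to $f^{P}$, is not how Harish-Chandra (or \cite{RRGII} Chapter~13) proceeds: the surjectivity of the wave-packet transform onto $\mathcal{C}_{[P]}(G)$ is a \emph{consequence} of the full Plancherel machinery, not an input one can invoke at the outset. Likewise your ``third step'' conflates two distinct facts---the Maass--Selberg relations yielding the density $\mu(\sigma,\nu)$, and the identification of the Eisenstein-integral Fourier coefficient at $e$ with the induced character---each of which requires substantial independent work (the functional equations for intertwining operators, the asymptotic expansions of Eisenstein integrals, and the explicit formulas for their constant terms). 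If you mean only to give a roadmap and then cite the references, that is exactly what the paper does; if you mean to give a self-contained argument, the second and third steps as written would not stand on their own.
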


Now for the explanations. The set $\mathcal{E}_{2}(^{o}M_{P})$ is the set of
equivalence classes of irreducible square integrable representations of
$^{o}M_{P},[\sigma]$ is the equivalence class of $\sigma$. $\Theta
_{P,\sigma.i\nu}$ is the character of the induced representation
$I_{P,\sigma,i\nu}$ initially defined for $f\in C_{c}^{\infty}(G)$ as
$\mathrm{tr}\left(  \pi_{P,\sigma,i\nu}(f)\right)  $ and shown by
Harish-Chandra to extend to a continuous functional on $\mathcal{C}(G)$.
$\mu(\sigma,\nu)$ is for each $\sigma$ a non-negative analytic function on
$\mathfrak{a}_{P}^{\ast}$ that is of polynomial growth in $\nu$. We should
note that the function $\mu$ depends on $[P]$ and the normalizations of the
Haar measures of all of the groups involved. Also, the parameter $[P]$ is the
conjugacy class of $A_{P}$ in Harish-Chandra \cite{Plancherel3} and
\cite{RRGII}.

This monumental achievement is one of the most important theorems of the
twentieth century. Harish-Chandra's steps leading to its proof led to the
deepest results on intertwining operators, square integrable representations,
regular singular differential equations, classification problems,... We will
need the following implication.

\begin{theorem}
\label{HCimplication} (c.f. \cite{RRGII}, Theorem 13.4.6 (1)) Let $P\in\lbrack
P]\in\mathcal{P}$. Let for $\sigma\in\mathcal{E}_{2}(^{o}M_{P})$,
$v,w\in\left(  I_{\sigma}\right)  _{K}$ and $\alpha$ in the (usual) Schwartz
space of $\mathfrak{a}_{P}^{\ast}$, $\mathcal{S(}\mathfrak{a}_{P}^{\ast}),$
\[
\varphi_{P,\sigma,\alpha,v,w}(g)=\int_{\mathfrak{a}_{P}^{\ast}}\left\langle
\pi_{P,\sigma,\iota\nu}(g)v,w\right\rangle \alpha(\nu)\mu(\sigma,\nu)d\nu.
\]
Then $\varphi_{P,\sigma,v,w}\in\mathcal{C}(G)$. Let $\mathcal{C}%
_{[P],[\sigma]}(G)$ denote the closure in $\mathcal{C}(G)$ of the span of
\[
\left\{  \varphi_{P,\sigma,\alpha,w}|v,w\in\left(  I_{\sigma}\right)
_{K},\alpha\in\mathcal{S(}\mathfrak{a}_{P}^{\ast})\right\}  .
\]
Then $\mathcal{C}(G)$ is the orthogonal (relative to the $L^{2}$--inner
product) direct sum
\[%
{\displaystyle\bigoplus\nolimits_{\lbrack P]\in\mathcal{P},\omega
\in\mathcal{E}_{2}(^{o}M_{P})}}
\mathcal{C}_{[P],[\omega]}(G).
\]

\end{theorem}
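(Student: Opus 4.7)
The plan is to establish three assertions in sequence: first, that each wave packet $\varphi_{P,\sigma,\alpha,v,w}$ lies in $\mathcal{C}(G)$; second, that the subspaces $\mathcal{C}_{[P],[\sigma]}(G)$ are mutually $L^{2}$-orthogonal for distinct pairs $([P],[[\sigma]])$; and third, that their closures together exhaust $\mathcal{C}(G)$. Since the theorem is the wave-packet repackaging of the Plancherel formula stated just before it, the bulk of the work lies in the first and third steps, while orthogonality drops out from the Maass--Selberg relations.

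For membership in $\mathcal{C}(G)$, the matrix coefficient $g\mapsto\langle\pi_{P,\sigma,i\nu}(g)v,w\rangle$ is an Eisenstein integral, and Harish-Chandra's constant-term analysis along $A_{P}$ gives a uniform bound
\[
\left\vert\langle\pi_{P,\sigma,i\nu}(g)v,w\rangle\right\vert \leq C_{v,w}(1+\left\Vert\nu\right\Vert)^{N}\Xi(g)(1+\log\left\Vert g\right\Vert)^{d}.
\]
Together with the polynomial growth of $\mu(\sigma,\nu)$ and $\alpha\in\mathcal{S}(\mathfrak{a}_{P}^{*})$, this yields absolute convergence and the basic estimate $|\varphi_{P,\sigma,\alpha,v,w}(g)|\leq C\,\Xi(g)(1+\log\left\Vert g\right\Vert)^{d}$. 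To upgrade to arbitrary Schwartz decay in the $A_{P}$-direction, I would integrate by parts in $\nu$: the leading asymptotic behavior along $A_{P}$ is a sum over $s\in W(A_{P})$ of terms of the form $a^{s\cdot i\nu}$ times polynomials in $\log a_{P}$, so each differentiation in $\nu$ brings down a factor of $\log a_{P}$. Since $\alpha(\nu)\mu(\sigma,\nu)$ is Schwartz, iterated integration by parts converts that smoothness into arbitrary polynomial decay $(1+\log\left\Vert a_{P}\right\Vert)^{-k}$. Decay along $^{o}M_{P}$ is supplied by the cuspidality of the square-integrable $\sigma$-matrix coefficients. Derivatives by $x,y\in U(Lie(G))$ pass under the integral, altering only $v,w$ and producing further polynomial factors in $\nu$, so the Schwartz estimate persists in every seminorm $p_{k,x,y}$.

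For orthogonality, the Maass--Selberg relations compute $\langle\varphi_{P,\sigma,\alpha,v,w},\varphi_{P',\sigma',\alpha',v',w'}\rangle_{L^{2}}$ as an integral over $\mathfrak{a}_{P}^{*}$ involving standard intertwining operators between $I_{P,\sigma,i\nu}$ and $I_{P',\sigma',i\nu'}$; these vanish identically unless $[P']=[P]$ and $[\sigma']$ lies in the orbit $[[\sigma]]$, which forces the inner product to vanish for distinct pairs. For exhaustion, I would apply the Plancherel formula to $f^{*}\ast f$ evaluated at $e$,
\[
\left\Vert f\right\Vert_{L^{2}}^{2}=\delta(f^{*}\ast f)=\sum_{[P]}\sum_{\sigma}\int_{\mathfrak{a}_{P}^{*}}\left\Vert\pi_{P,\sigma,i\nu}(f)\right\Vert_{HS}^{2}\mu(\sigma,\nu)\,d\nu.
\]
If $f\in\mathcal{C}(G)$ is orthogonal to every $\mathcal{C}_{[P],[\sigma]}(G)$, then pairing with $\varphi_{P,\sigma,\alpha,v,w}$ and varying $\alpha\in\mathcal{S}(\mathfrak{a}_{P}^{*})$ forces $\langle\pi_{P,\sigma,i\nu}(f)v,w\rangle=0$ for $\mu$-almost every $\nu$, hence on the full support of $\mu$ by continuity in $\nu$; substitution into the identity above gives $\left\Vert f\right\Vert_{L^{2}}=0$, so $f=0$.

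The hard part will be the Schwartz estimate of the first step: the uniform polynomial-in-$\nu$ majorization of the Eisenstein integral and the integration-by-parts argument that trades smoothness in $\nu$ for $\log\left\Vert a_{P}\right\Vert$-decay both rest on Harish-Chandra's machinery of leading exponents, the $c$-function, and the regular-singular system satisfied by the Eisenstein integral. This is the technical heart of \cite{RRGII} Chapter 13, and it is precisely where the normalizations absorbed into $\mu(\sigma,\nu)$ become relevant.
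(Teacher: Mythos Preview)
The paper does not supply a proof of this theorem; it is stated with the reference ``cf.\ \cite{RRGII}, Theorem 13.4.6 (1)'' as a known consequence of Harish-Chandra's Plancherel theorem, and the surrounding text explicitly describes Section~6 as a review of Harish-Chandra's results rather than a derivation. So there is no in-paper argument to compare your proposal against.

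That said, your outline is an accurate high-level summary of the strategy carried out in \cite{RRGII}, Chapters 12--13: the Schwartz estimate for wave packets via the uniform tempered bound on Eisenstein integrals together with the asymptotic (constant-term) expansion along $A_{P}$ and integration by parts in $\nu$; orthogonality of the blocks from the Maass--Selberg relations; and exhaustion from the Plancherel identity $\delta(f^{*}\ast f)=\sum\int\|\pi_{P,\sigma,i\nu}(f)\|_{HS}^{2}\mu(\sigma,\nu)\,d\nu$. Your own closing paragraph correctly identifies where the real work lies and that it is precisely the content of \cite{RRGII} Chapter 13. One small caution: in the first step, the integration-by-parts argument as you describe it is heuristic---what actually makes it rigorous in \cite{RRGII} is the full asymptotic expansion of the Eisenstein integral with remainder estimates uniform in $\nu$ (the ``leading term'' theory), not merely the shape of the dominant exponentials; but you flag exactly this as the hard part, so the proposal is honest about its dependence on that machinery.
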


\section{A decomposition of $\mathcal{C}(N_{o}\backslash G;\chi)$}

\begin{lemma}
Let $\varphi,\psi\in\mathcal{C}(G)$ then
\[
\int_{N_{o}\backslash G}\int_{N_{o}}\int_{N_{o}}\left\vert \varphi(n_{1}%
g)\psi(n_{2}g)\right\vert dgdn_{1}dn_{2}<\infty.
\]

\end{lemma}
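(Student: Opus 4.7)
The integrand, once integrated over $(n_1,n_2)\in N_o\times N_o$, yields a left-$N_o$-invariant function
\[
J(g)=\int_{N_o}\int_{N_o}|\varphi(n_1 g)\psi(n_2 g)|\,dn_1\,dn_2
\]
of $g$: the change of variable $n_i\mapsto n_i n_0^{-1}$ shows $J(n_0 g)=J(g)$ for $n_0\in N_o$. Consequently $\int_G J(g)\,dg=\mathrm{vol}(N_o)\int_{N_o\backslash G} J(\bar g)\,d\bar g$, and since $\mathrm{vol}(N_o)=\infty$ whenever $\dim N_o>0$, the literal integral in the statement is $+\infty$ for any nonzero $\varphi,\psi$. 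I interpret the lemma, as demanded by the Fubini-type manipulations used later in the Whittaker theory, as the corresponding assertion on $N_o\backslash G$:
\[
\int_{N_o\backslash G}\int_{N_o}\int_{N_o}|\varphi(n_1 g)\psi(n_2 g)|\,dn_1\,dn_2\,d\bar g<\infty,
\]
with the quotient measure $d\bar g=a^{-2\rho}\,da\,dk$ in the Iwasawa parametrization $g=ak$ (this is the measure already used in Lemma~\ref{Ltwo}).

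The main analytic ingredient is the Jacquet-type decay estimate: for each $f\in\mathcal{C}(G)$ and each $d>0$ there is a continuous seminorm $q_d$ on $\mathcal{C}(G)$ such that
\[
\int_{N_o}|f(nak)|\,dn\ \le\ q_d(f)\,a^{\rho}\,(1+\log\|a\|)^{-d},\qquad a\in A_o,\ k\in K.
\]
This is the absolute-value analogue of the computation in the proof of Proposition~\ref{Fourier}: the Harish-Chandra Schwartz bound gives $|f(nak)|\le p_{d,1,1}(f)\,\Xi(na)(1+\log\|na\|)^{-d}$, and Proposition~\ref{basic} applied to $P=P_o$ (where ${}^oM_o$ is compact, so $\Xi_{{}^oM_o}\equiv 1$) controls $a^{-\rho}\int_{N_o}\Xi(na)(1+\log\|na\|)^{-d}\,dn$ by $(1+\log\|a\|)^{-d}$ times a constant, provided $d$ is chosen sufficiently large.

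Applying the estimate to both $\varphi$ and $\psi$ with a common large $d$ and multiplying gives
\[
J(ak)\ \le\ q_d(\varphi)\,q_d(\psi)\,a^{2\rho}(1+\log\|a\|)^{-2d}.
\]
Integrating against $a^{-2\rho}\,da\,dk$ cancels $a^{2\rho}$ and leaves
\[
q_d(\varphi)\,q_d(\psi)\,\mathrm{vol}(K)\int_{A_o}(1+\log\|a\|)^{-2d}\,da.
\]
By Lemma~\ref{basic-inequality} the weight $(1+\log\|a\|)^{-2d}$ is comparable to $(1+\|\log a\|)^{-2d}$; transporting to $\mathfrak a_o$ via $a=\exp H$, this becomes $(1+\|H\|)^{-2d}\,dH$ on $\mathfrak a_o\cong\mathbb{R}^{\dim A_o}$, which is integrable as soon as $2d>\dim A_o$.

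The principal obstacle is interpretive rather than technical: one must recognize that the stated integral over $G$ is forced to be $+\infty$ by the $N_o$-left-invariance of the inner double integral, so it must be read as an integral over $N_o\backslash G$. Once that reinterpretation is in place, the proof reduces to a single application of the Harish-Chandra estimate Proposition~\ref{basic} already recorded in the paper; no new analytic input is required.
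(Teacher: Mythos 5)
Your proof is correct and follows essentially the same route as the paper: the paper likewise sets $\varphi_{1}(g)=\int_{N_{o}}\left\vert \varphi(ng)\right\vert dn$, invokes the bound $\varphi_{1}(g)\leq C_{d}\,a(g)^{\rho}(1+\left\Vert \log a(g)\right\Vert )^{-d}$ extracted from the proof of Proposition \ref{Fourier} (valid for the trivial character of $N_{o}$), and then integrates over $N_{o}\backslash G$ exactly as in Lemma \ref{Ltwo}. Your observation that the triple integral over $G$ as literally written diverges is also correct and is consistent with the source, since the paper's own argument in fact only establishes $\int_{G}\varphi_{1}(g)\left\vert \psi(g)\right\vert dg=\int_{N_{o}\backslash G}\varphi_{1}\psi_{1}\,d\bar{g}<\infty$ --- precisely the $N_{o}\backslash G$ reading you adopt, which is also the form in which the lemma is applied in the orthogonality proposition that follows it.
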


\begin{proof}
The proof of Proposition \ref{Fourier} used only the estimates on the absolute
value of an element of $\mathcal{C}(G)$ and is true for the trivial character
of $N_{o}$. Set
\[
\varphi_{1}(g)=\int_{N_{o}}\left\vert \varphi(ng)\right\vert dn.
\]
Then the proof of Proposition \ref{Fourier} shows that
\[
\varphi_{1}(g)\leq C_{d}a(g)^{\rho}(1+\left\Vert \log a(g)\right\Vert )^{-d}%
\]
for all $d>0$. Also (using the proof of Lemma \ref{Ltwo})
\[
\int_{G}\varphi_{1}(g)\left\vert \psi(g)\right\vert dg=\int_{N_{o}\backslash
G}\int_{N_{o}}\varphi_{1}(g)\left\vert \psi(ng)\right\vert dndg=
\]%
\[
\int_{N_{o}\backslash G}\varphi_{1}(g)\psi_{1}(g)dg<\infty
\]
with
\[
\psi_{1}(g)=\int_{N_{o}}\left\vert \psi(ng)\right\vert dg.
\]

\end{proof}

\begin{proposition}
Let $\chi$ be a character of $N_{o}$. If $\varphi\in\mathcal{C}_{[P],[\omega
]}(G)$ and $\psi\in\mathcal{C}_{[Q].[\eta]}(G)$ and $([P],[\sigma
])\neq([Q],[\eta])$ then
\[
\left\langle T_{\chi}\varphi,T_{\chi}\psi\right\rangle =0.
\]

\end{proposition}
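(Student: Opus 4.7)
The plan is to unfold the Whittaker inner product $\langle T_\chi\varphi, T_\chi\psi\rangle$ into an integral over $G$ of $\varphi$ paired with a family of left translates of $\psi$, and then invoke the $L^2(G)$-orthogonality of the Harish-Chandra components supplied by Theorem \ref{HCimplication}.

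First I would expand the two Jacquet integrals and use unitarity $\overline{\chi(n_2)^{-1}}=\chi(n_2)$ to write
\[
T_\chi\varphi(g)\,\overline{T_\chi\psi(g)} = \int_{N_o\times N_o}\chi(n_1^{-1}n_2)\,\varphi(n_1 g)\,\overline{\psi(n_2 g)}\,dn_1\,dn_2 .
\]
Since $\chi$ is a character of the nilpotent group $N_o$, it factors through $N_o/[N_o,N_o]$, so $\chi(n_1^{-1} m n_1)=\chi(m)$. Changing variables $n_2=m n_1$ then yields
\[
T_\chi\varphi(g)\,\overline{T_\chi\psi(g)} = \int_{N_o\times N_o}\chi(m)\,\varphi(n_1 g)\,\overline{\psi(m n_1 g)}\,dn_1\,dm .
\]

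Next I would integrate over $N_o\backslash G$ and apply Fubini. Absolute convergence of the resulting triple integral over $N_o\times N_o\times(N_o\backslash G)$ is exactly the content of the lemma preceding this proposition (applied to the functions $|\varphi|$ and $|\psi|$, together with the observation that translating $\psi$ by $m$ keeps it in $\mathcal{C}(G)$), so the swap is legitimate. Setting $F_m(g):=\varphi(g)\overline{\psi(mg)}$, we have $F_m(n_1 g)=\varphi(n_1 g)\overline{\psi(m n_1 g)}$, and the standard measure identity $\int_G F_m(g)\,dg=\int_{N_o\backslash G}\int_{N_o}F_m(n_1g)\,dn_1\,d\bar g$ collapses the inner pair of integrals:
\[
\langle T_\chi\varphi,T_\chi\psi\rangle = \int_{N_o}\chi(m)\,\langle \varphi,\,L_{m^{-1}}\psi\rangle_{L^2(G)}\,dm,
\]
where $(L_{m^{-1}}\psi)(g)=\psi(mg)$.

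Finally I would invoke Theorem \ref{HCimplication}. The Harish-Chandra components $\mathcal{C}_{[Q],[\eta]}(G)$ are stable under the left regular representation (a standard feature of the Plancherel decomposition, following from the fact that the basis functions $\varphi_{Q,\eta,\alpha,v,w}$ are matrix coefficients of the family $\pi_{Q,\eta,i\nu}$ and left translation only modifies the vector $w$ within $(I_\eta)_K$). Hence $L_{m^{-1}}\psi\in\mathcal{C}_{[Q],[\eta]}(G)$ for every $m\in N_o$, and since $([P],[\omega])\neq([Q],[\eta])$ the orthogonality clause of Theorem \ref{HCimplication} forces $\langle\varphi,L_{m^{-1}}\psi\rangle_{L^2(G)}=0$ for each $m$. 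Integrating zero against $\chi(m)\,dm$ gives $\langle T_\chi\varphi,T_\chi\psi\rangle=0$.

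The main obstacle is the Fubini justification in the unfolding step, which is precisely why the previous lemma (absolute integrability of products of two Harish-Chandra Schwartz functions against two $N_o$-integrations) was proved immediately beforehand; the rest is a bookkeeping exercise plus the bi-$G$-invariance of the Harish-Chandra components.
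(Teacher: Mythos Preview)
Your proof is correct and follows essentially the same route as the paper: unfold the two Whittaker integrals, use absolute convergence from the preceding lemma to collapse one $N_o$-integral against $N_o\backslash G$ into a full $G$-integral, and then invoke the left-invariance of $\mathcal{C}_{[Q],[\eta]}(G)$ together with the $L^2(G)$-orthogonality of Theorem~\ref{HCimplication}. Your parenthetical justification that left translation ``only modifies the vector $w$ within $(I_\eta)_K$'' is slightly imprecise (the translated $w$ is $\nu$-dependent and need not remain $K$-finite), but the left-invariance of the closed component $\mathcal{C}_{[Q],[\eta]}(G)$ is indeed the standard fact being used, and the paper itself invokes it without further comment.
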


\begin{proof}
We are looking at the integral
\[
\int_{N_{o}\backslash G}\int_{N_{o}}\chi(n_{1})^{-1}\varphi(n_{1}g)dn_{1}%
\int_{N_{o}}\chi(n_{2})\overline{\psi(n_{2}g)}dn_{2}dg.
\]
This integral converges absolutely by the previous lemma. We also note that it
can be written
\[
\int_{G}\varphi(g)\int_{N_{o}}\chi(n)\overline{\psi(ng)}dndg
\]
which also converges absolutely. Hence it can be rewritten as
\[
\int_{N_{o}}\chi(n)\int_{G}\varphi(g)\overline{\psi(ng)}dgdn=\int_{N_{o}}%
\chi(n)^{-1}\int_{G}\varphi(g)\overline{L(n)\psi(g)}dgdn.
\]
But $L(n)\psi\in\mathcal{C}_{[Q],[\eta]}(G)$, and $\left\langle \mathcal{C}%
_{[P],[\omega]}(G),\mathcal{C}_{[Q],[\eta]}(G)\right\rangle =0$.
\end{proof}

\begin{theorem}
\label{prelim-plancherel}Assume that $\chi$ is generic. The space
$\mathcal{C}(N_{o}\backslash G;\chi)$ is the completion of the orthogonal
direct sum
\[
\bigoplus_{\lbrack P]\in\mathcal{P}}\bigoplus_{[\omega]}T_{\chi}%
\mathcal{C}_{[P],[\omega]}(G).
\]

\end{theorem}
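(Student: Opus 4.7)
The plan is to assemble three pieces that have already been established in the paper and observe that they fit together formally. From Theorem \ref{HCimplication} we have the Harish-Chandra decomposition
\[
\mathcal{C}(G)=\bigoplus_{[P]\in\mathcal{P}}\bigoplus_{[[\sigma]]\in[\mathcal{E}_{2}({}^{o}M_{P})]}\mathcal{C}_{[P],[[\sigma]]}(G),
\]
an orthogonal direct sum in the $L^{2}$ inner product with convergence in $\mathcal{C}(G)$. From Theorem \ref{surjective}, which is where genericity of $\chi$ is used, the map $T_{\chi}\colon\mathcal{C}(G)\to\mathcal{C}(N_{o}\backslash G;\chi)$ is surjective, and from Proposition \ref{Fourier} it is continuous. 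Finally, the proposition immediately preceding the statement tells us that for $\varphi\in\mathcal{C}_{[P],[\omega]}(G)$ and $\psi\in\mathcal{C}_{[Q],[\eta]}(G)$ with $([P],[\omega])\ne([Q],[\eta])$ one has $\langle T_{\chi}\varphi,T_{\chi}\psi\rangle=0$, and Lemma \ref{Ltwo} guarantees that the $L^{2}$ inner product on the image is well defined.

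Using these pieces, I would argue as follows. Fix $f\in\mathcal{C}(N_{o}\backslash G;\chi)$. By Theorem \ref{surjective}, choose $\varphi\in\mathcal{C}(G)$ with $T_{\chi}\varphi=f$. Decompose $\varphi=\sum_{[P],[[\sigma]]}\varphi_{[P],[[\sigma]]}$ with $\varphi_{[P],[[\sigma]]}\in\mathcal{C}_{[P],[[\sigma]]}(G)$ and convergence in $\mathcal{C}(G)$, by Theorem \ref{HCimplication}. Continuity of $T_{\chi}$ transports this to
\[
f=T_{\chi}\varphi=\sum_{[P],[[\sigma]]}T_{\chi}\varphi_{[P],[[\sigma]]},
\]
with convergence in $\mathcal{C}(N_{o}\backslash G;\chi)$. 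Thus the algebraic sum $\sum_{[P],[[\omega]]}T_{\chi}\mathcal{C}_{[P],[[\omega]]}(G)$ is dense in $\mathcal{C}(N_{o}\backslash G;\chi)$. Combined with the pairwise $L^{2}$-orthogonality above, this is precisely the assertion that $\mathcal{C}(N_{o}\backslash G;\chi)$ is the completion of the orthogonal direct sum
\[
\bigoplus_{[P]\in\mathcal{P}}\bigoplus_{[[\omega]]\in[\mathcal{E}_{2}({}^{o}M_{P})]}T_{\chi}\mathcal{C}_{[P],[[\omega]]}(G).
\]

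There is no real obstacle here beyond bookkeeping, since the serious analytic work (the Harish-Chandra decomposition, the orthogonality computation, and the surjectivity of $T_{\chi}$) has already been carried out. The only point worth being careful about is the compatibility of topologies: orthogonality is in the $L^{2}$ inner product, but density is in the Fréchet topology of $\mathcal{C}(N_{o}\backslash G;\chi)$. This is harmless because the continuous inclusion of Lemma \ref{Ltwo} makes $L^{2}$-orthogonality a bona fide statement about the elements of the Schwartz space, exactly as in Theorem \ref{HCimplication}.
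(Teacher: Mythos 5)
Your proposal is correct and is essentially the paper's own argument: the paper's proof consists of the single sentence that the preceding Proposition (orthogonality of the images), Theorem \ref{surjective} (surjectivity of $T_{\chi}$), and the continuity of $T_{\chi}$ imply the theorem, and you have simply spelled out how those three ingredients combine via the decomposition of Theorem \ref{HCimplication}. No further comment is needed.
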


\begin{proof}
The Proposition combined with Theorem \ref{surjective} and the continuity of
$T_{\chi}$ imply the theorem.
\end{proof}

\begin{lemma}
If $\varphi\in\mathcal{C}(G)$ and if $P$ is a parabolic subgroup of $G$
containing $P_{o}$ then
\[
(T_{\chi}\varphi)^{P}(ma)=\int_{N_{o}}\chi(n_{o})\left(  L(n_{o}%
)\varphi\right)  ^{\bar{P}}(ma)dn_{o}.
\]

\end{lemma}

\begin{proof}
Let $\varphi\in\mathcal{C}(G)$. Then we calculate $\left(  \varphi_{\chi
}\right)  ^{P}(ma)$ for $m\in{}^{o}M_{P},a\in A_{P}$.
\[
\left(  \varphi_{\chi}\right)  ^{P}(ma)=a^{\rho_{P}}\int_{\bar{N}_{P}}%
\int_{N_{o}}\chi(n_{o})^{-1}\varphi(n_{o}\bar{n}ma)dn_{o}d\bar{n}.
\]
The integral converges absolutely so we may interchange the order of
integration. This implies that
\[
\left(  \varphi_{\chi}\right)  ^{P}(ma)=a^{\rho_{P}}\int_{N_{o}}\chi
(n_{o})^{-1}\int_{\bar{N}_{P}}\varphi(n_{o}\bar{n}ma)dn_{o}d\bar{n}=
\]
\[
a^{\rho_{P}}\int_{N_{o}}\chi(n_{o})\int_{\bar{N}_{P}}\varphi(n_{o}^{-1}\bar
{n}ma)dn_{o}d\bar{n}=\int_{N_{o}}\chi(n_{o})\left(  L(n_{o})\varphi\right)
^{\bar{P}}(ma)dn_{o}.
\]

\end{proof}

\section{\label{Analytic Cont}The holomorphic continuation of Jacquet
integrals}

The main purpose of this section is to describe our work on the analytic
continuation of Jacquet integrals. The proofs of the main results are
complicated. We will refer to the appropriate places in \cite{RRGII} Section
15.4. \ In addition the section includes some new estimates on Jacquet
integrals outside the range of convergence that are needed in the Whittaker
Plancherel theorem.

We will assume that $\chi$ is generic throughout this section. If $(\pi,H)$ is
a Hilbert representation of $G$ then the space of Whittaker vectors on
$H^{\infty}$ is
\[
Wh_{\chi}(H^{\infty})=\{\lambda\in(H^{\infty})^{\prime}|\lambda\circ
\pi(n)=\chi(n)\lambda,n\in N_{o}\}.
\]
Since a finitely generated $(Lie(G),K)$ module is finitely generated as a
$U(Lie(N_{o}))$ module (c.f. \cite{RRGI} 3.7.2 p.96) we have

\begin{lemma}
\label{Finitenes-Wh}(\cite{RRGII}, Lemma 15.4.3) If $(\pi,H)$ is admissible
and finitely generated then $\dim Wh_{\chi}(H^{\infty})<\infty$.
\end{lemma}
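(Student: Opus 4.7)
The plan is to combine the cited fact — that a finitely generated $(\mathrm{Lie}(G),K)$-module is finitely generated over $U(\mathrm{Lie}(N_o))$ — with the density of $H_K$ in $H^\infty$, reducing the transcendental problem (continuous Whittaker functionals on a Fréchet space) to a purely algebraic one (module homomorphisms into a one-dimensional $U(\mathrm{Lie}(N_o))$-module), where finiteness is immediate from the number of generators.

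More precisely, let $\mathbb{C}_{d\chi}$ denote the one-dimensional $U(\mathrm{Lie}(N_o))$-module on which $X \in \mathrm{Lie}(N_o)$ acts by the scalar $d\chi(X)$. I would first observe that restriction to $H_K$ gives a linear map
\[
R : Wh_\chi(H^\infty) \longrightarrow \mathrm{Hom}_{U(\mathrm{Lie}(N_o))}(H_K,\mathbb{C}_{d\chi}).
\]
That $R(\lambda)$ is $U(\mathrm{Lie}(N_o))$-equivariant comes from differentiating the Whittaker transformation law: for $v \in H_K \subset H^\infty$ and $X \in \mathrm{Lie}(N_o)$, the curve $t \mapsto \lambda(\pi(\exp tX)v) = \chi(\exp tX)\lambda(v)$ is smooth, and its derivative at $t=0$ yields $\lambda(\pi(X)v) = d\chi(X)\lambda(v)$; iterating gives equivariance for all of $U(\mathrm{Lie}(N_o))$.

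Next I would show $R$ is injective. Since $(\pi,H)$ is admissible and finitely generated, $H_K$ is dense in $H^\infty$ in its Fréchet topology (standard; see for instance the basic facts on smooth vectors in \cite{RRGI}). Any $\lambda \in Wh_\chi(H^\infty)$ is by definition continuous on $H^\infty$, and hence is determined by its restriction to the dense subspace $H_K$. So $R$ is injective.

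Finally, by the cited result (\cite{RRGI} 3.7.2, p.~96), $H_K$ is finitely generated as a $U(\mathrm{Lie}(N_o))$-module; let $v_1,\dots,v_n$ be generators. Evaluation at the $v_i$ gives an injection
\[
\mathrm{Hom}_{U(\mathrm{Lie}(N_o))}(H_K,\mathbb{C}_{d\chi}) \hookrightarrow \mathbb{C}^n,\qquad \varphi \longmapsto (\varphi(v_1),\dots,\varphi(v_n)),
\]
so the right-hand side of the first display has dimension at most $n$. Composing with the injection $R$ yields $\dim Wh_\chi(H^\infty) \leq n < \infty$. The only mild subtlety is the differentiation step ensuring that continuous Whittaker functionals genuinely pass from the integrated $N_o$-action to the $\mathrm{Lie}(N_o)$-action on $H_K$; this is where the smoothness of the orbit maps on $H^\infty$ is used, and it is the reason the argument requires restriction to $H_K \subset H^\infty$ rather than to an arbitrary dense subspace.
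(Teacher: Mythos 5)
Your proof is correct and follows the same route the paper intends: the lemma is stated there as an immediate consequence of the fact that $H_K$ is finitely generated over $U(\mathrm{Lie}(N_o))$ (\cite{RRGI} 3.7.2), and your argument simply supplies the standard intermediate steps (differentiating the transformation law, density of $H_K$ in $H^\infty$, evaluation at generators) that the paper leaves implicit.
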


\begin{theorem}
\label{discretewhitt}Let $(\pi,H)$ be an irreducible square integrable
representation of $G$. If $w\in H_{K}$ then
\[
\lambda_{w}(v)=\int_{N_{o}}\chi(n)^{-1}\left\langle \pi(n)v,w\right\rangle dn
\]
is absolutely\ convergent for $v\in H^{\infty}$ and defines an element of
$Wh_{\chi}(H^{\infty})$. Furthermore, $Wh_{\chi}(H^{\infty})=\{\lambda
_{w}|w\in H_{K}\}$.
\end{theorem}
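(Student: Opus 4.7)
The absolute convergence and the Whittaker transformation law follow directly from the material already in place. For $v\in H^\infty$ and $w\in H_K$ the matrix coefficient $f_{v,w}(g)=\langle\pi(g)v,w\rangle$ lies in $\mathcal{C}(G)$: when $v,w\in H_K$ this is the theorem on matrix coefficients of discrete series recalled in Section~3, and for arbitrary $v\in H^\infty$ it follows from the Casselman--Wallach extension explicitly noted there. Proposition~\ref{Fourier} then gives
\[
T_\chi f_{v,w}(g)=\int_{N_o}\chi(n)^{-1}\langle\pi(ng)v,w\rangle\,dn
\]
as an absolutely convergent integral defining an element of $\mathcal{C}(N_o\backslash G;\chi)$; setting $g=e$ yields the absolute convergence of $\lambda_w(v)$, and the substitution $n\mapsto nn_0^{-1}$ together with the multiplicativity of $\chi$ produces $\lambda_w\circ\pi(n_0)=\chi(n_0)\lambda_w$.

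For surjectivity the starting point is the identity $T_\chi f_{v,w}(g)=\lambda_w(\pi(g)v)$, which exhibits $T_\chi\mathcal{C}_\pi(G)$ as the linear span of the functions $g\mapsto\lambda_w(\pi(g)v)$ with $v,w$ ranging over $H^\infty$. This is a $G$-stable subspace of ${}^{o}\mathcal{C}(N_o\backslash G;\chi)$ transforming $\pi$-isotypically under right translation. I would argue that it is \emph{all} of the $\pi$-isotypic component. Theorem~\ref{main-cusp} gives ${}^{o}\mathcal{C}(N_o\backslash G;\chi)=T_\chi{}^{o}\mathcal{C}(G)$, and by the theorems of Section~3 the source decomposes as the topological direct sum $\bigoplus_\omega\mathcal{C}_\omega(G)$, an isotypic decomposition for right translation. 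Continuity of $T_\chi$ and the fact that it commutes with right translation force the induced decomposition of ${}^{o}\mathcal{C}(N_o\backslash G;\chi)$ to be the isotypic decomposition, so $T_\chi\mathcal{C}_\pi(G)$ is precisely the $\pi$-isotypic summand.

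Next I would invoke Frobenius reciprocity for Whittaker models, which identifies the $\pi$-isotypic component with $H^\infty\otimes Wh_\chi(H^\infty)$ via $v\otimes\lambda\mapsto(g\mapsto\lambda(\pi(g)v))$, and under which evaluation at $g=e$ becomes the natural pairing $v\otimes\lambda\mapsto\lambda(v)$. Combined with the previous paragraph this turns the spanning statement $T_\chi\mathcal{C}_\pi(G)=\mathrm{span}\{v\otimes\lambda_w:v,w\in H^\infty\}$ into the assertion that $\{\lambda_w:w\in H^\infty\}$ spans $Wh_\chi(H^\infty)$, using injectivity of the Frobenius identification in the second tensor factor. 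Because $w\mapsto\lambda_w$ is conjugate-linear, this spanning set is automatically a $\mathbb{C}$-subspace, hence equals $Wh_\chi(H^\infty)$.

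Finally I would reduce from $H^\infty$ back to $H_K$: the Schwartz estimates in the proof of Proposition~\ref{Fourier} show $w\mapsto\lambda_w$ is continuous from $H^\infty$ into the finite-dimensional space $Wh_\chi(H^\infty)$, and $H_K$ is dense in $H^\infty$, so the image of $H_K$ coincides with that of $H^\infty$. This gives the desired equality $\{\lambda_w:w\in H_K\}=Wh_\chi(H^\infty)$. The main obstacle I anticipate is the matching step in the third paragraph: showing that $T_\chi\mathcal{C}_\pi(G)$ exhausts rather than merely sits inside the $\pi$-isotypic subspace of ${}^{o}\mathcal{C}(N_o\backslash G;\chi)$, which depends crucially on Theorem~\ref{main-cusp} and the orthogonality across discrete series in the cusp-form decomposition.
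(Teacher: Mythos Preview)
Your first paragraph matches the paper: matrix coefficients of discrete series lie in $\mathcal{C}(G)$, and Proposition~\ref{Fourier} then gives the convergence and the transformation law. The divergence is in the surjectivity argument, and there is a genuine gap.

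The paper does not pass through Theorem~\ref{main-cusp} or an isotypic decomposition of ${}^{o}\mathcal{C}(N_o\backslash G;\chi)$ at all. Given an arbitrary $\lambda\in Wh_\chi(H^\infty)$, it forms the pairing
\[
T_\lambda(v)(\varphi)=\int_{G}\overline{\lambda(\pi(g)v)}\,\varphi(g)\,dg
=\int_{N_o\backslash G}\overline{\lambda(\pi(g)v)}\,\varphi_\chi(g)\,dg,
\]
takes $\varphi(g)=\langle\pi(g)v,w\rangle$, expands $\lambda=\sum_{\gamma}\lambda\circ E_\gamma$ along $K$-types with $\lambda\circ E_\gamma=\langle\,\cdot\,,z_\gamma\rangle$, and applies the Schur orthogonality relations term by term to obtain
\[
T_\lambda(v)(\varphi)=\frac{1}{d(\pi)}\,\|v\|^{2}\,\overline{\lambda(w)}.
\]
Hence if $g\mapsto\lambda(\pi(g)v)$ is orthogonal to every $g\mapsto\lambda_w(\pi(g)v)$ for some fixed $v\neq 0$, then $\lambda(w)=0$ for all $w\in H_K$, so $\lambda=0$. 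Finite-dimensionality of $Wh_\chi(H^\infty)$ finishes the argument.

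The gap in your approach is the Frobenius step. You claim that Frobenius reciprocity identifies the $\pi$-isotypic part of ${}^{o}\mathcal{C}(N_o\backslash G;\chi)$ with $H^\infty\otimes Wh_\chi(H^\infty)$. Frobenius reciprocity gives this for $C^\infty(N_o\backslash G;\chi)$, not for the Schwartz cusp-form subspace: the $\pi$-isotypic component of ${}^{o}\mathcal{C}(N_o\backslash G;\chi)$ corresponds only to the subspace $W'\subseteq Wh_\chi(H^\infty)$ of functionals $\lambda$ for which $g\mapsto\lambda(\pi(g)v)$ actually lands in $\mathcal{C}(N_o\backslash G;\chi)$. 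Your chain of identifications then yields $\{\lambda_w:w\in H_K\}=W'$, not $\{\lambda_w\}=Wh_\chi(H^\infty)$. To close the gap you would need $W'=Wh_\chi(H^\infty)$, i.e.\ that \emph{every} Whittaker functional on a discrete series representation produces Schwartz Whittaker functions. That statement is a corollary of the theorem once the $\lambda_w$ are known to span, not an input to its proof; nothing earlier in the paper supplies it independently. The paper's orthogonality computation avoids the issue entirely because it never asserts that $g\mapsto\lambda(\pi(g)v)$ is Schwartz for an arbitrary $\lambda$: the $K$-type expansion reduces the pairing to integrals of products of honest matrix coefficients, where Schur orthogonality applies directly.
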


\begin{proof}
Let $w\in H_{K}$. If $v\in H^{\infty}$ then the function $T_{w}(v)(g)=$
$\left\langle \pi(g)v,w\right\rangle $ is in $\mathcal{C}(G)$. Furthermore,
$T_{w}:H^{\infty}\rightarrow\mathcal{C}(G)$ is continuous. Thus the first part
of our assertion follows from Proposition \ref{Fourier}.

We now prove the second assertion. Let $\lambda\in Wh_{\chi}(H^{\infty})$.
Let, for $\gamma\in\hat{K}$, $E_{\gamma}$ denote the orthogonal projection of
$H$ \ onto its $\gamma$--isotypic component, $H(\gamma)$. Then
\[
\lambda\circ E_{\gamma}(v)=\left\langle v,z_{\gamma}\right\rangle
\]
with $z_{\gamma}\in H(\gamma)$. \ Let $\left\Vert ...\right\Vert $ be a
standard norm.

First note that

1. The function $g\rightarrow\lambda(\pi(g)v)$ is in $\mathcal{C}%
(N_{o}\backslash G;\chi)$.

To prove this assertion we first note that Lemma 15.2.3 in \cite{RRGII} (the
Lemma should have the hypothesis that $\chi$ is generic) it is proved that
$\lambda$ is tame for every minimal parabolic subgroup $Q$. Also Theorem 5.5.4
combined with Proposition 5.1.2 in \cite{RRGI} implies that there exists
$\varepsilon>0$ and $C$ such if $a\in A$ and $\alpha(a)\geq1$ for all
$\alpha\in\Phi(Q,A_{Q})$ then if $v,w\in H_{K}$
\[
\left\vert \left\langle \pi(a)v,w\right\rangle \right\vert \leq C_{v,w}%
a^{-(1+\varepsilon)\rho_{Q}}.
\]
Theorem 15.2.5 in \cite{RRGII} now implies that here exists $\varepsilon>0$
and $p$, a continuous seminorm on $H^{\infty}$ such that if $a$ is as above
then
\[
\lambda(\pi(a)v)\leq p(v)a^{-\rho_{Q}-\varepsilon\rho_{Q}},v\in H^{\infty}.
\]
This inequality implies 1.

Consider
\[
I_{w}(\lambda)(v)=\int_{N_{o}\backslash G}\overline{\lambda(\pi(g)v)}%
\lambda_{w}(\pi(g)v)dg=
\]%
\[
\int_{G}\overline{\lambda(\pi(g)v)}\left\langle \pi(g)v,w\right\rangle dg.
\]
Which converges by 1. Note that
\[
\varphi\longmapsto\int_{G}\overline{\lambda(\pi(g)v)}\varphi(g)dg
\]
defines a continuous functional on $\mathcal{C}(G)$. Theorem \ref{K-Fourier}
implies that if we set
\[
\varphi_{\gamma}(g)=d(\gamma)\int_{K}\chi_{\gamma}(k)^{-1}\varphi(kg)dk
\]
(here $d(\gamma)$ is the dimension, and $\chi_{\gamma}$ is the character of
$\gamma$) then the series $\sum_{\gamma\in\hat{K}}\varphi_{\gamma}$ converges
to $\varphi$ in $\mathcal{C}(G)$. If $\varphi(g)=\left\langle \pi
(g)v,w\right\rangle $ then the Schur orthogonality relations (for $K$) imply
that
\[
\int_{G}\overline{\lambda(\pi(g)v)}\left\langle \pi(g)v,w\right\rangle
dg.=\sum_{\gamma\in\widehat{K}}\int_{G}\overline{\lambda(\pi(g)v)}\left\langle
E_{\gamma}\pi(g)v,w\right\rangle dg=
\]%
\[
\sum_{\gamma\in\widehat{K}}\int_{G}\overline{\lambda(E_{\gamma}\pi
(g)v)}\left\langle \pi(g)v,E_{\gamma}w\right\rangle dg=
\]%
\[
\sum_{\gamma\in\widehat{K}}\int_{G}\overline{\lambda(E_{\gamma}\pi
(g)v)}\left\langle E_{\gamma}\pi(g)v,w\right\rangle dg=
\]%
\[
\frac{1}{d(\pi)}\sum_{\gamma\in\widehat{K}}\left\Vert v\right\Vert
^{2}\left\langle z_{\gamma},w\right\rangle =\frac{1}{d(\pi)}\left\Vert
v\right\Vert ^{2}\overline{\lambda(w)}.\qquad\overset{}{(\ast)}%
\]
Here $d(\pi)$ is the formal degree of $\pi$ relative to our choice of Haar
measure on $G$. The formula $(\ast)$ implies that if $I_{w}(\lambda)(v)=0$ for
all $w$ for some $v\neq0$ then $\lambda=0$. Since $\dim Wh_{\chi}(H^{\infty
})<\infty$ the second assertion follows.
\end{proof}

\begin{corollary}
\label{whitproduct}(To the proof) If $\omega\in\mathcal{E}_{2}(G)$ then there
exists an inner product $\left(  ...,...\right)  _{\omega}$ on $Wh_{\chi
}(H_{\omega}^{\infty})$ such that if $\lambda,\mu\in Wh_{\chi}(H_{\omega
}^{\infty})\ $and $v,w\in H_{\omega}^{\infty}$ then
\[
\int_{N_{o}\backslash G}\lambda(\pi_{\omega}(g)v)\overline{\mu(\pi_{\omega
}(g)w)}dg=(\lambda,\mu)_{\omega}\left\langle v,w\right\rangle
\]
In particular, if $\left\Vert v\right\Vert =1$ then
\[
(\lambda,\mu)_{\omega}=\int_{N_{o}\backslash G}\lambda(\pi_{\omega
}(g)v)\overline{\mu(\pi_{\omega}(g)v)}dg
\]
independent of $v$. Furthermore if $w\in H_{\omega}^{\infty}$ then
\[
(\lambda,\lambda_{w})_{\omega}=\frac{1}{d(\omega)}\overline{\lambda(w)}.
\]

\end{corollary}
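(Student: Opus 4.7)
The plan is to read the claim directly off the computation $(*)$ used in the proof of Theorem \ref{discretewhitt}, phrased in terms of the natural $G$-intertwining realization map attached to each Whittaker functional.

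First, for each $\lambda \in Wh_\chi(H_\omega^\infty)$ I would introduce the linear map
\[
\Phi_\lambda : H_\omega^\infty \longrightarrow C^\infty(N_o\backslash G;\chi), \qquad \Phi_\lambda(v)(g) = \lambda(\pi_\omega(g)v),
\]
which is $G$-equivariant for the right regular action. By the second half of Theorem \ref{discretewhitt}, $\lambda = \lambda_z$ for some $z \in H_{\omega,K}$, and then $\Phi_\lambda(v) = T_\chi(c_{v,z})$ where $c_{v,z}(g) = \langle \pi_\omega(g)v,z\rangle$. Since $\pi_\omega$ is square-integrable and $z$ is $K$-finite, $c_{v,z} \in \mathcal{C}(G)$, so Proposition \ref{Fourier} and Lemma \ref{Ltwo} place $\Phi_\lambda(v)$ in $\mathcal{C}(N_o\backslash G;\chi) \subset L^2(N_o\backslash G;\chi)$. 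Irreducibility of $\pi_\omega$ and the closed-graph argument (or direct use of formula $(*)$ with $\mu = \lambda$) show that $\Phi_\lambda$ extends to a bounded $G$-intertwiner $\Phi_\lambda : H_\omega \to L^2(N_o\backslash G;\chi)$.

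Second, for any $\lambda,\mu \in Wh_\chi(H_\omega^\infty)$ the composition $\Phi_\mu^{\ast}\Phi_\lambda : H_\omega \to H_\omega$ commutes with $\pi_\omega(G)$. Because $\pi_\omega$ is irreducible, Schur's lemma produces a unique scalar $(\lambda,\mu)_\omega \in \mathbb{C}$ with $\Phi_\mu^{\ast}\Phi_\lambda = (\lambda,\mu)_\omega \cdot I$; unpacking this identity against arbitrary $v,w \in H_\omega^\infty$ gives exactly
\[
\int_{N_o\backslash G} \lambda(\pi_\omega(g)v)\,\overline{\mu(\pi_\omega(g)w)}\,dg = (\lambda,\mu)_\omega \langle v,w\rangle.
\]
Sesquilinearity of $(\cdot,\cdot)_\omega$ is immediate from the sesquilinear dependence of $\Phi_\mu^{\ast}\Phi_\lambda$ on $(\lambda,\mu)$. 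Positive-definiteness follows because if $\lambda \neq 0$ then one can choose $v \in H_\omega^\infty$ with $\lambda(v) \neq 0$, so the continuous function $\Phi_\lambda(v)$ does not vanish at the identity coset and $\Phi_\lambda \neq 0$; thus $(\lambda,\lambda)_\omega = \|\Phi_\lambda\|^2 / \dim\text{-normalization} > 0$.

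Finally, to evaluate $(\lambda,\lambda_w)_\omega$ I would specialize the orthogonality identity to $\mu = \lambda_w$ and to any nonzero $v$ taken as both arguments; the left-hand side then becomes precisely the integral computed in formula $(*)$ of the proof of Theorem \ref{discretewhitt}, namely $\|v\|^2 \overline{\lambda(w)}/d(\omega)$ (after conjugating appropriately to match the order of $\lambda$ and $\mu$ in the stated identity), whence $(\lambda,\lambda_w)_\omega = \overline{\lambda(w)}/d(\omega)$. The main obstacle in this plan is verifying that $\Phi_\lambda$ extends from $H_\omega^\infty$ to a bounded operator on the full Hilbert space $H_\omega$ so that Schur's lemma in its unitary form applies; this is precisely where the square-integrability of $\pi_\omega$ and the continuity of $T_\chi$ proved in Proposition \ref{Fourier} are used in an essential way.
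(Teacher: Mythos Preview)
Your proposal is correct, and the overall strategy---reducing to the computation $(\ast)$ from Theorem \ref{discretewhitt}---is the same as the paper's. The packaging differs, however.

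The paper proceeds by direct computation: given $\lambda$ and $\mu=\lambda_{w}$ (which suffices by the surjectivity in Theorem \ref{discretewhitt}), it unfolds the definition of $\lambda_{w}$ to convert the $N_{o}\backslash G$ integral into an integral over $G$ of $\lambda(\pi(g)v)\overline{\langle\pi(g)v,w\rangle}$, and then invokes $(\ast)$, which in turn rests on Schur orthogonality for matrix coefficients of the discrete series. The paper writes this only for the same vector $v$ in both slots; the version with distinct vectors follows by the identical manipulation since Schur orthogonality handles different vectors equally well. You instead set up the bounded intertwiners $\Phi_{\lambda}:H_{\omega}\to L^{2}(N_{o}\backslash G;\chi)$ and apply Schur's lemma to $\Phi_{\mu}^{\ast}\Phi_{\lambda}$, extracting the scalar $(\lambda,\mu)_{\omega}$ abstractly, then evaluating it via $(\ast)$. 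Your route makes the factorization $\langle v,w\rangle\cdot(\lambda,\mu)_{\omega}$ and the positive-definiteness transparent without further calculation, at the cost of having to justify boundedness of $\Phi_{\lambda}$; as you note, that follows either from $(\ast)$ with $\lambda=\mu=\lambda_{z}$ or from the continuity of $T_{\chi}$ on $\mathcal{C}(G)$. The paper's route is shorter because it never leaves the level of explicit integrals, but leaves the verification that $(\cdot,\cdot)_{\omega}$ is a genuine inner product implicit.
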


\begin{proof}
First note that if $\lambda,\mu\in Wh_{\chi}(H^{\infty})$ then
\[
\int_{N_{o}\backslash G}\lambda(\pi_{\omega}(g)v)\overline{\mu(\pi_{\omega
}(g)w)}dg
\]
defines an $G$--invariant Hermitian form on $H^{\infty}$. Thus
\[
\int_{N_{o}\backslash G}\lambda(\pi_{\omega}(g)v)\overline{\mu(\pi_{\omega
}(g)w)}dg=c(\lambda,\mu)\left\langle v,w\right\rangle
\]
with $c(\lambda,\mu)$ Hermitian. If $v\neq0$ then $\mathrm{Span}_{\mathbb{C}%
}(\pi_{\omega}(G)v)$ is dense in $H_{\omega}^{\infty}$ so if $\lambda\neq0$
then $\lambda(\pi_{\omega}(g)v)$ is not identically $0$. Hence, if
$\lambda\neq0,$ $c(\lambda,\lambda)>0$. Hence, $c(...,...)$ defines an inner
product on $Wh_{\chi}(H^{\infty})$ which we denote by$(\lambda,\mu)_{\omega}$.
Let $v\in H_{\omega}^{\infty}$ then
\[
\int_{N_{o}\backslash G}\lambda(\pi_{\omega}(g)v)\overline{\lambda_{w}%
(\pi_{\omega}(g)v)}dg=\int_{N_{o}\backslash G}\int_{N_{0}}\lambda(\pi_{\omega
}(g)v)\chi(n)\left\langle w,\pi_{\omega}(ng)v\right\rangle dndg=
\]%
\[
\int_{N_{o}\backslash G}\int_{N_{0}}\lambda(\pi_{\omega}(ng)v)\left\langle
w,\pi_{\omega}(ng)v\right\rangle dndg=\int_{G}\lambda(\pi_{\omega
}(g)v)\overline{\left\langle \pi_{\omega}(g)v,w\right\rangle }dg=
\]%
\[
\frac{1}{d(\omega)}\left\langle v,v\right\rangle \lambda(w)
\]
by formula $(\ast)$ in the proof of the preceding theorem.
\end{proof}

Let $P$ be a standard parabolic subgroup of $G$ with Langlands decomposition
$P=^{o}M_{P}A_{P}N_{P}$. If $(\sigma,H_{\sigma})$ is a Hilbert representation
of $^{o}M_{P}$ that is finitely generated and admissible, $\lambda\in
Wh_{\chi_{|^{o}M_{P}\cap N_{o}}}(H_{\sigma}^{\infty})$ and $f\in I_{\sigma
}^{\infty}$ and $\nu\in\left(  \mathfrak{a}_{P}^{\ast}\right)  _{\mathbb{C}}$
($\mathfrak{a}_{P}=Lie(A_{P})$) then we consider the integral
\[
J(P,\sigma,\nu)(\lambda)(f)=\int_{N_{P}}\chi(n)^{-1}\lambda(_{{}\bar{P}}%
f_{\nu}(n))dn.
\]
We set for $r\in\mathbb{R}$
\[
(\mathfrak{a}_{P}^{\ast})_{r}^{-}=\{\nu\in(\mathfrak{a}_{P}^{\ast
})_{\mathbb{C}}|\operatorname{Re}(\nu,\alpha)<r,\alpha\in\Phi(P,A_{P})\}.
\]
We have (\cite{RRGII} Lemma 15.6.5,p.398)

\begin{lemma}
Let $\sigma,\lambda$ be as above then there exists $c=c_{\sigma}$ such that if
$\nu\in(\mathfrak{a}_{P}^{\ast})_{c}^{-}$ the the integral $J(P,\sigma
,\nu)(\lambda)(f)$ converges absolutely for all $f\in I_{\sigma}^{\infty}$.
Furthermore, the map $\nu\longmapsto J(P,\sigma,\nu)(\lambda)(f)$ is
holomorphic on $(\mathfrak{a}_{P}^{\ast})_{c}^{-}$ for all $\lambda\in
Wh_{\chi_{|^{o}M_{P}\cap N_{o}}}(H_{\sigma}^{\infty})$ and all $f\in
I_{\sigma}^{\infty}$.
\end{lemma}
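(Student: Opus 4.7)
The plan is to bound the integrand pointwise and appeal to a classical integral estimate of Harish-Chandra. First, I decompose an arbitrary $n \in N_{P}$ according to $G = \bar{N}_{P} A_{P} \, {}^{o}M_{P} K$, writing $n = \bar{n}_{\bar{P}}(n) a_{\bar{P}}(n) m_{\bar{P}}(n) k_{\bar{P}}(n)$ as in the proof of Lemma \ref{product-formula}. Then
\[
{}_{\bar{P}} f_{\nu}(n) = a_{\bar{P}}(n)^{\nu - \rho_{P}} \, \sigma(m_{\bar{P}}(n)) f(k_{\bar{P}}(n)),
\]
so $|{}_{\bar{P}} f_{\nu}(n)|$ already carries the expected factor $a_{\bar{P}}(n)^{\mathrm{Re}\,\nu - \rho_{P}}$, while $|\chi(n)^{-1}| = 1$ is irrelevant for convergence.

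Second, because $\lambda$ is a continuous linear functional on the Fr\'echet space $H_{\sigma}^{\infty}$, there are finitely many $X_{1}, \ldots, X_{r} \in U(Lie({}^{o}M_{P}))$ and a constant $C > 0$ with $|\lambda(v)| \leq C \sum_{i} \|\sigma^{\infty}(X_{i}) v\|_{H_{\sigma}}$. Applied to $v = {}_{\bar{P}} f_{\nu}(n)$, after commuting each $X_{i}$ past $\sigma(m_{\bar{P}}(n))$ via $\sigma(m) X = \mathrm{Ad}(m)(X) \sigma(m)$, the moderate-growth bound on $\sigma$ together with property 5 of the norm yields an estimate of the shape
\[
|\lambda({}_{\bar{P}} f_{\nu}(n))| \leq q_{\sigma}(f) \, a_{\bar{P}}(n)^{\mathrm{Re}\,\nu - \rho_{P}} (1 + \|m_{\bar{P}}(n)\|)^{D}
\]
for some $D$ depending only on $\lambda$ and $\sigma$, and some continuous seminorm $q_{\sigma}$ on $I_{\sigma}^{\infty}$. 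A standard bound then gives $1 + \|m_{\bar{P}}(n)\| \leq C'(1 + \|n\|)^{r'}$ for some $r'$.

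Third, I invoke Harish-Chandra's integral estimate (the analogue for $N_{P}$ of property 4 of $\Xi$ in Section 3, or equivalently convergence of the Gindikin--Karpelevich integral): there is $c = c_{\sigma} > 0$ so that for every $\mu \in \mathfrak{a}_{P}^{\ast}$ with $(\mu, \alpha) < c$ for all $\alpha \in \Phi(P, A_{P})$,
\[
\int_{N_{P}} a_{\bar{P}}(n)^{\mu - \rho_{P}} (1 + \log \|n\|)^{D r'} \, dn < \infty,
\]
locally uniformly in such $\mu$. Setting $\mu = \mathrm{Re}\,\nu$ yields absolute convergence of $J_{P,\sigma,\nu}(\lambda)(f)$ for $\nu \in (\mathfrak{a}_{P}^{\ast})_{c}^{-}$, locally uniformly in $\nu$. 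Since ${}_{\bar{P}} f_{\nu}(n)$ is entire in $\nu$ for each fixed $n$, holomorphy of $\nu \mapsto J_{P,\sigma,\nu}(\lambda)(f)$ follows either by differentiation under the integral or by Morera's theorem applied to a contour in any complex affine line in $(\mathfrak{a}_{P}^{\ast})_{c}^{-}$, with Fubini justified by the dominated bound above.

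The main obstacle is the second step: translating the abstract continuity of $\lambda$ on $H_{\sigma}^{\infty}$ into a concrete polynomial bound in $\|n\|$. This requires the moderate-growth estimate for admissible Hilbert representations and careful control of how $\mathrm{Ad}(m_{\bar{P}}(n))$ inflates the $X_{i}$. The threshold $c_{\sigma}$ must then be chosen negative enough to absorb the polynomial degree $D r'$ in the Harish-Chandra integral estimate --- which is essentially the bookkeeping carried out in Lemma 15.6.5 of \cite{RRGII}.
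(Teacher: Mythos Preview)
The paper does not supply its own proof of this lemma; it simply cites \cite{RRGII} Lemma 15.6.5. Your outline follows the standard argument one finds there: decompose $n\in N_P$ via $G=\bar N_P A_P\,{}^oM_P K$, use continuity of $\lambda$ together with moderate growth of $\sigma$ to bound the integrand, and then invoke convergence of the Gindikin--Karpelevich integral for $\operatorname{Re}\nu$ sufficiently dominant in the right direction. So in spirit you are doing exactly what the paper defers to \cite{RRGII} for.

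There is, however, a genuine slip between your second and third steps. From moderate growth you obtain a bound of the form $(1+\|m_{\bar P}(n)\|)^{D}\le C'(1+\|n\|)^{Dr'}$, a \emph{polynomial} in $\|n\|$; but in the displayed integral of step three you write the factor as $(1+\log\|n\|)^{Dr'}$, a \emph{logarithmic} weight. These are not interchangeable, and the logarithmic version would converge for a much larger range of $\mu$ than the polynomial one. What you actually need is that a polynomial factor $(1+\|n\|)^{N}$ can be absorbed into $a_{\bar P}(n)^{\mu-\rho_P}$ by pushing $\mu$ further in the negative direction---this uses an estimate of the type $\|n\|\le C\,a_{\bar P}(n)^{-c\rho_P}$ for $n\in N_P$, so that the polynomial weight costs only a shift of $\mu$ by a multiple of $\rho_P$. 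That is the bookkeeping you allude to in your final paragraph, but the displayed integral as written does not match what you derived. Relatedly, your assertion that $c=c_\sigma>0$ is unwarranted: for a general admissible finitely generated $\sigma$ the exponent $D$ (and hence the required shift) depends on $\sigma$, and nothing prevents $c_\sigma$ from being negative. The later Theorem in the paper (your Theorem~\ref{continuation}) is precisely the statement that for square integrable $\sigma$ one may take $c_\sigma=0$, which would be vacuous if $c_\sigma>0$ held in general.
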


The next two results are the main results on the holomorphic continuation. The
first (\cite{RRGII} Theorem 15.6.5, p. 399) is

\begin{theorem}
Let $(\sigma,H_{\sigma})$ be an irreducible, admissible Hilbert representation
of $^{o}M_{P}$. Let $\lambda\in Wh_{\chi_{|^{o}M_{P}\cap N_{o}}}(H_{\sigma
}^{\infty})$. If $f\in I_{\sigma}^{\infty}$ then the map
\[
\nu\mapsto J(P,\sigma,\nu)(\lambda)(f)
\]
has a holomorphic continuation to $(\mathfrak{a}_{P}^{\ast})_{\mathbb{C}}$.
Furthermore, for all $\nu\in(\mathfrak{a}_{P}^{\ast})_{\mathbb{C}}$,
$J(P,\sigma,\nu)$ defines a linear bijection between $Wh_{\chi_{|^{o}M_{P}\cap
N_{o}}}(H_{\sigma}^{\infty})$ and $Wh_{\chi}(I_{\bar{P},\sigma,\nu}^{\infty})$.
\end{theorem}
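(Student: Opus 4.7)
The plan is to reduce the theorem for a general standard parabolic $P$ to the already-established case $P = P_o$ via induction in stages. By Casselman's subrepresentation theorem and the Casselman--Wallach theorem, the irreducible admissible $(\sigma, H_\sigma)$ of ${}^{o}M_P$ admits a continuous ${}^{o}M_P$-equivariant embedding $\iota_\sigma : H_\sigma^\infty \hookrightarrow I_{P_1, \tau, \mu}^\infty$, where $P_1 = P_o \cap {}^{o}M_P$ is the minimal parabolic of ${}^{o}M_P$, $\tau$ is a finite-dimensional representation of ${}^{o}M_o$, and $\mu \in ({}^{\ast}\mathfrak{a}^{\ast})_{\mathbb{C}}$. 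Induction in stages then yields a $G$-equivariant embedding $\tilde\iota : I_{P, \sigma, \nu}^\infty \hookrightarrow I_{P_o, \tau, \mu + \nu}^\infty$, with $\mu + \nu$ viewed in $(\mathfrak{a}_o^{\ast})_{\mathbb{C}} = ({}^{\ast}\mathfrak{a}^{\ast})_{\mathbb{C}} \oplus (\mathfrak{a}_P^{\ast})_{\mathbb{C}}$ after the appropriate $\rho$-shifts.

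Next I would transport the Jacquet integral across $\tilde\iota$. Applying the theorem to the minimal parabolic $P_1$ of ${}^{o}M_P$ (the base case) shows $J_{P_1, \tau, \mu}$ is bijective on Whittaker spaces, so every $\lambda \in Wh_{\chi|_{N_o \cap {}^{o}M_P}}(H_\sigma^\infty)$ factors uniquely as $\lambda = \lambda_\tau \circ \iota_\sigma$ for some $\lambda_\tau \in Wh_{\chi|_{N_o \cap {}^{o}M_o}}(H_\tau)$. Fubini applied to the factorization $N_o = (N_o \cap {}^{o}M_P) N_P$, valid throughout the convergence cone $(\mathfrak{a}_P^{\ast})_c^-$ of the preceding lemma, establishes the identity
\[
J_{P, \sigma, \nu}(\lambda)(f) \;=\; J_{P_o, \tau, \mu + \nu}(\lambda_\tau)(\tilde\iota(f)).
\]
The right-hand side continues holomorphically in its parameter to all of $(\mathfrak{a}_o^{\ast})_{\mathbb{C}}$ by the already-established $P_o$ case, so the left-hand side continues holomorphically in $\nu$ to all of $(\mathfrak{a}_P^{\ast})_{\mathbb{C}}$.

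For bijectivity I would compute $\dim Wh_\chi(I_{\bar P, \sigma, \nu}^\infty)$ by Bruhat theory. Stratifying $\bar P \backslash G$ by its $N_o$-orbits and filtering $I_{\bar P, \sigma, \nu}^\infty$ accordingly, each subquotient supports a $\chi$-Whittaker distribution only on the open orbit $N_P \bar P / \bar P$: on every lower orbit the stabilizer contains a simple root subgroup of $N_o$ on which $d\chi \neq 0$, and genericity of $\chi$ forces the corresponding Whittaker space to vanish. Restriction to the open cell and integration over $N_P$ against $\chi^{-1}$ identify $Wh_\chi(I_{\bar P, \sigma, \nu}^\infty)$ with $Wh_{\chi|_{N_o \cap {}^{o}M_P}}(H_\sigma^\infty)$, so the two (finite) dimensions agree. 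Injectivity of $\lambda \mapsto J_{P, \sigma, \nu}(\lambda)$ is clear in the convergence cone from the leading $A_P$-asymptotics of the integrand and extends to all $\nu$ by analytic continuation together with the matching dimensions.

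The hardest part is controlling bijectivity at reducibility points of $I_{P, \sigma, \nu}^\infty$, where a generic-$\nu$ argument fails; one must rely entirely on the Bruhat dimension count to rule out any drop in rank of $J_{P, \sigma, \nu}$, and the genericity hypothesis on $\chi$ is precisely what supplies this rigidity. A secondary technical matter is writing out the compatibility identity for Jacquet integrals under induction in stages with the correct $\rho$-normalizations, so that the holomorphic continuation of the $P_o$ case transports cleanly through $\tilde\iota$.
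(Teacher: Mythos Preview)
Your outline matches the paper's approach: the paper does not give a self-contained proof here but states that the result follows from the holomorphic continuation already established for $P=P_{o}$ together with ``some Bruhat theory,'' referring to \cite{RRGII} Theorem 15.6.5; your reduction via Casselman's subrepresentation theorem plus induction in stages, followed by the Bruhat stratification dimension count, is exactly that strategy.

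One small point worth tightening: when you write $\lambda=\lambda_{\tau}\circ\iota_{\sigma}$ with $\iota_{\sigma}$ an \emph{embedding} $H_{\sigma}^{\infty}\hookrightarrow I_{P_{1},\tau,\mu}^{\infty}$, you are asserting that every Whittaker functional on the subspace extends to one on the full principal series. Bijectivity of $J_{P_{1},\tau,\mu}$ alone does not supply this; what you need is exactness of the Whittaker functor for generic $\chi$, which is precisely the content of your later Bruhat argument (no $\chi$-Whittaker distributions supported off the open cell). Either invoke that exactness up front, or---cleaner---use the dual form of Casselman to realize $H_{\sigma}^{\infty}$ as a \emph{quotient} of a minimal principal series, so that $\lambda$ pulls back to $\lambda_{\tau}$ tautologically and the Fubini identity $J_{P,\sigma,\nu}(\lambda)(f)=J_{P_{o},\tau,\mu+\nu}(\lambda_{\tau})(\tilde{\iota}(f))$ goes through without any extension step.
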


And the second (\cite{RRGII}, Theorem 15.4.1 p. 381) whose proof and that of
the previous theorem comprise more than 20 complicated pages (pp. 382-405).

\begin{theorem}
\label{continuation}Assume that $(\sigma,H_{\sigma})$ is a square integrable
representation of $\ {}^{o}M_{P}$ then the constant $c_{\sigma}$ can be taken
to be $0$ and the map
\[
(\nu,f)\mapsto J(P,\sigma,\nu)(\lambda)(f)
\]
defines a continuous map on $(\mathfrak{a}_{P}^{\ast})_{0}^{-}\times
I_{\sigma}^{\infty}$ that is holomorphic om $\nu$ and linear in $f$. The map%
\[
\nu\mapsto J(P,\sigma,\nu)(\lambda)
\]
extends to a weakly holomorphic map from $\left(  \mathfrak{a}_{P}^{\ast
}\right)  _{\mathbb{C}}$\ to $\left(  I_{\sigma}^{\infty}\right)  ^{\prime}$.
Finally, for each $\nu\in\left(  \mathfrak{a}_{P}^{\ast}\right)  _{\mathbb{C}%
}$, $J(P,\sigma,\nu)$ defines a linear bijection between $Wh_{\chi_{|^{o}%
M_{P}\cap N_{o}}}(H_{\sigma}^{\infty})$ and $Wh_{\chi}(I_{\bar{P},\sigma,\nu
}^{\infty})$.
\end{theorem}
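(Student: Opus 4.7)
The plan is to exploit Theorem \ref{discretewhitt} applied inside ${}^{o}M_{P}$ together with Harish-Chandra's decay estimates for matrix coefficients of square integrable representations. Since ${}^{o}M_{P}$ has compact center and $\sigma$ is square integrable, Theorem \ref{discretewhitt} realizes every $\lambda \in Wh_{\chi_{|{}^{o}M_{P}\cap N_{o}}}(H_{\sigma}^{\infty})$ as $\lambda = \lambda_{w}$ for a $(K\cap M_{P})$-finite $w \in H_{\sigma}$, with the integral defining $\lambda_{w}$ already absolutely convergent. This realization is the lever that lets me push the convergence boundary from the a priori constant $c_{\sigma}$ of the preceding lemma up to the natural tempered edge $c_{\sigma}=0$.

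First I would substitute $\lambda = \lambda_{w}$ into the definition of $J_{P,\sigma,\nu}(\lambda)(f)$. Using $N_{o} = (N_{o}\cap M_{P})\cdot N_{P}$, with $\chi$ multiplicative across this factorization, the two Jacquet-type integrals combine into
\[
J_{P,\sigma,\nu}(\lambda_{w})(f) = \int_{N_{P}}\int_{N_{o}\cap M_{P}} \chi(n_{1}n)^{-1}\,\langle \sigma(n_{1})\,{}_{\bar{P}}f_{\nu}(n),\, w\rangle\, dn_{1}\, dn.
\]
Writing $n = \bar{n}_{\bar{P}}(n)\, m_{\bar{P}}(n)\, a_{\bar{P}}(n)\, k_{\bar{P}}(n)$ factors the integrand as $a_{\bar{P}}(n)^{\nu+\rho_{P}}$ times a matrix coefficient of $\sigma$ evaluated at $n_{1} m_{\bar{P}}(n)$. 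Square integrability of $\sigma$ provides the basic bound $|\langle \sigma(m)v,w\rangle| \leq C\,\Xi_{{}^{o}M_{P}}(m)$ on $K\cap M_{P}$-finite vectors, and the inner $n_{1}$-integral is then controlled by the ${}^{o}M_{P}$-analogue of Proposition \ref{Fourier} (the matrix coefficient $c_{v,w}$ lies in $\mathcal{C}({}^{o}M_{P})$). The outer $N_{P}$-integral is handled by combining Proposition \ref{basic} with the decay factor $a_{\bar{P}}(n)^{\operatorname{Re}(\nu)+\rho_{P}}$: the strict inequality $\operatorname{Re}(\nu,\alpha)<0$ on every $\alpha\in\Phi(P,A_{P})$ is precisely what absorbs the polynomial and $\rho_{P}$-losses and forces absolute convergence, uniformly on compacta in $\nu$ and continuously in $f\in I_{\sigma}^{\infty}$.

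The hard part is the bookkeeping of two nested Iwasawa/Langlands decompositions (inside ${}^{o}M_{P}$ for $n_{1}$, and relative to $\bar{P}$ for $n\in N_{P}$) in such a way that the estimates combine without slack, and the justification of Fubini. For Fubini I would first work in the smaller region $(\mathfrak{a}_{P}^{\ast})_{c_{\sigma}}^{-}$ of the preceding lemma, where absolute convergence is already available, and then use a dominated convergence argument based on the pointwise majorant above to extend to $(\mathfrak{a}_{P}^{\ast})_{0}^{-}$. This is the step most sensitive to the square integrability hypothesis, because for merely admissible $\sigma$ the matrix coefficient bound is weaker and the boundary $0$ cannot be reached.

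Given the absolute convergence and uniform majorant, holomorphy in $\nu$ on $(\mathfrak{a}_{P}^{\ast})_{0}^{-}$ is immediate by Morera applied under the integral, and joint continuity in $(\nu,f)$ follows from the same majorant. The weakly holomorphic extension to all of $\mathfrak{a}_{P}^{\ast}$ and the bijectivity of $J_{P,\sigma,\nu}$ between $Wh_{\chi_{|{}^{o}M_{P}\cap N_{o}}}(H_{\sigma}^{\infty})$ and $Wh_{\chi}(I_{\bar{P},\sigma,\nu}^{\infty})$ are then inherited from the preceding theorem: that result supplies a $\nu$-holomorphic $(I_{\sigma}^{\infty})'$-valued extension agreeing with mine on the overlap of domains by uniqueness of analytic continuation, and since both Whittaker spaces are finite-dimensional of the same $\nu$-independent dimension, bijectivity propagates to every $\nu\in\mathfrak{a}_{P}^{\ast}$.
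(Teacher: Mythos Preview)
The paper does not actually prove this theorem; it refers out to \cite{RRGII}, where the argument (together with that for the preceding Theorem~38) occupies some twenty pages and proceeds via the shift equation---tensoring with finite-dimensional representations to translate $\nu$ by multiples of $\rho_{P}$---rather than by any direct estimate.

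Your route is genuinely different and more economical. You observe that, given Theorem~38 as a black box, the only additional content of Theorem~\ref{continuation} is the improvement $c_{\sigma}=0$ together with the joint continuity on $(\mathfrak{a}_{P}^{\ast})_{0}^{-}\times I_{\sigma}^{\infty}$; the holomorphic extension and the bijectivity are already contained in Theorem~38 for every $\nu\in(\mathfrak{a}_{P}^{\ast})_{\mathbb{C}}$. For the convergence estimate you invoke Theorem~\ref{discretewhitt} on ${}^{o}M_{P}$ to write $\lambda=\lambda_{w}$, unfold the double integral over $N_{o}=(N_{o}\cap M_{P})\cdot N_{P}$, and bound the result by the Schwartz-type decay of matrix coefficients of $\sigma$ combined with the Gindikin--Karpelevich integral over $N_{P}$. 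This is exactly the mechanism the paper itself uses \emph{after} stating Theorem~\ref{continuation}, in the computation leading to Lemma~41 (there via Corollary~\ref{xi-estimate} rather than Theorem~\ref{discretewhitt}, but the two are equivalent in effect). So your convergence argument is sound and is in fact implicit in the paper, just not positioned as a proof of the theorem.

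What your approach buys is a clean separation of concerns: the hard analytic continuation is isolated in Theorem~38, and square integrability enters only to push the convergence boundary to zero. What the \cite{RRGII} shift-equation approach buys is self-containment---it does not presuppose Theorem~38 but rather produces both theorems in one development---and it is the same machinery that later yields the polynomial bound in Theorem~\ref{tempered-est}. One caution: in the paper's ordering Theorem~38 is stated first, but in \cite{RRGII} the two proofs are interleaved (15.4.1 versus 15.6.5), so you should verify that the particular proof of Theorem~38 you rely on does not itself invoke Theorem~\ref{continuation}.
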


Our next task is to derive a tempered estimate, in $\nu$, on $J(P,\sigma
,i\nu)$. Assume that $G$ has compact center. Let $P=N_{P}A_{P}{}^{o}M_{P}$ be
a $P_{o}$ standard parabolic subgroup of $G$ with standard Langlands
decomposition. As usual, $\bar{P}=\bar{N}_{P}A_{P}{}^{o}M_{P}$. If $g\in G$
then $g=\bar{n}amk$ with $\bar{n}\in\bar{N}_{P},a\in A_{P},m\in{}^{o}%
M_{P},k\in K$ and we use the notation $n_{\bar{P}}(g)=\bar{n},a_{\bar{P}%
}(g)=a,m_{\bar{P}}(g)=m$ and $k_{\bar{P}}(g)=k.$ Than as is usually noted
$n_{\bar{P}}$ and $a_{\bar{P}}$ define smooth functions but only $m_{\bar{P}%
}(g)k_{\bar{P}}(g)$ defines a function.

Fix $(\sigma,H_{\sigma})$ an irreducible square integrable representation of
$^{o}M_{P}$. \ Let $\pi_{\nu}$denote $\pi_{\bar{P},\sigma,\nu}$ and $u_{\nu}$
denote $_{\bar{P}}u_{\nu}$ for $u\in I_{\sigma}^{\infty}$\ We now estimate
$J(P,\sigma,\nu)(\nu)$ for $v\in I_{\sigma}^{\infty},\lambda\in Wh_{\chi
_{|N_{o}\cap M_{P}}}(H_{\sigma}^{\infty})$ and $\operatorname{Re}(\nu
,\alpha)<0$ for all $\alpha\in\Phi(P,A)$. We have
\[
J(P,\sigma,\nu)(\lambda)(v)=\int_{N_{P}}\chi(n)^{-1}\lambda(v_{\nu}(n))dn=
\]%
\[
\int_{N_{P}}\chi(n)^{-1}a_{\bar{P}}(n)^{\nu-\rho_{P}}\lambda(\sigma(m_{\bar
{P}}(n))v(k_{\bar{P}}(n)))dn.
\]
Thus
\[
\left\vert J(P,\sigma,\nu)(\lambda)(v)\right\vert \leq\int_{N_{P}}a_{\bar{P}%
}(n)^{\operatorname{Re}\nu-\rho_{P}}\left\vert \lambda(\sigma(m_{\bar{P}%
}(n))v(k_{\bar{P}}(n)))\right\vert dn\leq
\]%
\[
\int_{N_{P}}a_{\bar{P}}(n)^{\operatorname{Re}\nu-\rho_{P}}\Xi(a(m_{\bar{P}%
}(n))dn\left\Vert \lambda\right\Vert q(v)
\]
by Corollary \ref{xi-estimate}, here $q$ is a continuous seminorm on
$I_{\sigma}^{\infty}$. We have proved

\begin{lemma}
Let $(\sigma,H_{\sigma})$ be an irreducible square integrable representation
of $^{o}M_{P},v\in I_{\sigma}^{\infty},\lambda\in Wh_{\chi_{|N_{o}\cap M_{P}}%
}(H_{\sigma}^{\infty})$ and $\operatorname{Re}(\nu,\alpha)<0$ for all
$\alpha\in\Phi(P,A)$. Then%
\[
\left\vert J(P,\sigma,\nu)(\lambda)(v)\right\vert \leq\int_{N_{P}}a_{\bar{P}%
}(n)^{\operatorname{Re}\nu-\rho_{P}}\left\vert \lambda(\sigma(m_{\bar{P}%
}(n))v(k_{\bar{P}}(n)))\right\vert dn
\]%
\[
\leq\left\Vert \lambda\right\Vert q(v)\int_{N_{P}}a(n)^{\operatorname{Re}%
\nu-\rho_{P}}dn.
\]
with $q$ a continuous seminorm on $I_{\sigma}^{\infty}$.
\end{lemma}

Let $F$ be the irreducible finite dimensional representation of $G$ such
that${}{}^{o}M_{P}$ acts trivially on $F^{\mathfrak{n}_{P}}$ and $A_{P}$ acts
by $a^{4\rho}$. (See the example in $10.2.1$ in \cite{RRGII}). We note that
$F=F^{\mathfrak{n}_{P}}\oplus\mathfrak{\bar{n}}_{P}F$ as an $M_{P}$--module.
Thus
\[
F/\mathfrak{\bar{n}}_{P}F\cong F^{\mathfrak{n}_{P}}%
\]
as an $M_{P}$--module. We have a surjective homomorphism
\[
I_{\bar{P},\sigma,\nu-4\rho}^{\infty}\otimes F\rightarrow I_{\bar{P}%
,\sigma,\nu}^{\infty}%
\]
this induces an injective map
\[
Wh_{\chi}(I_{\bar{P},\sigma,\nu}^{\infty})\rightarrow Wh_{\chi}(I_{\bar
{P},\sigma,\nu-4\rho}^{\infty}\otimes F).
\]
We also note that we have a map
\[
\Gamma:Wh_{\chi}(I_{\bar{P},\sigma,\nu-4\rho}^{\infty})\otimes F^{\ast
}\rightarrow Wh_{\chi}(I_{\bar{P},\sigma,\nu-4\rho}^{\infty}\otimes F)
\]
(see Theorem 15.5.7 p.393 \cite{RRGII}). This map is defined on the space
$\widetilde{Wh}_{\chi}(I_{\bar{P},\sigma,\nu-4\rho}^{\infty})\otimes F^{\ast}$
with
\[
\widetilde{Wh}_{\chi}(I_{\bar{P},\sigma,\nu-4\rho}^{\infty})=\{T\in(I_{\bar
{P},\sigma,\nu-4\rho}^{\infty})^{\prime}|(X-d\chi(X)^{k})T=0,X\in
\mathfrak{n}_{o}\}
\]
which is a $\mathfrak{g}=Lie(G)$--module and is given as follows. Let
$H\in\mathfrak{a}_{P}$ be such that $\alpha(H)=1$ for $\alpha$ the non-zero
restriction of a simple root to $\mathfrak{a}_{P}$. Then the eigenvalues of
$H$ are of the form $4\rho_{P}(H)-k$ \ with $k\geq0$ and $k\in\mathbb{Z}$. We
set $F_{k}^{\ast}$ equal to the $4\rho_{P}(H)-k$ eigenspace of $H$ acting on
$F^{\ast}$ and let $p_{k}$ be the projection of $F^{\ast}$onto $F_{k}^{\ast}$.
\ Then
\[
\Gamma=\sum L_{k}(I\otimes p_{k})
\]
with $L_{k}\in U(\mathfrak{g})$ depending only on $\chi$ and $k$ (see the
definition of $\Gamma_{k}$ in 15.5.7 in \cite{RRGII}) . Thus, if $\lambda
_{1},...,\lambda_{d}$ is a basis of $Wh_{\chi_{|N_{o}\cap M_{P}}}(H_{\sigma
}^{\infty})$ and if $f_{1},...,f_{r}$ is a basis of $F^{\ast}$ compatible with
the grade (i.e. $f_{j}\in F_{k(j)}^{\ast})$ then the elements
\[
L_{k(j)}\left(  J_{P,\sigma,\nu-4\rho_{P}}(\lambda_{i})\otimes f_{j}\right)
,1\leq i\leq d,1\leq j\leq r
\]
form a basis of $Wh_{\chi}(I_{\bar{P},\sigma,\nu-4\rho}^{\infty}\otimes F)$.
This basis can be written
\[
\sum_{p,q}a_{ij},_{pq}(\nu)J(P,\sigma,\nu-4\rho_{P})(\lambda_{p})\circ
d_{ij}\otimes f_{q}%
\]
with $a_{ij,pq}$ \ a polynomial in $\nu$ and $d_{ij}$ is a differential
operator on $I_{\sigma}^{\infty}$ corresponding to the action of
$U(\mathfrak{g)}$. Let $T=T_{1}\circ T_{2}$ with
\[
T_{2}(\varphi\otimes f)(g)=\varphi(g)\otimes gv
\]
and
\[
T_{1}(h)(g)=\left(  I\otimes Q\right)  (h(g))
\]
with $Q$ the natural surjection $F\rightarrow F/\mathfrak{\bar{n}}_{P}F$.
Thus
\[
T:I_{\bar{P},\sigma,\nu-4\rho}^{\infty}\otimes F\rightarrow I_{\bar{P}%
,\sigma_{\nu-4\rho}\otimes F_{|\bar{P}}}^{\infty}%
\]
(here $\sigma_{\nu}$ is the $\bar{P}$ representation on $H_{\sigma}^{\infty}$
with $\bar{N}_{P}$ acting trivially, $A_{P}$ acting by $a\longmapsto
a^{-\rho_{P}+\nu-4\rho_{P}}$) \
\[
T_{1}(h)(g)=\left(  I\otimes Q\right)  (h(g))
\]
with $Q$ the natural surjection $F\rightarrow F/\mathfrak{\bar{n}}_{P}F$. Note
that neither of these maps depend on $\nu$.

With this in mind we have our tempered estimate:

\begin{theorem}
\label{Tempered-Estimate}There exists $m>0$ and a continuous seminorm,
$\gamma_{1}$ on $I_{\sigma}^{\infty}$ such that if $u\in I_{\sigma}^{\infty}$
then
\[
\left\vert J(P,\sigma,i\nu)(\lambda)(u)\right\vert \leq\gamma_{1}%
(u)(1+\left\Vert \nu\right\Vert )^{m}\left\Vert \lambda\right\Vert
\]
for $\nu\in\mathfrak{a}^{\ast}$.
\end{theorem}

\begin{proof}
We note that we can choose a $K$--Fr\'{e}chet, summand $Z$ of the space
$I_{\sigma}^{\infty}\otimes F$ such then $T$ is a $K$--isomorphism of $Z$ to
$I_{\sigma}^{\infty}$. Let $u_{1},...,u_{r}$. be the dual basis to the basis
$f_{1},...,f_{r}$of $F$. If $z\in Z$ then
\[
z=\sum z_{i}\otimes u_{i}.
\]
Thus
\[
J(P,\sigma,i\nu)(\lambda)T(z)=\sum_{ij}b_{ij}(\nu,\lambda)J\left(
P,\sigma,i\nu-4\rho)\right)  (\lambda_{i})\circ d_{ij}z_{j}%
\]

Applying the above Lemma we have
\[
\left\vert J(P,\sigma,i\nu)(\lambda)(T(z))\right\vert \leq\left\Vert
\lambda\right\Vert \sum_{ij}\gamma(d_{ij}z_{j})\left(  1+\left\Vert
v\right\Vert \right)  ^{m}\left\Vert \lambda_{i}\right\Vert \int_{N_{P}%
}a(n)^{-5\rho_{P}}dn.
\]
This implies that there exists a continuous seminorm, $\gamma_{1}$, on
$I_{\sigma}^{\infty}$ such that
\[
\left\vert J(P,\sigma,i\nu)(\lambda)(T(z))\right\vert \leq\left\Vert
\lambda\right\Vert \gamma_{1}(Tz)\left(  1+\left\Vert \nu\right\Vert \right)
^{m}.
\]

\end{proof}

\begin{corollary}
Let $\alpha\in\mathcal{S}(\mathfrak{a}^{\ast})$ then if $\lambda\in
Wh_{\chi_{|M_{P}\cap N_{o}}}(H_{\sigma}^{\infty}),v,w\in I_{\sigma}^{\infty}$
then
\[
\int_{\mathfrak{a}^{\ast}}J(P,\sigma,i\nu)(\lambda)(\pi_{\bar{P},\sigma,i\nu
}(g)v)\alpha(\nu)d\nu
\]
converges absolutely and defines an element of $C^{\infty}(N_{o}\backslash
G;\chi).$
\end{corollary}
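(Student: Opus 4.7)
The key point is that for each fixed $\nu$, the map $g\mapsto J_{P,\sigma,i\nu}(\lambda)(\pi_{\bar{P},\sigma,i\nu}(g)v)$ already lies in $C^{\infty}(N_{o}\backslash G;\chi)$: smoothness in $g$ follows because $v\in I_{\sigma}^{\infty}$ and $J_{P,\sigma,i\nu}(\lambda)\in (I_{\sigma}^{\infty})'$, while the Whittaker transformation law is automatic since $J_{P,\sigma,i\nu}(\lambda)\in Wh_{\chi}(I_{\bar{P},\sigma,i\nu}^{\infty})$ by Theorem \ref{continuation}. Thus it suffices to establish absolute convergence of the $\nu$--integral uniformly in $g$ on compacta, and to justify differentiation under the integral sign.

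For absolute convergence of the first integral, apply Theorem \ref{tempered-est} with $u=\pi_{\bar{P},\sigma,i\nu}(g)v$ to get
\[
|J_{P,\sigma,i\nu}(\lambda)(\pi_{\bar{P},\sigma,i\nu}(g)v)|\leq\|\lambda\|(1+\|\nu\|)^{m}\,\gamma_{1}(\pi_{\bar{P},\sigma,i\nu}(g)v).
\]
Since $\gamma_{1}$ is a continuous seminorm on the Fr\'echet space $I_{\sigma}^{\infty}$, it is dominated by a finite sum of sup-norms $\sup_{k\in K}\|x_{j}f(k)\|$ for a finite collection $x_{j}\in U(Lie(G))$. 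A Poincar\'e--Birkhoff--Witt decomposition along $\bar{\mathfrak{n}}_{P}\oplus\mathfrak{a}_{P}\oplus{}^{o}\mathfrak{m}_{P}\oplus\mathfrak{k}$, together with the fact that $\mathfrak{a}_{P}$ acts on $_{\bar{P}}v_{i\nu}$ by the character $i\nu-\rho_{P}$, shows that $\gamma_{1}(\pi_{\bar{P},\sigma,i\nu}(g)v)$ is bounded, uniformly for $g$ in any compact subset $C\subset G$, by $M_{C}\,(1+\|\nu\|)^{N}\,p(v)$ for some continuous seminorm $p$ on $I_{\sigma}^{\infty}$ and integer $N$ depending only on $\gamma_{1}$. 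Combined with the Schwartz decay of $\alpha$, the integrand is therefore dominated by $M_{C}\|\lambda\|\,p(v)\,(1+\|\nu\|)^{m+N}|\alpha(\nu)|$, which is integrable on $\mathfrak{a}^{\ast}$.

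To obtain smoothness, for any $X\in U(Lie(G))$ the $R_{X}$--derivative of the integrand equals
\[
J_{P,\sigma,i\nu}(\lambda)\bigl(\pi_{\bar{P},\sigma,i\nu}(g)\,\pi_{\bar{P},\sigma,i\nu}(X)v\bigr)\,\alpha(\nu),
\]
which satisfies the same polynomial-in-$\nu$ bound after absorbing the extra $(1+\|\nu\|)^{\deg X}$ factor coming from the $\mathfrak{a}_{P}$--part of $X$. This dominated-convergence argument legitimates differentiation under the integral, so the result is smooth in $g$. The Whittaker transformation law passes under the integral trivially, proving the first assertion.

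For the second integral the identical argument applies, substituting the preceding corollary for Theorem \ref{tempered-est}: the bound
\[
|J_{P,\sigma,i\nu}(\lambda_{j_{\sigma,i\nu}(w)})(\pi_{\bar{P},\sigma,i\nu}(g)v)|\leq q_{1}(w)\,q_{2}(\pi_{\bar{P},\sigma,i\nu}(g)v)\,(1+\|\nu\|)^{d}
\]
combined with the same polynomial estimate on $q_{2}(\pi_{\bar{P},\sigma,i\nu}(g)v)$ produces an integrable dominant. The main technical obstacle is the uniform-in-compacta, polynomial-in-$\nu$ control of continuous seminorms of $\pi_{\bar{P},\sigma,i\nu}(g)v$; once this is in hand, everything reduces to the Schwartz decay of $\alpha$ absorbing the polynomial factors in $\nu$ that arise both from Theorem \ref{tempered-est} and from repeated differentiation.
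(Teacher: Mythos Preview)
Your argument is correct and is precisely the reasoning the paper leaves implicit: this corollary is stated in the paper without proof, as an immediate consequence of the polynomial-in-$\nu$ bounds of Theorem~\ref{tempered-est} and the preceding corollary together with the Schwartz decay of $\alpha$. One small expository point: the seminorm $\gamma_{1}$ is a continuous seminorm on the compact picture $I_{\sigma}^{\infty}$ and hence is dominated by finitely many seminorms of the form $f\mapsto\sup_{k\in K}q(Xf(k))$ with $X\in U(\mathrm{Lie}(K))$ and $q$ a seminorm on $H_{\sigma}^{\infty}$ (not $U(\mathrm{Lie}(G))$-derivatives, which would reintroduce $\nu$-dependence); from this description the polynomial bound on $\gamma_{1}(\pi_{\bar P,\sigma,i\nu}(g)v)$ for $g$ in compacta follows directly, since the only $\nu$-dependence of $(\pi_{\bar P,\sigma,i\nu}(g)v)(k)$ is the unimodular factor $a_{\bar P}(kg)^{i\nu}$, whose $K$-derivatives contribute polynomial powers of $\nu$.
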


We will see that if $\alpha\in\mathcal{S}(\mathfrak{a}^{\ast})\mu
(\sigma,i\cdot)$ then the integral in the Corollary defines an element of
$\mathcal{C}(N_{o}\backslash G;\chi)$.

\begin{theorem}
\label{J-estimate}Let $\omega\subset\mathfrak{a}_{P}^{\ast}$ be a compact set
and let $u\in(I_{\sigma}^{\infty})_{K}$then there exist $r>0$ , C$_{u}$, and
$d$ such that if then
\[
\left\vert J(P,\sigma,i\nu)(\lambda)(\pi_{\bar{P},\sigma,i\nu}%
(ak)u)\right\vert \leq C_{u}\left\Vert \lambda\right\Vert a^{\rho_{o}%
}(1+\left\Vert \log a\right\Vert )^{d}%
\]
for $\nu\in\mathfrak{\omega}$ and $a\in A_{o},k\in K$.
\end{theorem}

\begin{proof}
If $\nu$ is fixed then the estimate
\[
\left\vert J(P,\sigma,i\nu)(\lambda)(\pi_{\bar{P},\sigma,i\nu}%
(ak)u)\right\vert \leq C_{u,\nu}\left\Vert \lambda\right\Vert a^{\rho_{o}%
}(1+\left\Vert \log a\right\Vert )^{d}%
\]
follows from the tameness (see 15.2.1 \cite{RRGII}) of the elements of
$Wh_{\chi}(I_{\bar{P}_{o},\sigma,i\nu}^{\infty})$. Indeed, for each minimal
parabolic $Q$ in the Weyl group orbit of $P_{o}$ one gets the estimate
\[
\left\vert J(P,\sigma,i\nu)(\lambda)(\pi_{\bar{P},\sigma,i\nu}%
(ak)u)\right\vert \leq C_{\nu,Q}(u)\left\Vert \lambda\right\Vert a^{-\rho_{Q}%
}(1+\left\Vert \log a\right\Vert )^{d}%
\]
for $a\in Cl(A_{Q}^{+})$ using the methods of 15.2.5 (here we are using the
fact that $\pi_{\bar{P},\sigma.i\nu}$ is tempered, see 5.1, especially the
beginning of 5.1.1 in \cite{RRGI}) and note that
\[
a^{-\rho_{Q}}\leq a^{\rho_{o}}%
\]
if $a\in Cl(A_{Q}^{+})$. It is not hard to see that the constants involved in
the definition of tameness in 15.2.1 of \cite{RRGII} can be taken constant in
$\nu$ in compacta. One now proves the theorem by establishing the dependence
on parameters of the asymptotic parameters using the method of 12.4.8 ,12.4.9
and 12.4.10 on the argument in 15.2.4 used to prove Theorem 15.2.5 all in
\cite{RRGII}. Here the key aspect of the argument is to prove the estimates in
the end of 12.4.9 and 12.4.10 using the same methodology involving systems
ordinary differential equations depending on parameters as in 12.4.8.
\end{proof}

\section{Whittaker cusp forms and the Harish-Chandra discrete series}

\begin{theorem}
\label{Inner-product-Psi}Let $P$ be a standard parabolic subgroup of $G$. Let
$(\sigma,H_{\sigma})\in\lbrack\sigma]\in\mathcal{E}_{2}({}^{o}M_{P})$. For
$\lambda\in Wh_{\chi_{|^{o}M_{P}\cap N_{o}}}(H_{\sigma}^{\infty})$, and
$\ \alpha\in C_{c}^{\infty}\mathcal{(}\mathfrak{a}_{P}^{\ast})$ set for
$v\in(I_{\sigma}^{\infty})_{K}$
\[
\Psi(P,\alpha,\sigma,\lambda,v)(g)=\int_{\mathfrak{a}_{P}^{\ast}}%
J(P,\sigma,i\nu)(\lambda)(\pi_{\overline{P},\sigma,i\nu}(g)v)\alpha(\nu)d\nu.
\]
If $f\in\mathcal{C}(N_{o}\backslash G,\chi)$ then
\[
\int_{N_{o}\backslash G}\overline{\Psi(P,\alpha,\sigma,\lambda,v)(g)}f(g)dg=
\]
\[
\int_{N_{o}\cap M_{P}\backslash{}^{o}M_{P}\times K\times A}(\int
_{\mathfrak{a}_{P}^{\ast}}a^{-i\nu}\overline{\alpha(\nu)\lambda\left(
\sigma(m)v(k)\right)  }(R(k)f)^{P}(ma)d\nu)dmdkda.
\]

\end{theorem}

\begin{proof}
Theorem \ref{J-estimate} implies that the left hand side of the equation
converges and defines a continuous functional on $\mathcal{C}(N_{o}\backslash
G,\chi)$. Also Theorem \ref{15.3.Replace} implies that the right hand side of
the equation defines a continuous functional in $f$. Thus both sides of this
equation are continuous in $f$ so it is enough to prove the equation for
$\left\vert f\right\vert \in C_{c}(N_{o}\backslash G)$. Set $\rho=\rho_{P}$.
If $z\in\mathbb{C}$ set
\[
\varphi(z,g)=\int_{\mathfrak{a}_{P}^{\ast}}J(P,\sigma,i\nu-z\rho)(\lambda
)(\pi_{\overline{P},\sigma,i\nu-z\rho}(g)v)\alpha(\nu)d\nu
\]
which is entire in $z.$Also
\[
\int_{N_{o}\backslash G}\overline{\varphi(z,g)}f(g)dg
\]
is entire in $\bar{z}$. Thus
\[
\int_{N_{o}\backslash G}\overline{\Psi(P,\alpha,\sigma,\lambda,v)(g)}%
f(g)dg=\lim_{\varepsilon\rightarrow+0}\int_{N_{o}\backslash G}\overline
{\varphi(\varepsilon,g)}f(g)dg.
\]
If $\varepsilon>0$ then
\[
\int_{N_{o}\backslash G}\overline{J(P,\sigma,i\nu-\varepsilon\rho
)(\lambda)(\pi_{\overline{P},\sigma,i\nu-\varepsilon\rho}(g)v)}f(g)dg=
\]%
\[
\int_{N_{o}\backslash G}\overline{\lambda\left(  \int_{N_{P}}\chi
(n)^{-1}v_{i\nu-\varepsilon\rho}(ng)dn\right)  }f(g)dg.
\]
Here, the inner integral is convergent by Lemma 39. We note that $N_{0}%
=N_{P}(N_{o}\cap M_{P})$ and the expression $n_{o}=nn^{\ast}$ with $n_{P}\in
N_{P}$ and $n^{\ast}\in N_{o}\cap M_{P}$ is unique. Now,
\[
\int_{N_{o}\backslash G}\overline{\lambda\left(  \int_{N_{P}}\chi
(n)^{-1}v_{i\nu-\varepsilon\rho}(ng)dn\right)  }f(g)dg=
\]%
\[
\int_{N_{o}\backslash G}\overline{\lambda\left(  \int_{N_{P}}v_{i\nu
-\varepsilon\rho}(ng)\overline{f(ng)}dn\right)  }dg=\int_{N_{o}\cap
M_{P}\backslash G}\overline{\lambda\left(  \int v_{i\nu-\varepsilon\rho
}(g)\right)  }f(g)dg.
\]
We now include the integral over $\alpha$ and have
\[
\int_{\bar{N}_{P}\times N_{o}\cap M_{P}\backslash{}^{o}M_{P}\times A_{P}\times
K\times\mathfrak{a}_{P}^{\ast}}\overline{\alpha(\nu)}a^{2\rho_{P}}%
\overline{\lambda(v_{i\nu-\varepsilon\rho}(\bar{n}mak))}f(\bar{n}mak)d\bar
{n}dmdadkd\nu=
\]%
\[
\int_{\bar{N}_{P}\times N_{o}\cap M_{P}\backslash{}^{o}M_{P}\times A_{P}\times
K\times\mathfrak{a}_{P}^{\ast}}\overline{\alpha(\nu)}a^{2\rho}a^{\rho_{P}%
-i\nu-\varepsilon\rho}\overline{\lambda(\sigma(m)v(k))}\times
\]%
\[
\left(  R(k)f\right)  (\bar{n}ma)d\bar{n}dmdadkd\nu=
\]%
\[
\int_{N_{o}\cap M_{P}\backslash{}^{o}M_{P}\times A\times K\times
\mathfrak{a}_{P}^{\ast}}a^{-\varepsilon\rho}\int_{\mathfrak{a}^{\ast}%
}\overline{\alpha(\nu)}a^{-i\nu}d\nu\overline{\lambda(\sigma(m)v(k))}\left(
R(k)f\right)  ^{P}(ma)dmdadk.
\]
Lemma \ref{compact-estimate} implies that there exists $C>0$ such that if
$a\in A_{P}$ and if $m\in{}^{o}M_{P}$ and if $\left(  R(k)f\right)
^{P}(ma)\neq0$ then $a^{\rho}\geq C>0$. Thus $a^{-\varepsilon\rho}\leq
C^{-\varepsilon}$ so we can apply dominated convergence the limit as
$\varepsilon\rightarrow+0$ to deduce the theorem.
\end{proof}

\begin{lemma}
Let $\nu_{o}\in\mathfrak{a}_{P}^{\ast}$ and $\alpha_{j}\in C_{c}^{\infty
}(\mathfrak{a}_{P}^{\ast})$ for $j=1,2,...$be non-negative valued functions satisfying

a) supp $\alpha_{j+1}\subset$ supp $\alpha_{j}$ and $\cap_{j\geq1}$supp
$\alpha_{j}=\{\nu_{o}\}.$

b) $\int_{\mathfrak{a}_{P}^{\ast}}\alpha_{j}(\nu)d\nu=1$.

In the notation of the previous theorem set
\[
T_{j}(f)=\int_{N_{o}\backslash G}\overline{\Psi(P,\alpha_{j},\sigma
,\lambda,\nu)(g)}f(g)dg
\]
for $f\in\mathcal{C}(N_{o}\backslash G,\chi)$. Then
\[
\lim_{j\rightarrow\infty}T_{j}(f)=\int_{N_{o}\backslash G}\overline
{J(P,\sigma,i\nu_{o})(\lambda)(\pi_{\overline{P},\sigma,i\nu_{o}}%
(g)v)}f(g)dg.
\]

\end{lemma}

\begin{proof}
Let $\omega$ be the support of $\alpha_{1}$ then Theorem \ref{J-estimate}
implies that there exists a constants $C$,$d$ depending on $\omega$ and $v$
such that such that
\[
\left\vert J(P,\sigma,i\nu_{o})(\lambda)(\pi_{\overline{P},\sigma,i\nu_{o}%
}(ak)v)\right\vert \leq Ca^{\rho}(1+\log\left\Vert a\right\Vert )^{d}.
\]
We write
\[
T_{j}(f)-\int_{N_{o}\backslash G}\overline{J(P,\sigma,i\nu_{o})(\lambda
)(\pi_{\overline{P},\sigma,i\nu_{o}}(g)v)}f(g)dg=
\]%
\[
\int_{N_{o}\backslash G}\int_{\mathfrak{a}_{P}^{\ast}}(J(P,\sigma
,i\nu)(\lambda)(\pi_{\overline{P},\sigma,i\nu}(g)v)-J(P,\sigma,i\nu
_{o})(\lambda)(\pi_{\overline{P},\sigma,i\nu_{o}}(g)v))\alpha_{j}(\nu)d\nu
f(g)dg
\]
To prove the limit formula we note that if $\varepsilon>0$ there exists a
compact subset $U\subset N_{o}\backslash G$ such that
\[
\left\vert \int_{N_{o}\backslash G-U}\int_{\mathfrak{a}_{P}^{\ast}}%
(J(P,\sigma,i\nu)(\lambda)(\pi_{\overline{P},\sigma,i\nu}(g)v)-J(P,\sigma
,i\nu_{o})(\lambda)(\pi_{\overline{P},\sigma,i\nu_{o}}(g)v))\alpha_{j}%
(\nu)d\nu f(g)dg\right\vert <\frac{\varepsilon}{2}%
\]
for all $j$. Indeed for each $r$~$>0$ there exists $B$ such that if $\nu\in$
supp $a_{1}$ then
\[
\left\vert J(P,\sigma,i\nu)(\pi_{\overline{P},\sigma,i\nu}(g)v)-J(P,\sigma
,i\nu_{o})(\lambda)(\pi_{\overline{P},\sigma,i\nu_{o}}(g)v))\right\vert
\]%
\[
\leq Ba_{P_{o}}(g)^{\rho}(1+\log\left\Vert a_{P_{o}}(g)\right\Vert )^{d}%
\]
Thus for each $r$ there exists $B_{r}$ such that
\[
\left\vert J(P,\sigma,i\nu)(\lambda)(\pi_{\overline{P},\sigma,i\nu
}(g)v)-J(P,\sigma,i\nu_{o})(\lambda)(\pi_{\overline{P},\sigma,i\nu_{o}%
}(g)v)\right\vert \left\vert f(g)\right\vert
\]%
\[
\leq B_{r}a(g)^{2\rho}(1+\log\left\Vert a(g)\right\Vert )^{d-r}.
\]
Let $r$ be large enough that
\[
\int_{N_{o}\backslash G}a(g)^{2\rho}(1+\log\left\Vert a(g)\right\Vert
)^{d-r}dg<\infty.
\]
If $\delta>0$ is given then there exists $U_{\delta}\subset N_{o}\backslash G
$ compact such that
\[
\int_{N_{o}\backslash G-U_{\delta}}a(g)^{2\rho}(1+\log\left\Vert
a(g)\right\Vert )^{d-r}dg<\delta.
\]
Thus
\[
\left\vert \int_{N_{o}\backslash G-U_{\delta}}\int_{\mathfrak{a}_{P}^{\ast}%
}(J(P,\sigma,i\nu)(\lambda)(\pi_{\overline{P},\sigma,i\nu}(g)v)-J(P,\sigma
,i\nu_{o})(\lambda)(\pi_{\overline{P},\sigma,i\nu_{o}}(g)v))\alpha_{j}%
(\nu)d\nu f(g)dg\right\vert \leq
\]%
\[
\int_{N_{o}\backslash G-U_{\delta}}\int_{\mathfrak{a}_{P}^{\ast}}\left\vert
J(P,\sigma,i\nu)(\lambda)(\pi_{\overline{P},\sigma,i\nu}(g)v)-J(P,\sigma
,i\nu_{o})(\pi_{\overline{P},\sigma,i\nu_{o}}(g)v))\right\vert \alpha_{j}%
(\nu)d\nu\left\vert f(g)\right\vert dg\leq
\]%
\[
B_{r}\delta\int_{\mathfrak{a}_{P}^{\ast}}\alpha_{j}(\nu)d\nu=B_{r}\delta.
\]
Let $\delta$ be such that $B_{r}\delta<\frac{\varepsilon}{2}$ and
$U=U_{\delta}$. To complete the proof of the lemma we must prove that there
exists $j_{o}$ such that if $j\geq j_{o}$ then
\[
\left\vert \int_{U}\int_{\mathfrak{a}_{P}^{\ast}}(J(P,\sigma,i\nu
)(\pi_{\overline{P},\sigma,i\nu}(g)v)-J(P,\sigma,i\nu_{o})(\pi_{\overline
{P},\sigma,i\nu_{o}}(g)v))\alpha_{j}(\nu)d\nu f(g)dg\right\vert <\frac
{\varepsilon}{2}.
\]
Since $U\times$ supp $\alpha_{1}$ is compact and
\[
g,\nu\mapsto J(P,\sigma,i\nu)(\pi_{\overline{P},\sigma,i\nu}(g)v)-J(P,\sigma
,i\nu_{o})(\pi_{\overline{P},\sigma,i\nu_{o}}(g)v))\alpha_{j}(\nu)f(g)
\]
is continuous a standard compactness argument proves that given $\mu>0$ there
exist $V$ a neighborhood of $\nu_{o}$ in $\mathfrak{a}_{P}^{\ast}$ such that
\[
\left\vert (J(P,\sigma,i\nu)(\lambda)(\pi_{\overline{P},\sigma,i\nu
}(g)v)-J(P,\sigma,i\nu_{o})(\lambda)(\pi_{\overline{P},\sigma,i\nu_{o}%
}(g)v))f(g)\right\vert <\mu
\]
if $\left(  g,v\right)  \in U\times V.$ Thus if the support of $\alpha_{j_{o}%
}\subset V$ and if $j\geq j_{o}$ then
\[
\left\vert \int_{U}\int_{\mathfrak{a}_{P}^{\ast}}(J(P,\sigma,i\nu
)(\lambda)(\pi_{\overline{P},\sigma,i\nu}(g)v)-J(P,\sigma,i\nu_{o}%
)(\lambda)(\pi_{\overline{P},\sigma,i\nu_{o}}(g)v))\alpha_{j}(\nu)d\nu
f(g)dg\right\vert
\]%
\[
\leq\mu\int_{\mathfrak{a}_{P}^{\ast}}\alpha_{j}(\nu)d\nu.
\]
So, (say) take $\mu=\frac{\varepsilon}{4}.$
\end{proof}

\begin{theorem}
If $(\pi,H)$ is an irreducible unitary subrepresentation of $L^{2}%
(N_{o}\backslash G,\chi)$ then there exists an irreducible square integrable
representation $(\sigma,V)$ and $\lambda\in Wh_{\chi}(V^{\infty})$ such that
\[
H^{\infty}=\{g\mapsto\lambda(\sigma(g)v)|v\in V^{\infty}\}.
\]

\end{theorem}

\begin{proof}
We have seen in \cite{RRGII} Lemma 15.1.1 p. 365 that the support of
$L^{2}(N_{o}\backslash G,\chi)$ as an abstract representation is contained in
the set of tempered representations. Thus it is enough to show that if
$(\sigma,V)$ is an irreducible tempered representation such that there exists
a unitary intertwining operator $L:V\rightarrow L^{2}(N_{o}\backslash G,\chi)
$ then $\sigma$ must be square integrable on $G$. Assume the contrary. We will
show that this leads to a contradiction. We note that if we set for $v\in
V^{\infty}$, $\lambda(v)=L(v)(e)$ ($e$ the identity element of $G$) then
$\lambda\in Wh_{\chi}(V^{\infty})$ and $L(v)(g)=$ $\lambda(\pi(g)v)$ defines
an element of $^{o}\mathcal{C}(N_{o}\backslash G;\chi)$ (Theorem
\ref{Discrete+Cuspform}). Since $(\sigma,V)$ is tempered there exists
$(P\,A_{P}) $ a $P_{o}$ standard parabolic pair with $P\neq G,$ $(\mu.H_{\mu
})$ a square integrable irreducible representation of $\,{}^{o}M_{P}$ and
$\nu_{o}\in\mathfrak{a}_{P}^{\ast}$ such that $(\sigma,V) $ is equivalent with
a direct summand of $I_{\bar{P},\sigma,i\nu_{o}}$ (c.f, \cite{RRGI}
Proposition 5.2.5) thus Theorem \ref{continuation} implies that there exists
$u\in I_{\bar{P},\mu,i\nu_{o}}^{\infty}$ and $\tau\in Wh_{\chi_{|M_{P}\cap
N_{o}}}(H_{\mu}^{\infty})$ such that
\[
L(v)(g)=J(P,\sigma,i\nu)(\tau)(\pi_{\bar{P},\mu,i\nu_{o}}(g)u),g\in G\text{.}%
\]
We have in the notation of the previous lemma
\[
\lim_{j\rightarrow\infty}\int_{N_{o}\backslash G}\overline{\Psi(P,\alpha
_{j},\mu,u,\tau)(g)}L(u)(g)dg=
\]%
\[
\int_{N_{o}\backslash G}\overline{J(P,\sigma,i\nu_{o})(\tau)(\pi_{\bar
{P},\sigma,i\nu_{o}}(g)u)}L(u)(g)dg=\left\Vert L(u)\right\Vert ^{2}.
\]
On the other hand by the preceding theorem
\[
\int_{N_{o}\backslash G}\overline{\Psi(P,\alpha_{j},\mu,v,\tau)(g)}L(u)(g)dg=
\]%
\[
\int_{N_{o}\cap M_{P}\backslash{}^{o}M_{P}\times K\times A}(\int
_{\mathfrak{a}_{P}^{\ast}}a^{-i\nu}\overline{\alpha_{j}(\nu)\lambda\left(
\sigma(m)v(k)\right)  }(R(k)L(u))^{P}(ma)d\nu)dmdkda.
\]
Which is $0$ because $L(u)\in{}^{o}\mathcal{C}(N_{o}\backslash G;\chi)$ . This
is the desired contradiction completing the proof of the theorem.
\end{proof}

\section{\label{Firststeps}First steps on the continuous spectrum}

Beuzart-Plessis' Proposition B.3.1 in \cite{raphael} has as an immediate consequence

\begin{proposition}
\label{fundamental}Let $P_{o}$ \ be a minimal parabolic subgroup of $G$ and
$N_{o}$ (as usual) its unipotent radical
\[
\int_{\lbrack N_{o},N_{o}]}a_{\bar{P}_{o}}(n)^{-\rho_{P_{o}}}(1+\log\left\Vert
n\right\Vert )^{r}dn<\infty
\]
for \ all $r$.
\end{proposition}

Note that if the domain of integration is $N_{o}$ rather than $[N_{o},N_{o}]$
then
\[
\int_{N_{o}}a_{\bar{P}_{o}}(n)^{-\rho_{P_{o}}}dn=\infty
\]
The result in \cite{raphael} is actually much stronger. We have stated the
result in this form because it is all that we need and we have an elementary
proof of this result for $GL(n,\mathbb{R})$ and $GL(n,\mathbb{C})$ and real
reductive groups of split rank 1 and several groups of rank 2 is given in
\cite{tame}. Since this result will be the basis of our determination of the
continuous spectrum the use of this form of the proposition leads to an
elementary proof for the important special case of $GL(n)$.

We will be using notation from the material preceding Lemma 1 in section 2 in
particular $\left\Vert ...\right\Vert $ is the standard norm defined therein.
Also, $v$ will denote a unit vector in $\wedge^{\dim N_{o}}Lie(\bar{N}_{o})$.

\begin{corollary}
\label{polyest}Let $Lie(N_{o})=Lie[N_{o},N_{o}]\oplus Z$ with $Ad(A_{o})Z=Z$.
There exists $s\in\mathbb{R}$ and $\ $for each $\omega\subset G$ a compact
subset a constant $C_{\omega}$ such that if $X\in Z$ and if $g\in\omega$ such
that if $d$ is given there exists $C_{d}$such that
\[
\int_{\lbrack N_{o},N_{o}]}a_{\bar{P}}(n\exp Xg)^{-\rho_{o}}(1+\log\left\Vert
n\right\Vert )^{d}dn\leq C_{\omega}C_{d}(1+\left\Vert X\right\Vert )^{s}.
\]

\end{corollary}

\begin{proof}
If $v$ is as as above and if $g\in G$ then
\[
\left\Vert g^{-1}v\right\Vert =a_{\bar{P}_{o}}(g)^{2\rho_{o}}.
\]
Thus if $n\in\lbrack N_{o},N_{o}],g\in\omega,X\in Z$ then
\[
a_{\bar{P}_{o}}(n\exp Xg)^{2\rho_{o}}=\left\Vert g^{-1}\exp(-X)n^{-1}%
v\right\Vert .
\]
The inequality
\[
\left\Vert n^{-1}v\right\Vert =\left\Vert \exp Xgg^{-1}\exp(-X)n^{-1}%
v\right\Vert \leq\left\Vert \exp Xg\right\Vert \left\Vert g^{-1}\exp
(-X)n^{-1}v\right\Vert
\]
implies that%
\[
\left\Vert g^{-1}\exp(-X)n^{-1}v\right\Vert \geq\left\Vert \exp Xg\right\Vert
^{-1}\left\Vert n^{-1}v\right\Vert
\]
so
\[
a_{\bar{P}_{o}}(n\exp Xg)^{-\rho_{o}}\leq\left\Vert \exp Xg\right\Vert
^{\frac{1}{2}}a_{\bar{P}_{o}}(n)^{-\rho_{o}}\leq\left\Vert \exp X\right\Vert
^{\frac{1}{2}}\left\Vert g\right\Vert ^{\frac{1}{2}}a_{\bar{P}_{o}}%
(n)^{-\rho_{o}}.
\]
Noting that $\left\Vert \exp X\right\Vert ^{2}$ is a polynomial in $X$ the
Corollary follows.
\end{proof}

We note that if $P$ is a standard parabolic subgroup then

\begin{lemma}
\label{simpleL}There exist $k$ and $C>0$ such that $\left\Vert ^{o}m_{\bar{P}%
}(n)\right\Vert \leq C\left\Vert n\right\Vert ^{k}$ for $n\in N_{P}$.
\end{lemma}

\begin{proof}
$n=\bar{n}_{\bar{P}}(n)a_{\bar{P}}(n){}^{o}m_{\bar{P}}(n)k_{\bar{P}}(n)$ so we
have
\[
\left\Vert n\right\Vert =\left\Vert \bar{n}_{\bar{P}}(n)a_{\bar{P}}(n){}%
^{o}m_{\bar{P}}(n)\right\Vert \geq\left\Vert a_{\bar{P}}(n){}^{o}m_{\bar{P}%
}(n)\right\Vert \geq\left\Vert a_{\bar{P}}(n)\right\Vert ^{-1}\left\Vert
^{o}m_{\bar{P}}(n)\right\Vert
\]
thus
\[
\left\Vert a_{\bar{P}}(n)\right\Vert \left\Vert n\right\Vert \geq\left\Vert
^{o}m_{\bar{P}}(n)\right\Vert .
\]
Note that
\[
\left\Vert a_{\bar{P}}(n)\right\Vert \leq C_{1}\left\Vert n\right\Vert ^{r}%
\]
for some $C_{1}>0$ and $r>0$. Take $k=r+1$ and $C=C_{1}$.
\end{proof}

\begin{lemma}
Let $\left(  \pi,H_{\pi}\right)  $ be an admissible Hilbert representation of
$G$ and let $\chi$ be a generic character of $N_{o}$. Then there exists a
finite subset $S_{\chi,\pi}=\{x_{1},x_{2},...,x_{r}\}\subset U(Lie(G))$ such
that if $H_{\chi,\pi}$ is the Hilbert space completion of $(H_{\pi}^{\infty
})_{K}$ with respect to the inner product
\[
\left\langle v,w\right\rangle _{\chi,\pi}=\sum_{i=1}^{r}\left\langle
x_{i}v,x_{i}w\right\rangle _{\pi}+\left\langle v,w\right\rangle _{\pi}%
\]
and if $\lambda\in Wh_{\chi}(H_{\pi}^{\infty})$ then $\lambda$ extends to a
continuous functional on $H_{\chi,\pi}$. Furthermore, $H_{\chi,\pi}$ is
$\pi(G)$ invariant and $(\pi,H_{\chi,\pi})$ is a Hilbert representation of $G$.
\end{lemma}

\begin{proof}
If $\mu$ is a continuous functional on $H_{\pi}^{\infty}$ then by the
definition of the Fr\'{e}chet space structure on $H^{\infty}$ there exist
$u_{i}\in U(\mathfrak{g}),$ $i=1,...,d$ and a constant $C$ such that
\[
|\mu(v)|\leq C\sum_{i=1}^{d}\left\Vert u_{i}v\right\Vert \leq\sqrt{d}%
C\sum_{i=1}^{d}\left\langle u_{i}v,u_{i}v\right\rangle
\]
for $v\in H^{\infty}$. Let $\lambda_{1},...,\lambda_{m}$ be a basis of
$Wh_{\chi}(H_{\pi}^{\infty})$. Then for each $i$ choose a finite set
$S_{i}\subset U(Lie(G))$ such that
\[
\left\vert \lambda_{i}(v)\right\vert \leq C_{i}\sum_{x\in S_{i}}\left\langle
xv,xw\right\rangle
\]
for $v\in H^{\infty}$. Take $S_{\chi,\pi}=\cup S_{i}$.
\end{proof}

Let $\chi$ be a generic character of $N_{o}$ and let $P$ be a standard
parabolic subgroup of $G$ and\ let $\left(  \sigma,H_{\sigma}\right)  $ be an
irreducible square integrable representation of {}$^{o}M_{\bar{P}}$. We
consider the integral if $u\in I_{\sigma}^{\infty}$
\[
j_{\sigma,\mu}(u)=\int_{N_{P}}\chi(n)^{-1}u_{\mu}(n)dn.
\]
Here and in the rest of this section if $P$ and $\sigma$ are understood then
we write $u_{\mu}$ for $_{\bar{P}}u_{\sigma,\mu}.$

We will now apply the above lemmas with $G$ replaced by $^{o}M_{P}$ and $\pi$
replaced by $\sigma$ a square integrable representation of $^{o}M_{P}$. To
simplify notation we will use the notation $\chi^{\ast}$ for $\chi_{|N_{o}\cap
M_{P}}$.

\begin{lemma}
\label{key-estimate}Let $\left\langle ...,...\right\rangle _{\chi^{\ast
},\sigma}$ be as in the previous lemma for $\sigma$. There exist constants
$a,b,C>0$ such that such that if $\mu\in\left(  \mathfrak{a}_{P}\right)
_{\mathbb{C}}^{\ast}$ and $\operatorname{Re}(\mu,\alpha)<-t(\rho_{P},\alpha)$
with $t>0$ for for $\alpha\in\Phi(P,A_{P})$ then%
\[
\left\Vert u_{\mu}(n)\right\Vert _{\chi^{\ast},\sigma}\leq C\left\Vert
n\right\Vert ^{-a(1+t)}\left\Vert n\right\Vert ^{b}%
\]
in particular this implies that there exists $c_{\sigma}$ such that the
integral
\[
\int_{N_{P}}\left\Vert u_{\mu}(n)\right\Vert _{\chi^{\ast},\sigma}dn
\]
converges absolutely and uniformly in compacta of the set
\[
\{\mu|\operatorname{Re}(\mu,\alpha)<-c_{\sigma}(\rho_{P},\alpha),\alpha\in
\Phi(P,A_{P})\}
\]
for $u\in I_{\sigma}^{\infty}$.
\end{lemma}

\begin{proof}
We have for $n\in N_{P}$
\[
u_{\mu}(n)=a_{\bar{P}}(n)^{-\rho+\mu}\sigma({}^{o}m_{\bar{P}}(n))u(k_{\bar{P}%
}(n)),
\]
thus
\[
\left\Vert u_{\mu}(n)\right\Vert _{\chi^{\ast},\sigma}\leq a_{\bar{P}%
}(n)^{-\rho+\operatorname{Re}\mu}\left\Vert \sigma({}^{o}m_{\bar{P}%
}(n))\right\Vert _{\chi^{\ast}.\sigma}\left\Vert u(k_{\bar{P}}(n))\right\Vert
_{\chi^{\ast},\sigma}.
\]
Thus
\[
\left\Vert u_{\mu}(n)\right\Vert _{\chi^{\ast},\sigma}\leq C_{2}a_{\bar{P}%
}(n)^{-\rho+\operatorname{Re}\mu}\left\Vert \sigma({}^{o}m_{\bar{P}%
}(n))\right\Vert _{\chi^{\ast}.\sigma}\leq C_{2}a_{\bar{P}}(n)^{-\rho
+\operatorname{Re}\mu}\left\Vert ^{o}m_{\bar{P}}(n))\right\Vert ^{s}%
\]%
\[
\leq C_{2}C_{3}a_{\bar{P}}(n)^{-\rho+\operatorname{Re}\mu}\left\Vert
n\right\Vert ^{sr}.
\]
by Lemma \ref{simpleL} and the fact that there exist $C$ and $c$ such that
$\left\Vert \sigma({}m))\right\Vert _{\chi^{\ast}.\sigma}\leq C\left\Vert
m\right\Vert ^{c}$. Harish-Chandra's Lemma (see Lemma \ref{HCLemma}) implies
that
\[
1+\rho(\log a_{\bar{P}}(n))\geq C_{4}(1+\log\left\Vert n\right\Vert ).
\]
This implies that there exists $C_{5},r$ such that
\[
a_{\bar{P}}(n)^{\rho}\geq C_{5}\left\Vert n\right\Vert ^{p}.
\]
Thus, if $\operatorname{Re}(\mu,\alpha)<-t(\rho_{P},\alpha)$ then applying the
first part of Lemma \ref{HCinequality}
\[
a_{\bar{P}}(n)^{-\rho+\operatorname{Re}\mu}\leq a_{\bar{P}}(n)^{-(1+t)\rho
}\leq C_{5}^{-(1+t)}\left\Vert n\right\Vert ^{-p(1+t)}.
\]
Hence
\[
\left\Vert u_{\mu}(n)\right\Vert _{\chi^{\ast},\sigma}\leq C_{6}\left\Vert
n\right\Vert ^{-p(1+t)}\left\Vert n\right\Vert ^{sr}.
\]
Since there exists $q$ such that
\[
\int_{N_{P}}\left\Vert n\right\Vert ^{-q}dn<\infty\text{.}%
\]
The lemma follows.
\end{proof}

In light of this lemma we see that $j_{\sigma,\mu}(u)$ is holomorphic in $\mu$
with values in $H_{\chi^{\ast},\sigma}$ on the set
\[
\{\mu\in\mathfrak{a}_{\mathbb{C}}^{\ast}|\operatorname{Re}(\mu,\alpha
)<-c_{\sigma}(\rho_{P},\alpha)\mathrm{\ for\ }\alpha\in\Phi(P,A_{P})\}.
\]

Define $(H_{\sigma}^{\infty})_{\chi^{\ast}}$ to be the closure of the space
\[
\{(\sigma(n)-\chi(n))u|u\in H_{\sigma}^{\infty},n\in N_{o}\cap M_{P}\}.
\]
in $H_{\sigma}^{\infty}$. Also set $\left(  H_{\chi^{\ast},\sigma}\right)
_{\chi^{\ast}}$ equal to the closure of
\[
\{(\sigma(n)-\chi(n))u|u\in H_{\chi^{\ast},\sigma},n\in N_{o}\cap M_{P}\}.
\]
in $H_{1}$. Then $(H_{\sigma}^{\infty})_{\chi^{\ast}}\subset\left(
H_{\chi^{\ast},\sigma}\right)  _{\chi^{\ast}}$ so we have a natural continuous
map $\iota:H_{\sigma}^{\infty}/(H_{\sigma}^{\infty})_{\chi^{\ast}}\rightarrow
H_{\chi^{\ast},\sigma}/\left(  H_{\chi^{\ast},\sigma}\right)  _{\chi^{\ast}}$.

\begin{lemma}
The space $H_{\sigma}^{\infty}/(H_{\sigma}^{\infty})_{\chi^{\ast}}$ is finite
dimensional and $\iota$ is bijective.
\end{lemma}

\begin{proof}
We note that $H_{\sigma}^{\infty}/(H_{\sigma}^{\infty})_{\chi^{\ast}}$ is a
Fr\'{e}chet space and $H_{1}/\left(  H_{1}\right)  _{\chi^{\ast}}$ is a
Hilbert space so in both cases the continuous duals separate the points. By
the definition of $H_{\chi^{\ast},\sigma}$ the inclusion of $H_{\sigma
}^{\infty}$ into $H_{\chi^{\ast},\sigma}$ is continuous and its image is
dense. Also, if $\lambda\in\left(  H_{\chi^{\ast},\sigma}/\left(
H_{\chi^{\ast},\sigma}\right)  _{\chi^{\ast}}\right)  ^{\prime}$ then
$\lambda$ pulls back to $\check{\lambda}$ on $H_{\chi^{\ast},\sigma}$ as an
element of $Wh_{\chi^{\ast}}(H_{\chi^{\ast},\sigma})$. By the definition of
$H_{\chi^{\ast},\sigma}$, the element $\check{\lambda}_{|H_{\sigma}^{\infty}}$
\ is in $Wh_{\chi^{\ast}}(H_{\sigma}^{\infty})$. The result now follows from
the definition of $H_{\chi^{\ast},\sigma}$ and the finite dimensionality of
the space of Whittaker vectors.
\end{proof}

We will identify $\left(  H_{\sigma}^{\infty}/(H_{\sigma}^{\infty}%
)_{\chi^{\ast}}\right)  ^{\prime}$ with $Wh_{\chi^{\ast}}(H_{\sigma}^{\infty
})$ and thus write $\lambda$ \ for $\check{\lambda}$.

Let $p$ be the canonical projection of $H_{\chi^{\ast},\sigma}$ onto
$H_{\chi^{\ast},\sigma}/\left(  H_{\chi^{\ast},\sigma}\right)  _{\chi^{\ast}}$
and let $\tau$ be the inverse map to $\iota$. We define for $(\mu
,\alpha)<-c_{\sigma}(\rho,\alpha),\alpha\in\Phi(P,A_{P})$,
\[
\tilde{j}_{\sigma,\mu}(u)=\tau(p(j_{\sigma,\mu}(u)).
\]
Then for each $u\in H_{\sigma}^{\infty}$ this defines a holomorphic map in
$\mu$ of
\[
\{\mu\in\left(  \mathfrak{a}_{P}\right)  _{\mathbb{C}}^{\ast}|(\mu
,\alpha)<-c_{\sigma}(\rho,\alpha),\alpha\in\Phi(P,A_{P})\}
\]
to $H_{\sigma}^{\infty}/(H_{\sigma}^{\infty})_{\chi^{\ast}}$.

\begin{proposition}
\label{holomorphy}If $u\in H_{\sigma}^{\infty}$ then the map $\mu\mapsto
\tilde{j}_{\sigma,\mu}(u)$ initially defined on
\[
\{\mu\in\left(  \mathfrak{a}_{P}\right)  _{\mathbb{C}}^{\ast}%
|\operatorname{Re}(\mu,\alpha)<-c_{\sigma}(\rho,\alpha),\alpha\in\Phi
(P,A_{P})\}
\]
extends to a holomorphic map of $\left(  \mathfrak{a}_{P}\right)
_{\mathbb{C}}^{\ast}$ to $H_{\sigma}^{\infty}/(H_{\sigma}^{\infty}%
)_{\chi^{\ast}}$. Furthermore, the map
\[
\left(  \mathfrak{a}_{P}\right)  _{\mathbb{C}}^{\ast}\times H_{\sigma}%
^{\infty}\rightarrow H_{\sigma}^{\infty}/(H_{\sigma}^{\infty})_{\chi^{\ast}}%
\]
given by
\[
\mu,u\mapsto\tilde{j}_{\sigma,\mu}(u)
\]
is continuos and holomorphic in $\mu$.
\end{proposition}

\begin{proof}
If $\mu\in\left(  \mathfrak{a}_{P}\right)  _{\mathbb{C}}^{\ast}$ ,
$\operatorname{Re}(\mu,\alpha)<-c_{\sigma}(\rho,\alpha),\alpha\in\Phi
(P,A_{P})$, and if $\lambda\in Wh_{\chi}(H_{1})$ then $\lambda(j_{\sigma,\mu
}(u))=J(P,\sigma,\mu)(\lambda)(u).$ The observation that $\lambda_{|\left(
H_{1}\right)  _{\chi^{\ast}}}=0$ combined with the above lemmas now imply that
$\lambda(\tilde{j}_{\sigma,\mu}(u))=J(P,\sigma,\mu)(\lambda)(u)$. Let
$w_{1},...,w_{r}$ be a basis of $H_{\sigma}^{\infty}/(H_{\sigma}^{\infty
})_{\chi^{\ast}}$ and let $\lambda_{1},...,\lambda_{r}$ be the dual basis in
$Wh_{\chi^{\ast}}(H_{\sigma}^{\infty})$ then
\[
\tilde{j}_{\sigma,\mu}(u)=\sum_{i=1}^{r}J(P,\sigma,\mu)(\lambda_{i})(u)w_{i}.
\]
This formula implements the holomorphic continuation and proves the indicted
properties of it.
\end{proof}

\begin{theorem}
\label{key-Theorem}Let $\alpha\in\mathcal{S(}\mathfrak{a}_{P}^{\ast})$,
$\beta(\nu)=\mu(\sigma,\nu)\alpha(\nu)$ (recall that $\mu(\sigma,\nu)$ is
the\ Harish-Chandra Plancherel density) and $v,w\in\left(  I_{\sigma}^{\infty
}\right)  _{K}$ then
\[
\int_{N_{o}}\chi(n_{o})^{-1}\int_{\mathfrak{a}_{P}^{\ast}}\left\langle
\pi_{i\nu}(n_{o}g)v,w\right\rangle \beta(\nu)d\nu dn_{o}%
\]%
\[
=\int_{\mathfrak{a}_{P}^{\ast}}J(P,\sigma,i\nu)(\lambda_{\tilde{j}%
_{\sigma,i\nu}(w)})(\pi_{i\nu}(g)v))\beta(\nu)d\nu.
\]

\end{theorem}

\begin{proof}
Fix $g\in G$. We are computing
\[
\int_{N_{o}}\chi(n_{o})^{-1}\int_{\mathfrak{a}_{P}^{\ast}}\int_{N_{P}%
}\left\langle v_{i\nu}(nn_{o}g),w_{i\nu}(n)\right\rangle dn\beta(\nu)d\nu
dn_{o}.
\]
We will do the calculation indirectly. Corollary \ref{polyest} and Corollary
\ref{N-extimate} in the Appendix imply that there exist $d$ and $C_{g,v,w}$
such that for all $z\in\mathbb{C}$ $\ $with $\operatorname{Re}z\geq0$ if $X\in
Z$ ($Z$ as in Corollary \ref{polyest}) then
\[
\left\vert \int_{\lbrack N_{o},N_{o}]}\int_{\mathfrak{a}_{P}^{\ast}}%
\int_{N_{P}}\left\langle v_{i\nu-z\rho_{P}}(nn_{o}\exp Xg),w_{i\nu-\bar{z}%
\rho_{P}}(n)\right\rangle dn\beta(\nu)d\nu dn_{o}\right\vert
\]%
\[
\leq\left\vert \int_{\mathfrak{a}_{P}^{\ast}}\beta(\nu)d\nu\right\vert
C_{g,v,w}^{1+\operatorname{Re}z}(1+\left\Vert X\right\Vert )^{d}.
\]
We fix $v,w,g$ and $\alpha$ and choose $C>0$ such that $C_{g,v,w}%
^{1+s}\left\vert \int_{\mathfrak{a}_{P}^{\ast}}\beta(\nu)d\nu\right\vert \leq
C^{1+s}$. Set for $\operatorname{Re}z\geq0$ and $X\in Z$
\[
\phi_{z}(X)=\int_{[N_{o},N_{o}]}\int_{\mathfrak{a}_{P}^{\ast}}\int_{N_{P}%
}\left\langle v_{i\nu-z\rho_{P}}(nn_{o}\exp Xg),w_{i\nu-\bar{z}\rho_{P}%
}(n)\right\rangle dn\beta(\nu)d\nu dn_{o}%
\]
Here $\phi_{z}(X)$ is holomorphic in $z$ for $\operatorname{Re}z>0$ and
continuous for $\operatorname{Re}z\geq0$. \ In the course of the argument we
will use the following assertion

I) There exists a constant $r_{\sigma}$ such that if $\operatorname{Re}%
z>r_{\sigma} $ then%
\[
\int_{N_{o}}\int_{N_{P}}\left\vert \left\langle v_{i\nu-z\rho_{P}}%
(nn_{o}g),w_{i\nu-\bar{z}\rho_{P}}(n)\right\rangle \right\vert dn<\infty.
\]
Thus Fubini's theorem implies that $\phi_{z}\in L^{1}(Z)$ for
$\operatorname{Re}z>r_{\sigma}.$

We prove this starting with a standard argument. Since the integrand is
non-negative the convergence can be tested in either order so we look at
\[
\int_{N_{P}}\int_{N_{o}}\left\vert \left\langle v_{i\nu-z\rho_{P}}%
(nn_{o}g),w_{i\nu-\bar{z}\rho_{P}}(n)\right\rangle \right\vert dn_{o}dn
\]%
\[
=\int_{N_{P}}\int_{N_{o}}\left\vert \left\langle v_{i\nu-z\rho_{P}}%
(n_{o}g),w_{i\nu-\bar{z}\rho_{P}}(n)\right\rangle \right\vert dn_{o}dn.
\]
Writing $n_{o}=n_{M}n_{1}$ with $n_{M}\in M_{P}\cap N_{o}$ and $n_{1}\in
N_{P}$ we can normalize the measures such that the last integral is equal to%
\[
\int_{N_{P}}\int_{N_{P}}\int_{N_{M}}\left\vert \left\langle v_{i\nu-z\rho_{P}%
}(n_{M}n_{1}g),w_{i\nu-\bar{z}\rho_{P}}(n)\right\rangle \right\vert
dn_{M}dn_{1}dn
\]%
\[
=\int_{N_{P}}\int_{N_{P}}\int_{N_{M}}\left\vert \left\langle \sigma
(n_{M})v_{i\nu-z\rho_{P}}(n_{1}g),w_{i\nu-\bar{z}\rho_{P}}(n)\right\rangle
\right\vert dn_{M}dn_{1}dn.
\]
Theorem 15.2.4 in \cite{RRGII} implies that there exist with $q_{1}$ and
$q_{2}$ continuous seminorms on $H_{\sigma}^{\infty}$ \ such that
\[
\overset{}{(\ast)}\int_{N_{M}}\left\vert \left\langle \sigma(n_{M}%
)x,y\right\rangle \right\vert dn_{M}\leq q_{1}(x)q_{2}(y).
\]
Indeed, Theorem 15.2.4 in \cite{RRGII} asserts that if $\Lambda$ is defined as
in 4.3.5 in \cite{RRGI} for $\left(  H_{\sigma}\right)  _{K}$ then there exist
continuous semi-norms $\phi_{1},\phi_{2}$ on $H_{\sigma}^{\infty}$ and $d$
such that if $x,y\in H_{\sigma}^{\infty}$ and $a\in{}^{o}M_{P}\cap A_{o}^{+}$
(here $\mathfrak{a}_{o}^{+}$ is as in Section 2 and $A_{o}^{+}=\exp
\mathfrak{a}_{o}^{+}$) then
\[
\left\vert \left\langle \sigma(a)x,y\right\rangle \right\vert \leq
a^{-\Lambda}(1+\log\left\Vert a\right\Vert )^{d}\phi_{1}(x)\phi_{2}(x).
\]
Noting that $\sigma$ is square integrable Theorem 5.5.4 \cite{RRGI} which says
that a square integrable representation of a real reductive group with compact
center is rapidly decreasing as defined 5.1.1 \cite{RRGI} and this implies
that there exists $\varepsilon>0$ such that if $h\in{}^{o}\mathfrak{m\cap
}\mathfrak{a}_{o}^{+}$ then%
\[
\Lambda(h)\geq(1+\varepsilon)\rho_{o}(h).
\]
The upshot of this is that if $a\in{}^{o}M_{P}\cap A_{o}^{+}$ then
\[
\left\vert \left\langle \sigma(a)x,y\right\rangle \right\vert \leq
a^{-(1+\varepsilon)\rho}(1+\log\left\Vert a\right\Vert )^{d}\phi_{1}%
(x)\phi_{2}(x).
\]
Now if $m\in$ $^{o}M_{P}$ then $m=k_{1}ak_{2}$ with $k_{i}\in M_{P}\cap K$ and
$a\in\overline{{}^{o}M_{P}\cap A_{o}^{+}}$ thus%
\[
\left\vert \left\langle \sigma(m)x,y\right\rangle \right\vert =\left\vert
\left\langle \sigma(a)\sigma(k_{2})x,\sigma(k_{1}^{-1})y\right\rangle
\right\vert \leq a^{-(1+\varepsilon)\rho}(1+\log\left\Vert a\right\Vert
)^{d}\phi_{1}(\sigma(k_{1}^{-1})x)\phi_{2}(\sigma(k_{2})x).
\]
This implies that if $q_{i}(x)=\mathrm{\sup}\{\phi_{i}(\sigma(k)x)|k\in
M_{P}\cap K$ we have
\[
\left\vert \left\langle \sigma(m)x,y\right\rangle \right\vert \leq\left\Vert
m\right\Vert ^{-(1+\varepsilon)}(1+\log\left\Vert m\right\Vert )^{d}%
q_{1}(x)q_{2}(x).
\]
Since%
\[
\int_{N_{M}}\left\Vert m\right\Vert ^{-(1+\varepsilon)}(1+\log\left\Vert
m\right\Vert )^{d}dm<\infty
\]
this implies $(\ast)$ above. Thus the integral that we are studying satisfies%
\[
\leq\int_{N_{P}}\int_{N_{P}}a_{\bar{P}}(n_{1}g)^{-(1+\operatorname{Re}%
z)\rho_{P}}a_{\bar{P}}(n)^{-(1+\operatorname{Re}z)\rho_{P}}\times
\]%
\[
q_{1}(\sigma(m_{\bar{P}}(n_{1}g))v(k(n_{1}g)))q_{2}(\sigma(m_{\bar{P}%
}(n))w(k(n)))dn_{1}dn
\]%
\[
\leq\int_{N_{P}}\int_{N_{P}}a_{\bar{P}}(n_{1}g)^{-(1+\operatorname{Re}%
z)\rho_{P}}a_{\bar{P}}(n)^{-(1+\operatorname{Re}z)\rho_{P}}\times
\]%
\[
\left\Vert m_{\bar{P}}(n_{1}g)\right\Vert ^{s}\left\Vert m_{\bar{P}%
}(n))\right\Vert ^{t}\max_{k_{1},k_{2}\in K}q_{3}(v(k_{1}))q_{4}%
(w(k_{2}))dn_{1}dn.
\]
Here we have used the fact that since $H_{\sigma}^{\infty}$ is of moderate
growth (see \cite{RRGII} 11.5.1 p.84) there exist $q_{3},q_{4}$ continuous
seminorms on $H_{\sigma}^{\infty}$ and $s,t$ such that%
\[
q_{1}(\sigma(m)x)\leq\left\Vert m\right\Vert ^{s}q_{3}(x),q_{2}(\sigma
(m)x)\leq\left\Vert m\right\Vert ^{t}q_{4}(x).
\]
Applying Proposition \ref{key-estimate} the assertion in I) now follows.

We will now begin the proof of the theorem.

If $\psi\in\mathcal{S}(Z)$ then%
\[
\left\vert \phi_{z}(X)\psi(X)\right\vert \leq C^{1+\operatorname{Re}%
z}(1+\left\Vert X\right\Vert )^{d}\left\vert \psi(X)\right\vert .
\]
with $C$ (given above) independent of $\operatorname{Re}z\geq0$ and $\psi$
thus in particular $\phi_{z}\psi\in L^{1}(Z)$ if $\operatorname{Re}z\geq0$.
This allows us to define the tempered distributions
\[
T_{z}(\psi)=\int_{Z}\phi_{z}(X)\psi(X)dX.
\]
for $\operatorname{Re}z\geq0$. Dominated convergence implies that $T_{z}$ is
weakly continuous for $\operatorname{Re}z\geq0$ and weakly holomorphic for
$\operatorname{Re}z>0$. In particular,
\[
\lim_{\varepsilon\rightarrow0+}T_{\varepsilon}=T_{0}%
\]
weakly in $\mathcal{S}(Z)$. Recalling that if $\mathcal{F}$ denotes the
Fourier transform on $\mathcal{S}(Z)$, which we will write as
\[
\mathcal{F}(\psi)(Y)=\int_{Z}e^{iB(\theta Y,X)}\psi(X)dX),
\]
(here $dX$ is normalized so that $\mathcal{F}^{-1}(\psi)(X)=\mathcal{F}%
(\psi)(-X)$) and using the usual formula for the Fourier transform of a
tempered distribution as $\mathcal{F}(T)=T\circ\mathcal{F}$. The continuity of
the Fourier transform implies that
\[
\lim_{\varepsilon\rightarrow0+}\mathcal{F}(T_{\varepsilon})=\mathcal{F}%
(T_{0})
\]
We also note that Fubini's theorem combined with I) above (in this proof) and
the fact that $\mathcal{C}(G)_{|N_{o}}\subset L^{1}(N_{o})$ implies that if
$\phi_{z}\in L^{1}(Z)$ for $\operatorname{Re}z=0$ or $\operatorname{Re}%
z>r_{\sigma}$ then
\[
\mathcal{F}(T_{z})(\psi)=\int_{Z}\mathcal{F}(\phi_{z})(X)\psi(X)dX\text{.}%
\]
and $\mathcal{F}(\phi_{z})$ is a continuous bounded function on $Z$ in this set.

Define for $Y\in Z$ and $X\in\mathfrak{n}_{o}$
\[
\chi_{Y}(\exp(X))=e^{-iB(\theta Y,X)}.
\]
If $\operatorname{Re}z\geq0$, $Y\in Z$ then consider,%
\[
h_{z}(Y)=\int_{N_{o}}\chi_{Y}(n_{o})^{-1}\int_{\mathfrak{a}_{P}^{\ast}}%
\int_{N_{P}}\left\langle v_{i\nu-z\rho_{P}}(nn_{o}g),w_{i\nu-\bar{z}\rho_{P}%
}(n)\right\rangle dn\beta(\nu)d\nu dn_{o}%
\]
then $h_{0}$ is defined and continuous for $z=0$ since since $\mathcal{C}%
(G)_{|N_{o}}\in N_{o}$ also if $\operatorname{Re}z>r_{\sigma}$ then I) above
implies that $h_{z}$ is the inverse Fourier transform of $\phi_{z}$ so is
continuous on $Z$ and holomorphic for $\operatorname{Re}z>r_{\sigma}$ and
$h_{z}(Y)$ holomorphic in $z$ for $\operatorname{Re}z>r_{\sigma}$ also if
$z=0$ or $\operatorname{Re}z>r_{\sigma}$ then
\[
\mathcal{F}(T_{z})(\psi)=\int_{Z}h_{z}(Y)\psi(Y)dY.
\]
Let $c_{\sigma}$ be as in Proposition \ref{key-estimate}. Let $U$ be the set
of all $Y\in Z$ such that $\chi_{Y}$ is a generic character, the $U$ is open
and dense in $Z$. We fix $Y\in U$ and write $\chi=\chi_{Y}$ and will now
compute
\[
h_{z}(Y)=\int_{N_{o}}\chi(n_{o})^{-1}\int_{\mathfrak{a}_{P}^{\ast}}\int
_{N_{P}}\left\langle v_{i\nu-z\rho_{P}}(nn_{o}g),w_{i\nu-z\rho_{P}%
}(n)\right\rangle dn\beta(\nu)d\nu dn_{o}%
\]
under the assumption that $\operatorname{Re}z>\max\{r_{\sigma},c_{\sigma}\}$
where this total integral is absolutely convergent. We may thus integrate in
any order. So, fix $z$ with $\operatorname{Re}z>\max\{r_{\sigma},c_{\sigma}\}$
and $Y\in U$. Set $\chi=\chi_{Y}$. We will use the notation $N_{M}=N_{o}\cap
M_{P}$ and note that $N_{P}N_{M}=N_{P_{o}}$ consider
\[
h_{z}(Y)=\int_{N_{P}\times N_{M}}\chi(n_{P}n_{M})^{-1}\times
\]%
\[
\int_{\mathfrak{a}_{P}^{\ast}}\int_{N_{P}}\left\langle \sigma(n_{M}%
)v_{i\nu-z\rho_{P}}(n_{M}^{-1}nn_{P}n_{M}g),w_{i\nu-\bar{z}\rho_{P}%
}(n)\right\rangle dn\beta(\nu)d\nu dn_{P}dn_{M}.
\]
We first integrate over $N_{P}$ and use the substitution $n_{P}\rightarrow
n^{-1}n_{P}$ to find that
\[
h_{z}(Y)=\int_{N_{P}\times N_{M}}\chi(n_{P}n_{M})^{-1}\times
\]%
\[
\int_{\mathfrak{a}_{P}^{\ast}}\int_{N_{P}}\chi(n)\left\langle \sigma
(n_{M})v_{i\nu-z\rho_{P}}(n_{M}^{-1}n_{P}n_{M}g),w_{i\nu-\bar{z}\rho_{P}%
}(n)\right\rangle dn\beta(\nu)d\nu dn_{P}dn_{M},
\]%
\[
=\int_{N_{P}\times N_{M}}\chi(n_{P}n_{M})^{-1}\times
\]%
\[
\int_{\mathfrak{a}_{P}^{\ast}}\int_{N_{P}}\chi(n)\left\langle \sigma
(n_{M})v_{i\nu-z\rho_{P}}(n_{P}g),w_{i\nu-\bar{z}\rho_{P}}(n)\right\rangle
dn\beta(\nu)d\nu dn_{P}dn_{M}%
\]%
\[
=\int_{\mathfrak{a}_{P}^{\ast}}\lambda_{j_{\sigma,i\nu-\bar{z}\rho}%
(w)}(j_{\sigma,i\nu-z\rho}(\pi_{i\nu}(g)v))\beta(\nu)d\nu
\]%
\[
=\int_{\mathfrak{a}_{P}^{\ast}}\int_{\mathfrak{a}_{P}^{\ast}}\overline
{\lambda_{j_{\sigma,i\nu-z\rho}(v)}(j_{\sigma,i\nu-z\rho}(\pi_{i\nu-\bar
{z}\rho}(g)w))}\beta(\nu)d\nu
\]
This implies that the value only depends on the image of $j_{\sigma,i\nu
-z\rho}(v)$ (resp. $j_{\sigma,i\nu-z\rho}(w)$) in the finite dimensional
space
\[
H_{\sigma}^{\infty}/\left(  H_{\sigma}^{\infty}\right)  _{\chi_{|N_{o}\cap
M_{P}}}=H_{\chi_{|N_{o}\cap M_{P}},\sigma}/\left(  H_{\chi_{|N_{o}\cap M_{P}%
},\sigma}\right)  _{\chi_{|N_{o}\cap M_{P}}}%
\]
If $\operatorname{Re}z>\max\{r_{\sigma},c_{\sigma}\},$ using the notation
$J_{\chi_{Y}}(P,\sigma,\nu)$ and $\tilde{j}_{\sigma,\nu}^{Y}$ to take into
account the dependence on $Y$ and noting that both are holomorphic in $\nu$
and continuous in $Y$ (see Proposition \ref{Continuity} in Appendix B) we
have
\[
h_{z}(Y)=\int_{\mathfrak{a}_{P}^{\ast}}J_{\chi_{Y}}(P,\sigma,i\nu
-z\rho)(\lambda_{\tilde{j}_{\sigma,i\nu-z\rho}^{Y}(w)})(\pi_{i\nu}%
(g)v))\beta(\nu)d\nu
\]
if $Y\in U$. \ This gives a holomorphic continuation of $h_{z}$ to
$\operatorname{Re}z>0$. Let $\psi$ have compact support in $U$ then we have
the family of distributions on $U$ (see Theorem\ref{J-estimate})
\[
S_{z}(\psi)=\int_{U}\int_{\mathfrak{a}_{P}^{\ast}}J_{\chi_{Y}}(P,\sigma
,i\nu-z\rho)(\lambda_{\tilde{j}_{\sigma,i\nu-z\rho}^{Y}(w)})(\pi_{i\nu
}(g)v))\beta(\nu)d\nu\psi(Y)dY
\]
continuous weakly for $\operatorname{Re}z\geq0$ and weakly holomorphic on
$\operatorname{Re}z>0$. Also since the original distributions, $T_{z}$, have
the same holomorphy and continuity propertied and we have proved that
$\mathcal{F}\left(  T_{z}\right)  _{|U}=S_{z|U}$ for $\operatorname{Re}%
z>\max\{r_{\sigma},c_{\sigma}\}$ The holomorphy for $\operatorname{Re}z>0$ and
continuity for $\operatorname{Re}z\geq0$ imply that the equation is true for
$\operatorname{Re}z\geq0$. If $\psi\in C_{c}^{\infty}(U)$
\[
\int_{U}h_{0}(Y)\psi(Y)dY=\mathcal{F}(T_{0})(\psi)=\lim_{\varepsilon
->0+}\mathcal{F}(T_{\varepsilon})(\psi)=S_{0}(\psi).
\]
Thus, if $Y\in U$ then%
\[
h_{0}(Y)=\int_{\mathfrak{a}_{P}^{\ast}}J_{\chi_{Y}}(P,\sigma,i\nu
)(\lambda_{\tilde{j}_{\sigma,i\nu}^{Y}(w)})(\pi_{i\nu}(g)v))\beta(\nu)d\nu.
\]
Which is the statement of the theorem.
\end{proof}

In the proof of Proposition \ref{holomorphy} Let $\lambda_{1},...,\lambda_{r}
$ be a basis of $Wh_{\chi_{|M_{P}\cap N_{o}}}(H_{\sigma}^{\infty})$ and let
$w_{1},...,w_{r}\in H_{\sigma}^{\infty} $ project to $H_{\sigma}^{\infty
}/\left(  H_{\sigma}^{\infty}\right)  _{\chi_{|N_{o}\cap M_{P}}}$ and satisfy
$\lambda_{i}(w_{j})=\delta_{ij},i,j=1,...,r$. If $w\in I_{\sigma}^{\infty}$
then
\[
\tilde{j}_{\sigma,i\nu}(w)\equiv J(P,\sigma,i\nu)(\lambda_{i})(w)w_{i}%
\quad\operatorname{mod}\left(  H_{\sigma}^{\infty}\right)  _{\chi_{|N_{o}\cap
M_{P}}}.
\]
Hence
\[
\lambda_{\tilde{j}_{\sigma,i\nu}(w)}=\sum\overline{J_{\chi}(P,\sigma
,i\nu)(\lambda_{i})(w)}\lambda_{w_{_{i}}}.
\]
We also note that if $\lambda_{1},...,\lambda_{r}$ is an orthonormal basis
relative to the inner product in Corollary \ref{whitproduct} then
$\lambda_{w_{i}}=\lambda_{i}$. We have proved

\begin{corollary}
\label{basic-Formula} If $\alpha\in\mathcal{S(}\mathfrak{a}_{P}^{\ast})$ and
$v,w\in\left(  I_{\sigma}^{\infty}\right)  _{K}$ setting
\[
\psi(g)=\psi(\alpha,v,w)(g)=\int_{\mathfrak{a}_{P}^{\ast}}\left\langle
\pi_{i\nu}(g)v,w\right\rangle \alpha(\nu)\mu(\sigma,i\nu)d\nu
\]
then if $\lambda_{1}^{\sigma},...,\lambda_{r_{\sigma}}^{\sigma}$ is an
orthonormal basis of $Wh_{\chi_{|M_{P}\cap N_{o}}}(H_{\sigma}^{\infty})$ then
(recalling that
\[
\psi_{\chi}(g)=\int_{N_{o}}\chi(n)^{-1}\psi(ng)dn)
\]%
\[
\psi_{\chi}(g)=\sum_{i=1}^{d_{\sigma}}\int_{\mathfrak{a}_{P}^{\ast}}%
\overline{J_{\chi}(P,\sigma,i\nu)(\lambda_{i}^{\sigma})(w)}J_{\chi}%
(P,\sigma,i\nu)(\lambda_{i}^{\sigma})(\pi_{\bar{P},\sigma,i\nu}(g)v)\alpha
(\nu)\mu(\sigma,i\nu)d\nu.
\]

\end{corollary}

\section{ \label{Firstform}The Whittaker Plancherel Theorem first form}

We assume that $G$ has compact center and that $\chi$ is generic.

If $F\subset\hat{K}$ is a finite set then define $\mathcal{C}(G)_{F}$ to be
the set of elements of $\mathcal{C}(G)$ such that
\[
\sum_{\gamma\in F}d(\gamma)\int_{K}f(gk)\chi_{\gamma}(k^{-1})dk=f(g),g\in G
\]
and let $_{F}\mathcal{C}(G)$ be those elements of $\mathcal{C}(G)$ such that
\[
\sum_{\gamma\in F}d(\gamma)\int_{K}f(kg)\chi_{\gamma}(k)dk=f(g),g\in G
\]
Let $P$ be a standard cuspidal parabolic subgroup and let $\sigma
\in\mathcal{E}_{2}\left(  ^{o}M_{P}\right)  $. Let $E_{\gamma}$ be the
orthogonal projection onto $I_{\sigma}(\gamma)$ (the $\gamma$--isotypic
component). We set $\pi_{\nu}=\pi_{\bar{P},\sigma,\nu}$. If $F\subset\hat{K}$
\ then we write $E_{F}=\sum_{\gamma\in F}E_{\gamma}$. If $f\in\mathcal{C}%
(G)_{F}$ then if $f\in{}_{H}\mathcal{C}(G)\cap\mathcal{C}(G)_{F}$ then
\[
\pi_{\nu}(f)=E_{H}\pi_{\nu}(f)E_{F}%
\]
Thus if $\left\{  v_{i}^{\sigma}\right\}  $ is an orthonormal basis of
$I_{\sigma}$ such that $v_{i}^{\sigma}\in I_{\sigma}(\gamma_{i})$ and if
$S_{\sigma}(F)=\{i|\gamma_{i}\in F\}$ then $\left\{  v_{i}^{\sigma}\right\}
_{i\in S_{\sigma}}$ is an orthonormal basis of $I_{\sigma}(F)=\sum_{\gamma\in
F}I_{\sigma}(\gamma)$. Before we state our key result we need a bit more
notation. If we need to indicate the dependence of $J(P,\sigma,\nu)$ on $\chi$
we will write $J_{\chi}(P,\sigma,\nu)$. If $f\in\mathcal{C}(G)$ then set
\[
T_{P,\sigma}(f)(g)=\int_{\mathfrak{a}_{P}^{\ast}}\mathrm{tr}(\pi_{\bar
{P},\sigma,i\nu}(L_{g^{-1}}f))\mu(\sigma,i\nu)d\nu.
\]

\begin{theorem}
\label{whittakerI}Let $P$ be a standard cuspidal parabolic subgroup and let
$\sigma\in\mathcal{E}_{2}({}^{o}M_{P})$. Let $\{\lambda_{1}^{\sigma
},...,\lambda_{d_{\sigma}}^{\sigma}\}$ be an orthonormal basis of
$Wh(H_{\sigma}^{\infty})$ relative to the inner product given in Corollary 37.
Let $F$ be a finite subset of $\hat{K}$, let $f\in\mathcal{C}(G)_{F}$ and let
$\{v_{i}^{\sigma}\}$ be as above an orthonormal basis of $\left(  I_{\sigma
}\right)  _{K}$ compatible with the isotypic decomposition then
\[
T_{P,\sigma}(f)_{\chi}(g)=\sum_{i=1}^{d_{\sigma}}\sum_{l\in S_{\sigma}(F)}%
\int_{\mathfrak{a}_{P}^{\ast}}J_{\chi^{-1}}(P,\sigma,i\nu)(\lambda_{i}%
^{\sigma})(\pi_{\bar{P},\sigma,i\nu}(f)v_{l})\times
\]%
\[
\overline{J_{\chi^{-1}}(P,\sigma,i\nu)(\lambda_{i}^{\sigma})(\pi_{\bar
{P},\sigma i\nu}(g)v_{l})}\mu(\sigma,i\nu)d\nu.
\]

\end{theorem}

\begin{proof}
Both sides of the equation are continuous in $f\in\mathcal{C}(G)_{F}.$ Since
the left $K$--finite elements of $\mathcal{C}(G)_{F}$ are dense in
$\mathcal{C}(G)_{F}$ it is enough to prove the equation for $f\in{}%
_{H}\mathcal{C}(G)\cap\mathcal{C}(G)_{F}$ any finite $H\subset\hat{K}$. By
definition
\[
T_{P,\sigma}(f)(g)=\sum_{l\in S_{\sigma}(F)}\int_{\mathfrak{a}_{P}^{\ast}%
}\left\langle \pi_{\bar{P},\sigma,i\nu}(g^{-1})\pi_{\bar{P},\sigma,i\nu
}(f)v_{l},v_{l}\right\rangle \mu(\sigma,i\nu)d\nu
\]
note that this is a finite sum and is equal to $\psi$ is as in Corollary
\ref{basic-Formula}. In the calculation below we will use the notation
$\alpha_{ij}(\nu)=\left\langle v_{j},\pi_{\bar{P},\sigma,i\nu}(f)v_{l}%
\right\rangle .$
\[
\sum_{l\in S_{\sigma}(F),j\in S_{\sigma}(H)}\int_{\mathfrak{a}_{P}^{\ast}%
}\left\langle \pi_{\bar{P},\sigma,i\nu}(g^{-1})v_{j},v_{l}\right\rangle
\left\langle \pi_{\bar{P},\sigma,i\nu}(f)v_{l},v_{j}\right\rangle \mu
(\sigma,i\nu)d\nu
\]%
\[
=\sum_{l\in S_{\sigma}(F),j\in S_{\sigma}(H)}\int_{\mathfrak{a}_{P}^{\ast}%
}\left\langle v_{j},\pi_{\bar{P},\sigma,i\nu}(g)v_{l}\right\rangle
\left\langle \pi_{\bar{P},\sigma,i\nu}(f)v_{l},v_{j}\right\rangle \mu
(\sigma,i\nu)d\nu
\]%
\[
=\sum_{l\in S_{\sigma}(F),j\in S_{\sigma}(H)}\overline{\psi(\alpha_{ij}%
,v_{l},v_{j})(g)}.
\]
Which implies that
\[
T_{P,\sigma}(f)_{\chi}(g)=\sum_{l\in S_{\sigma}(F),j\in S_{\sigma}%
(H)}\overline{\psi(\alpha_{ij},v_{l},v_{j})_{\chi^{-1}}(g)}%
\]%
\[
=\sum_{i=1}^{d_{\sigma}}\sum_{j\in S_{\sigma}(F)}\sum_{l\in S_{\sigma}(F)}%
\int_{\mathfrak{a}_{P}^{\ast}}J_{\chi^{-1}}(P,\sigma,i\nu)(\lambda_{i}%
^{\sigma})(v_{j})\times
\]%
\[
\overline{J_{\chi^{-1}}(P,\sigma,i\nu)(\lambda_{i}^{\sigma})(\pi_{i\nu
}(g)v_{l})}\left\langle \pi_{\bar{P},\sigma,i\nu}(f)v_{l},v_{j}\right\rangle
\mu(\sigma,i\nu)d\nu
\]%
\[
=\sum_{i=1}^{d_{\sigma}}\sum_{l\in S_{\sigma}(F)}\int_{\mathfrak{a}_{P}^{\ast
}}J_{\chi^{-1}}(P,\sigma,i\nu)(\lambda_{i}^{\sigma})(\pi_{\bar{P},\sigma,i\nu
}(f)v_{l})\overline{J_{\chi^{-1}}(P,\sigma,i\nu)(\lambda_{i}^{\sigma}%
)(\pi_{i\nu}(g)v_{l})}\mu(\sigma,i\nu)d\nu.
\]

\end{proof}

Let $P$ be a $P_{o}$ standard parabolic subgroup of $G$ with standard
Langlands decomposition $M_{P}A_{P}N_{P}$, $\sigma$ an irreducible square
integrable representation of $M_{p}$ and and $\left\{  \lambda_{1}^{\sigma
},...,\lambda_{r_{\sigma}}^{\sigma}\right\}  $ an orthonormal basis of
$Wh_{\chi|_{M_{p}\cap N_{o}}}$. If $f\in\mathcal{C}(G)$ is right $K$--finite
with isotopic components in $F$\ we define
\[
W_{P,\sigma}(f)(g)=\sum_{i=1}^{d_{\sigma}}\sum_{l\in S_{\sigma}(F)}%
\int_{\mathfrak{a}_{P}^{\ast}}J_{\chi^{-1}}(P,\sigma,i\nu)(\lambda_{i}%
^{\sigma})(\pi_{\bar{P},\sigma,i\nu}(f)v_{l})\times
\]%
\[
\overline{J_{\chi^{-1}}(P,\sigma,i\nu)(\lambda_{i}^{\sigma})(\pi_{i\nu
}(g)v_{l})}\mu(\sigma,i\nu)d\nu.
\]
The following is the distributional version of the Whittaker Plancherel Theorem

\begin{corollary}
\label{Whitt2}If $f$ is a right $K$--finite element of $\mathcal{C}(G)$ then
\[
f_{\chi}=\sum_{[P]\in\mathcal{P}}\sum_{[\sigma]\in\mathcal{E}_{2}({}^{o}%
M_{P})}d(\sigma)W_{P,\sigma}(f).
\]
Note that this is a finite sum since an $^{o}M_{P}\cap K$--type can appear in
only a finite number of elements of $\mathcal{E}_{2}({}^{o}M_{P})$.
\end{corollary}

\begin{proof}
Harish-Chandra's Plancherel Theorem (Theorem \ref{plancherel}) implies that in
the notation of the previous theorem
\[
f=\sum_{[P]\in\mathcal{P}}\sum_{[\sigma]\in\mathcal{E}_{2}({}^{o}M_{P}%
)}d(\sigma)T_{P,\sigma}(f).
\]
Thus
\[
f_{\chi}=\sum_{[P]\in\mathcal{P}}\sum_{[\sigma]\in\mathcal{E}_{2}({}^{o}%
M_{P})}d(\sigma)T_{P,\sigma}(f)_{\chi}=
\]
\[
\sum_{\lbrack P]\in\mathcal{P}}\sum_{[\sigma]\in\mathcal{E}_{2}({}^{o}M_{P}%
)}d(\sigma)W_{P,\sigma}(f).
\]
by the previous theorem.
\end{proof}

\section{\label{Final}Whittaker Plancherel Theorem}

In this section we will use Theorem \ref{Whitt2} to derive the spectral
decomposition of a right $K$--finite element of $\mathcal{C}(N_{o}\backslash
G;\chi)$ in the case when $G$ has compact center and $\chi$ is generic. The
key lemma in the proof is

\begin{lemma}
Assume $\chi$ is generic. Let $P$ be a standard cuspidal parabolic subgroup
and let $\sigma\in\omega\in\mathcal{E}_{2}({}^{o}M_{P})$. Let $\psi\in
C^{\infty}(N_{o}\backslash G;\chi)$ be such that $\left\vert \psi\right\vert
\in C_{c}(N_{o}\backslash G)$ and let $\varphi\in C_{c}^{\infty}(N_{o})$ is
such that
\[
\int_{N_{o}}\chi(n)^{-1}\varphi(n)dn=1.
\]
Set $f(nak)=\varphi(n)\psi(ak)$ for $n\in N_{o},a\in A_{o},k\in K$. If
$\lambda\in Wh_{\chi^{-1}|_{N_{o}\cap M_{P}}}(H_{\sigma}^{\infty})$ and $v\in
I_{\sigma}^{\infty}$ then
\[
J_{\chi^{-1}}(P,\sigma,i\nu)(\lambda)(\pi_{\bar{P},\sigma,i\nu}(f)v)=\int
_{N_{o}\backslash G}J_{\chi^{-1}}(P,\sigma,i\nu)(\lambda)(\pi_{\bar{P}%
,\sigma,i\nu}(g)v)\psi(g)dg
\]

\end{lemma}

\begin{proof}
Note that $\nu\longmapsto J_{\chi^{-1}}(P,\sigma,\nu)(\lambda)(\pi_{\bar
{P},\sigma,\nu}(f)v)$ is holomorphic on $\left(  \mathfrak{a}_{P}^{\ast
}\right)  _{\mathbb{C}}$. Thus if we prove that
\[
J_{\chi^{-1}}(P,\sigma,\nu)(\pi_{\bar{P},\sigma,\nu}(f)v)=\int_{N_{o}%
\backslash G}J_{\chi^{-1}}(P,\sigma,\nu)(\pi_{\bar{P},\sigma,\nu}%
(g)v)\psi(g)dg
\]
if $\operatorname{Re}(\nu,\alpha)<0$ for $\alpha\in\Phi(P,A_{P})$ then the
formula will be true for all $\nu$ since both sides of the equation are
holomorphic in $\nu$. We calculate (setting $w_{\nu}={}_{\bar{P}}w_{\nu}$ for
$w\in I_{\sigma}^{\infty})$
\[
J_{\chi^{-1}}(P,\sigma,\nu)(\pi_{\bar{P},\sigma,\nu}(f)v)=\int_{N_{P}}%
\chi(n)\lambda(\left(  \pi_{\bar{P},\sigma,\nu}(f)v\right)  _{\nu}(n))dn
\]%
\[
=\int_{N_{P}}\chi(n)\lambda(\int_{G}f(g)v_{\nu}(ng)dg)dn.
\]
Note that $N_{o}=N_{M}N_{P}$ with $N_{M}=N_{o}\cap M_{P}\ $and the map
$N_{M}\times N_{P}\rightarrow N_{o}$ given by multiplication is a
diffeomorphism. Theorem \ref{discretewhitt} says that there exists
$z\in\left(  H_{\sigma}\right)  _{K}$such that if $u\in H_{\sigma}^{\infty}$
then
\[
\lambda(u)=\lambda_{z}(u)=\int_{N_{M}}\left\langle \sigma(n_{M}%
)u,z\right\rangle \chi(n_{M})dn_{M}.
\]
Thus
\[
\int_{N_{P}}\chi(n)\lambda(\int_{G}f(g)v_{\nu}(ng))dg)dn=
\]%
\[
\int_{N_{P}}\chi(n)\int_{N_{M}}\chi(n_{M})\left\langle \sigma(n_{M})(\int
_{G}f(g)v_{\nu}(ng)dg),z\right\rangle dn_{M}dn
\]%
\[
=\int_{N_{P}}\chi(n)\int_{N_{M}}\chi(n_{M})\times
\]%
\[
\left\langle \sigma(n_{M})(\int_{N_{o}\times A_{o}\times K}f(n_{o}%
a_{o}k)v_{\nu}(nn_{o}a_{o}k)a_{o}^{-2\rho}dn_{o}da_{o}dk),z\right\rangle
dn_{M}dn
\]%
\[
=\int_{N_{P}}\int_{N_{M}}\chi(nn_{M})\times
\]%
\[
\left\langle \int_{N_{o}\times A_{o}\times K}f(n_{o}a_{o}k)v_{\nu}(n_{M}%
nn_{o}a_{o}k)a_{o}^{-2\rho}dn_{o}da_{o}dk),z\right\rangle dn_{M}dn.
\]
Note that under the condition on $\nu$ and $\psi$ this integral converges
absolutely so it can be calculated in any order. So we are calculating
\[
\int_{N_{o}}\chi(n_{1})\left\langle \int_{N_{o}\times A_{o}\times K}%
\varphi(n_{o})\psi(a_{o}k)v_{\nu}(n_{1}n_{o}a_{o}k),z\right\rangle
a_{o}^{-2\rho}dn_{o}da_{o}dk)dn_{1}%
\]%
\[
=\int_{N_{o}}\chi(n_{1}n_{o}^{-1})\left\langle \int_{N_{o}\times A_{o}\times
K}\varphi(n_{o})\psi(a_{o}k)v_{\nu}(n_{1}a_{o}k)),z\right\rangle a_{o}%
^{-2\rho}dn_{o}da_{o}dk)dn_{1}%
\]%
\[
=\int_{N_{o}}\chi(n_{o})^{-1}\varphi(n_{o})dn_{o}\times
\]%
\[
\int_{N_{o}\times A_{o}\times K}\chi(n_{1})\left\langle \psi(a_{o}k)v_{\nu
}(n_{1}a_{o}k),z\right\rangle a_{o}^{-2\rho}da_{o}dkdn_{1}%
\]%
\[
=\int_{N_{M}\times N_{P}}\chi(n_{P})\psi(a_{o}k)\left\langle \sigma
(n_{M})v_{\nu}(n_{P}a_{o}k)),z\right\rangle dn_{M}dn_{P}a^{-2\rho}da_{o}dk
\]%
\[
=\int_{A_{o}\times K}\int_{N_{P}}\chi(n_{P})\psi(a_{o}k)\lambda(v_{\nu}%
(n_{P}a_{o}k))dn_{P}a^{-2\rho}da_{o}dk=
\]%
\[
=\int_{A_{o}\times K}J_{\chi^{-1}}(P,\sigma,i\nu)(\lambda)(\pi_{\bar{P}%
,\sigma.\nu}(a_{o}k)v)\psi(a_{o}k)a_{o}^{-2\rho}da_{o}dk
\]%
\[
=\int_{N_{o}\backslash G}J_{\chi^{-1}}(P,\sigma,i\nu)(\lambda)(\pi_{\bar
{P},\sigma.\nu}(g)v)\psi(g)dg.
\]

\end{proof}

Let $P$ be a standard cuspidal parabolic subgroup and let $[\sigma
]\in\mathcal{E}_{2}({}^{o}M_{P})$. If $\psi\in\mathcal{C}(N_{o}\backslash
G;\chi)$ and $\lambda\in Wh_{\chi^{-1}|_{N_{o}\cap M_{P}}}(H_{\sigma}^{\infty
}),v\in I_{\sigma}^{\infty}$ then set
\[
\mathcal{W}_{P,\sigma}^{\chi}(\psi)(\lambda,v,\nu)=\int_{N_{o}\backslash
G}J_{\chi^{-1}}(P,\sigma,i\nu)(\lambda)(\pi_{\bar{P},\sigma.i\nu}%
(g)v)\psi(g)dg.
\]

Before we can state the Whittaker Plancherel Theorem we need more notation. If
$P$ is a standard Parabolic subgroup of $G$ and $\sigma\in\mathcal{E}_{2}%
({}^{o}M_{P})$ choose $\lambda_{1}^{\sigma},...,\lambda_{r_{\sigma}}^{\sigma}$
an orthonormal basis of $Wh_{\chi^{-1}|_{N_{o}\cap M_{P}}}(H_{\sigma}^{\infty
})$ and $\left\{  v_{i}^{\sigma}\right\}  $ an orthonormal basis of $\left(
I_{\sigma}^{\infty}\right)  _{K}$ that respects the $K$--isotypic decomposition.

\begin{theorem}
Assume that $\chi$ is generic. If $\psi$ is a right $K$--finite element of
$\mathcal{C}(N_{o}\backslash G;\chi)\ $then
\[
\psi(g)=\sum_{[P]\in\mathcal{P}}\sum_{[\sigma]\in\mathcal{E}_{2}({}^{o}M_{P}%
)}d(\sigma)\sum_{i=1}^{r_{\sigma}}\sum_{l}\int_{\mathfrak{a}_{P}^{\ast}%
}\mathcal{W}_{P,\sigma}^{\chi}(\psi)(\lambda_{i}^{\sigma},v_{l}^{\sigma}%
,\nu))
\]%
\[
\overline{J_{\chi^{-1}}(P,\sigma,i\nu)(\lambda_{i}^{\sigma})(\pi_{\bar
{P},\sigma,i\nu}(g)v_{l})}\mu(\sigma,i\nu)d\nu.
\]

\end{theorem}

\begin{proof}
First we assume that $\left\vert \psi\right\vert \in C_{c}(N_{o}\backslash G)$
and let $f$ be as in the previous lemma. Then, since, $f_{\chi}=\psi$ Theorem
\ref{Whitt2} implies that
\[
\psi=f_{\chi}=\sum_{[P]\in\mathcal{P}}\sum_{[\sigma]\in\mathcal{E}_{2}({}%
^{o}M_{P})}d(\sigma)W_{P,\sigma}(f).
\]
Now
\[
W_{P,\sigma}(f)(g)=\sum_{i=1}^{d_{\sigma}}\sum_{l}d(\sigma)\int_{\mathfrak{a}%
_{P}^{\ast}}J_{\chi^{-1}}(P,\sigma,i\nu)(\lambda_{i}^{\sigma})(\pi_{\bar
{P},\sigma,i\nu}(f)v_{l}^{\sigma})\times
\]%
\[
\overline{J_{\chi^{-1}}(P,\sigma,i\nu)(\lambda_{i}^{\sigma})(\pi_{\bar
{P},\sigma,i\nu}(g)v_{l}^{\sigma})}\mu(\sigma,i\nu)d\nu.
\]
But the formula is a finite sum depending only on the $K$--types that occur in
$\mathrm{Span}R_{K}\psi$ and the terms in the sum are continuous on
$\mathcal{C}(N_{o}\backslash G;\chi)_{K}$ so the formula is true on all of
$\mathcal{C}(N_{o}\backslash G;\chi)_{K}$. The theorem now follows from the
definition of $\mathcal{W}_{P,\sigma}^{\chi}(\psi)(\lambda,v,\nu)$.
\end{proof}

\begin{corollary}
\label{12-version}Assume the $\chi$ is generic. If $\psi_{1},\psi_{2}\in$
$\mathcal{C}(N_{o}\backslash G;\chi)$ are $K$--finite elements then%
\[
\int_{N_{o}\backslash G}\psi_{1}(g)\overline{\psi_{2}(g)}dg=
\]%
\[
\sum_{\lbrack P]\in\mathcal{P}}\sum_{[\sigma]\in\mathcal{E}_{2}({}^{o}M_{P}%
)}d(\sigma)\sum_{i=1}^{r_{\sigma}}\sum_{l}\int_{\mathfrak{a}_{P}^{\ast}%
}\mathcal{W}_{P,\sigma}^{\chi}(\psi_{1})(\lambda_{i}^{\sigma},v_{l}^{\sigma
},\nu))\overline{\mathcal{W}_{P,\sigma}^{\chi}(\psi_{2})(\lambda_{i}^{\sigma
},v_{l}^{\sigma},\nu)}\mu(\sigma,i\nu)d\nu.
\]

\end{corollary}

\begin{proof}
The preceding theorem implies that since all sums are finite
\[
\int_{N_{o}\backslash G}\psi_{1}(g)\overline{\psi_{2}(g)}dg=
\]%
\[
\psi(g)=\sum_{[P]\in\mathcal{P}}\sum_{[\sigma]\in\mathcal{E}_{2}({}^{o}M_{P}%
)}d(\sigma)\sum_{i=1}^{r_{\sigma}}\sum_{l}\int_{N_{o}\backslash G}%
\int_{\mathfrak{a}_{P}^{\ast}}\mathcal{W}_{P,\sigma}^{\chi}(\psi_{1}%
)(\lambda_{i}^{\sigma},v_{l}^{\sigma},\nu))\times
\]%
\[
\overline{J_{\chi^{-1}}(P,\sigma,i\nu)(\lambda_{i}^{\sigma})(\pi_{\bar
{P},\sigma,i\nu}(g)v_{l})}\mu(\sigma,i\nu)d\nu\overline{\psi_{2}(g)}dg.
\]
As in the proof of the above theorem, since both sides of the formula that we
are proving are continuous in $\psi_{1}$ and $\psi_{2}$ we may assume that
$\left\vert \psi_{1}\right\vert \in C_{c}(N_{o}\backslash G)$, so we can
interchange the integrals and get%
\[
\sum_{\lbrack P]\in\mathcal{P}}\sum_{[\sigma]\in\mathcal{E}_{2}({}^{o}M_{P}%
)}d(\sigma)\sum_{i=1}^{r_{\sigma}}\sum_{l}\int_{\mathfrak{a}_{P}^{\ast}%
}\mathcal{W}_{P,\sigma}^{\chi}(\psi_{1})(\lambda_{i}^{\sigma},v_{l}^{\sigma
},\nu))\times
\]%
\[
\int_{N_{o}\backslash G}\overline{J_{\chi^{-1}}(P,\sigma,i\nu)(\lambda
_{i}^{\sigma})(\pi_{\bar{P},\sigma,i\nu}(g)v_{l}\psi_{2}(g)}dg\mu(\sigma
,i\nu)d\nu.
\]
Which is our desired formula.
\end{proof}

\appendix

\section{Tempered estimates}

In this section we collect some estimates that will be needed in the proofs of
the main results. The first result recalls a technique of \cite{Cowling-et-al}%
. This result was also used by Sun in \cite{Sun} which contains versions of
the first few results of this appendix. We include full details of the
argument in \cite{Cowling-et-al} and our version of the results in \cite{Sun}
since we need the independence of parameters in Corollary
\ref{Main-Tempered-Estimate}. The rest of the appendix contains some technical
results needed in the body of the paper. Let $G$ be a real reductive group and
let $(R,L^{2}(G))$ denote the right regular representation of $G$ on
$L^{2}(G)$

\begin{theorem}
Let $f,h\subset L^{2}(G)$ be continuous and be respectively in the
$K$--isotypic components for $\gamma,\mu\in\hat{K}$ relative to $R$. Then
\[
\left\vert \left\langle R(g)f,h\right\rangle \right\vert \leq d(\gamma
)d(\mu)\Xi(g)\left\Vert f\right\Vert \left\Vert g\right\Vert
\]
here the norm is the $L^{2}$--norm
\end{theorem}

\begin{proof}
If $p\in C(G)$ then set
\[
\tilde{p}(g)=\max_{k\in K}\left\vert p(gk)\right\vert .
\]

1. If in addition $\int_{K}p(gk)\chi_{\gamma}(k^{-1})dk=d(\gamma)p(g)$ for all
$g\in G$ then
\[
\left\vert \tilde{p}(g)\right\vert \leq d(\gamma)\left(  \int_{K}\left\vert
p(gk)\right\vert ^{2}dk\right)  ^{\frac{1}{2}}.
\]

Indeed,
\[
p(gk)=d(\gamma)\int_{K}p(gku)\overline{\chi_{\gamma}(u)}du.
\]
so
\[
\left\vert p(gk)\right\vert \leq d(\gamma)\left(  \int_{K}\left\vert
p(gku)\right\vert ^{2}du\right)  ^{\frac{1}{2}}\left(  \int_{K}\left\vert
\chi_{\gamma}(u)\right\vert ^{2}du\right)  ^{\frac{1}{2}.}.
\]
This clearly implies the assertion.

2. If $p\in L^{2}(G)$ is as in 1. then
\[
\left\Vert \tilde{p}\right\Vert \leq d(\gamma)\left\Vert p\right\Vert .
\]

To see this integrate the squares of the two sides of the inequality in 1.

3. If $f,g$ are as above then
\[
\left\vert \left\langle R(g)f,h\right\rangle \right\vert \leq\left\langle
R(g)\tilde{f},\tilde{h}\right\rangle .
\]
This is obvious.

We are left with showing that if $\alpha,\beta\in C(G/K)\cap L^{2}(G)$ are
positive functions then
\[
\left\langle R(g)\alpha,\beta\right\rangle \leq\Xi(g)\left\Vert \alpha
\right\Vert \left\Vert \beta\right\Vert .
\]
Let $P_{o}$ be a minimal parabolic subgroup of $G$ then if $\delta$ is the
modular function of $P_{o}$ and if $u\in L^{1}(G)$ then
\[
\int_{G}u(x)dx=\int_{K}\int_{P_{o}}\delta(p)^{-1}u(pk)dpdk
\]
(here $dp$ is an appropriate choice of right invariant measure on $P_{o}$).
Thus we have
\[
\left\langle R(g)\alpha,\beta\right\rangle =\int_{K}\int_{P_{o}}\delta
(p)^{-1}\alpha(pkg)\beta(p)dpdk
\]
\[
\leq\int_{K}\left(  \int_{P_{o}}\delta(p)^{-1}\alpha(pkg)^{2}dp\right)
^{\frac{1}{2}}dk\left\Vert \beta\right\Vert .
\]
If $x\in G$ then write $x=p(x)k(x)$ then
\[
\int_{K}\left(  \int_{P_{o}}\delta(p)^{-1}\alpha(pkg)^{2}dp\right)  ^{\frac
{1}{2}}dk=\int_{K}\left(  \int_{P_{o}}\delta(p)^{-1}\alpha(pp(kg)k(kg))^{2}%
dp\right)  ^{\frac{1}{2}}dk
\]
\[
=\int_{K}\delta(p(kg))^{\frac{1}{2}}dk\left(  \int_{P_{o}}\delta(p)^{-1}%
\alpha(p)^{2}dp\right)  ^{\frac{1}{2}}=\Xi(g)\left\Vert \alpha\right\Vert .
\]

\end{proof}

\begin{theorem}
(compare \cite{Sun}) \label{Sun}Let $(\pi,H)$ be an irreducible square
integrable representation of $G$ then there exists a continuous semi-norm, $q
$, on $H^{\infty}$such that
\[
\left\vert \left\langle \pi(g)v,w\right\rangle \right\vert \leq\Xi(g)q(v)q(w)
\]
for all $g\in G$.
\end{theorem}

\begin{proof}
Let $u\in H$ be a unit vector. If $T(z)(g)=\sqrt{d(\pi)}\left\langle
\pi(g)z,u\right\rangle $ then the Schur orthogonality relations imply that $T
$ is an isometry hence
\[
\left\vert \left\langle \pi(g)v,w\right\rangle \right\vert =\left\vert
\left\langle R(g)T(v),T(w)\right\rangle \right\vert
\]
\[
\leq d(\gamma)d(\tau)\left\Vert T(v)\right\Vert \left\Vert T(w)\right\Vert
\Xi(g)=d(\gamma)d(\tau)\left\Vert v\right\Vert \left\Vert w\right\Vert \Xi(g)
\]
Thus if $v,w\in H^{\infty}$ and $v=\sum_{\gamma\in\hat{K}}v_{\gamma}$ and
$w=\sum_{\gamma\in\hat{K}}w_{\gamma}$ then
\[
\left\vert \left\langle \pi(g)v,w\right\rangle \right\vert =\left\vert
\sum_{\gamma,\tau}\left\langle \pi(g)v_{\gamma},w_{\tau}\right\rangle
\right\vert \leq\sum_{\gamma,\tau}\left\vert \left\langle \pi(g)v_{\gamma
},w_{\tau}\right\rangle \right\vert
\]
\[
\leq\Xi(g)\sum_{\gamma,\tau\in\hat{K}}d(\gamma)d(\tau)\left\Vert v_{\gamma
}\right\Vert \left\Vert w_{\tau}\right\Vert .
\]
We note that if $C_{K}$ is the Casimir operator of $K$ \ and if $\lambda
_{\gamma}$is the eigenvalue of $C_{K}$on corresponding to $\gamma$ and if
$v\in H^{\infty}$ then
\[
\left\Vert d\pi(\left(  I+C_{K}\right)  ^{r})v\right\Vert ^{2}=\sum
(1+\lambda_{\gamma})^{r}\left\Vert v_{\gamma}\right\Vert ^{2}.
\]
This implies that
\[
\left\Vert v_{\gamma}\right\Vert \leq\left(  \lambda_{\gamma}+1\right)
^{-\frac{r}{2}}\left\Vert (\left(  I+C_{K}\right)  ^{r})v\right\Vert
\]
so
\[
\sum_{\gamma\in\hat{K}}d(\gamma)\left\Vert v_{\gamma}\right\Vert
\leq\left\Vert d\pi(\left(  I+C_{K}\right)  ^{r})v\right\Vert \sum_{\gamma
\in\hat{K}}d(\gamma)(1+\lambda_{\gamma})^{-\frac{r}{2}}.
\]
If $r$ is sufficiently large then the series converges. So define
\[
q(v)=\sum_{\gamma\in\hat{K}}d(\gamma)\left\Vert v_{\gamma}\right\Vert .
\]

\end{proof}

\begin{corollary}
\label{Main-Tempered-Estimate} Let $P$ be a standard parabolic subgroup, if
$\sigma$ is a square integrable representation of $^{o}M_{P}$ and if $v,w$ are
continuous maps of $K$ to $H_{\sigma}^{\infty}$ that are elements of
$\mathrm{Ind}_{M_{P}\cap K}^{K}(\sigma_{|_{M_{P}\cap K}})$ \ then
\[
\left\vert \left\langle \pi_{\bar{P},\sigma,i\nu}(g)v,w\right\rangle
\right\vert \leq\Xi(g)r(v)r(w)
\]
with $r(v)=\max_{k\in K}q_{M_{P}}(v(k))$. Note that $r$ depends on $\sigma$
and not on $\nu$.
\end{corollary}

\begin{proof}%
\[
\left\langle \pi_{i\nu}(g)v,w\right\rangle =\int_{K}\left\langle v_{i\nu
}(kg),w(k)\right\rangle dk
\]
\[
=\int_{K}a_{\bar{P}}(kg)^{i\nu-\rho}\left\langle \sigma
(m(kg))v(k(kg)),w(k)\right\rangle dk.
\]
So
\[
\left\vert \left\langle \pi_{i\nu}(g)v,w\right\rangle \right\vert \leq\int
_{K}a_{\bar{P}}(kg)^{-\rho_{P}}|\left\langle \sigma
(m(kg))v(k(kg)),w(k)\right\rangle |dk
\]
\[
\leq\int_{K}a_{\bar{P}}(kg)^{-\rho_{P}}\Xi_{^{o}M_{P}}%
(m(kg))q(v(k(gk))q(w(k))dk
\]
\[
\leq\int_{K}a_{\bar{P}}(kg)^{-\rho_{P}}\Xi_{^{o}M_{P}}(m(kg))dkr(v)r(w).
\]
We note that
\[
\int_{K}a_{\bar{P}}(kg)^{-\rho_{P}}\Xi_{^{o}M_{P}}(m(kg))dk=\Xi(g).
\]
Indeed
\[
\int_{K}a_{\bar{P}}(kg)^{-\rho_{P}}\Xi_{^{o}M_{P}}(m(kg))dk
\]
\[
=\int_{K}a_{\bar{P}}(kg)^{-\rho_{P}}\int_{K\cap M}a_{M\cap\bar{P}_{o}}%
(k_{1}m(kg))^{-\rho_{M\cap\bar{P}_{o}}}dk_{1}dk.
\]
Reversing the order of integration and noting that $k_{1}m(kg)=m(k_{1}kg)$
and
\[
a_{\bar{P}}(g)^{-\rho_{\bar{P}}}a_{\bar{P}_{o}\cap\cap{}^{o}M}(m(g))^{-\rho
_{M\cap\bar{P}_{o}}}=a_{\bar{P}_{o}}(g)^{-\rho_{o}}%
\]
yields
\[
\int_{K}a_{\bar{P}}(kg)^{-\rho_{P}}a_{^{o}M\cap\bar{P}_{o}}(m(kg))^{-\rho
_{M\cap\bar{P}_{o}}}dk=\Xi(g).
\]
The corollary follows.
\end{proof}

Let $P$ be a standard parabolic subgroup and let $\bar{P}$ be the standard
opposite parabolic subgroup to $P$. Let $\bar{N}$ be the unipotent radical of
$\bar{P}$ and let $\bar{P}={}^{o}M_{P}A_{P}\bar{N}_{P}$ be its $K$--standard
Langlands decomposition. write for $g\in G,$
\[
g=\bar{n}(g)a_{\bar{P}}(g)m_{\bar{P}}(g)k_{\bar{P}}(g)
\]
$\bar{n}(g)$ $\in\bar{N}_{P},a_{\bar{P}}(g)\in A_{P}$, $m_{\bar{P}}(g)\in
{}^{o}M_{P}$, $k_{\bar{P}}(g)\in K$ with the usual ambiguity in $m_{\bar{P}}$
and $k_{\bar{P}.}$.

\begin{lemma}
\label{Simple-Estimate}Let $\omega\subset G$ be a compact subset then there
exists a positive constant $C_{\omega}$ such that if $g\in\omega$ then
\[
C_{\omega}^{-1}a_{\bar{P}}(x)^{\rho_{P}}\leq a_{\bar{P}}(xg)^{\rho_{P}}\leq
C_{\omega}a_{\bar{P}}(x)^{\rho_{P}}.
\]

\end{lemma}

\begin{proof}
Let $r=\dim\mathfrak{\bar{n}}_{P}$ consider the $G$ module $V=\wedge
^{r}\mathfrak{g}$. Let $\left\Vert ...\right\Vert _{V}$ denote the norm on $V$
and the operator norm on $End(V)$ corresponding to extension of the inner
product $\left\langle X,Y\right\rangle =-B(X,\theta Y)$, $X,Y\in\mathfrak{g} $
to $\wedge^{r}\mathfrak{g}$. If $\xi$ is a unit vector in $\wedge
^{r}\mathfrak{\bar{n}}_{P}$ and $g\in G$ then
\[
g^{-1}\xi=a_{\bar{P}}(g)^{2\rho_{P}}k_{\bar{P}}(g)^{-1}\xi.
\]
Thus
\[
\left\Vert g^{-1}\xi\right\Vert _{V}=a_{\bar{P}}(g)^{2\rho_{P}}.
\]
Thus
\[
a_{\bar{P}}(xg)^{2\rho_{P}}=\left\Vert g^{-1}x^{-1}\xi\right\Vert
\leq\left\Vert g\right\Vert _{V}\left\Vert x^{-1}\xi\right\Vert _{V}%
=\left\Vert g\right\Vert _{V}a_{\bar{P}}(x)^{2\rho_{P}},
\]
Also
\[
\left\Vert x^{-1}\xi\right\Vert _{V}=\left\Vert gg^{-1}x^{-1}v\right\Vert
_{V}\leq\left\Vert g\right\Vert _{V}\left\Vert g^{-1}x^{=1}\xi\right\Vert _{V}%
\]
so
\[
a_{\bar{P}}(xg)^{2\rho_{P}}\geq\left\Vert g\right\Vert _{V}^{-1}a_{\bar{P}%
}(x)^{2\rho_{P}}.
\]
Let $C_{\omega}=\max_{g\in\omega}\left\Vert g\right\Vert _{V}^{\frac{1}{2}}$.
\end{proof}

\begin{proposition}
\label{First N-estimate}Notation as in the previous corollary. If $v,w\in
I_{\sigma}^{\infty}$, if $\omega$ is a compact subset of $G$ and if
$g\in\omega,n_{o}\in N_{o}$ and $\operatorname{Re}z\geq0$ then there exist
constants $C_{\omega}^{\prime},C_{\omega}^{\prime\prime}$ such that
\[
\left\vert \int_{N_{P}}\left\langle v_{i\nu-z\rho_{P}}(nn_{o}g),w_{i\nu
-\bar{z}\rho_{P}}(n)\right\rangle dn\right\vert \leq\left(  C_{\omega}%
^{\prime}\right)  ^{1+\operatorname{Re}z}\Xi(n_{o}g)r(v)r(w)\leq.\left(
C_{\omega}^{\prime\prime}\right)  ^{1+\operatorname{Re}z}\Xi(n_{o})r(v)r(w).
\]
Here $v_{\nu}=v_{\bar{P},\sigma,\nu}.$
\end{proposition}

\begin{proof}
We have
\[
\int_{N_{P}}\left\langle v_{i\nu-z\rho_{P}}(nn_{o}g),w_{i\nu-\bar{z}\rho
}(n)\right\rangle dn
\]%
\[
=\int_{N_{P}}a_{\bar{P}}(nn_{o}g)^{i\nu-(1+z)\rho_{P}}a_{\bar{P}}%
(n)^{-i\nu-(1+z)\rho_{P}}\left\langle \sigma(m_{\bar{P}}(nn_{o}g))v(k(nn_{o}%
g)),\sigma(m_{\bar{P}}(n))w(k(n))\right\rangle dn
\]
thus taking absolute values we have
\[
\left\vert \int_{N_{P}}\left\langle v_{i\nu-z\rho_{P}}(nn_{o}g),w_{i\nu
-\bar{z}\rho}(n)\right\rangle dn\right\vert
\]%
\[
\leq\int_{N_{P}}a_{\bar{P}}(nn_{o}g)^{-(1+\operatorname{Re}z)\rho_{P}}%
a_{\bar{P}}(n)^{-(1+\operatorname{Re}z)\rho_{P}}\left\langle \sigma(m_{\bar
{P}}(nn_{o}g))v(k(nn_{o}g)),\sigma(m_{\bar{P}}(n))w(k(n))\right\rangle dn.
\]
The previous lemma implies that this expression is
\[
\leq C_{\omega}^{2+m}\int_{N_{P}}a_{\bar{P}}(nn_{o})^{-(1+\operatorname{Re}%
z)\rho_{P}}a_{\bar{P}}(n)^{-(1+\operatorname{Re}z)\rho_{P}}\left\vert
\left\langle \sigma(m_{\bar{P}}(nn_{o}g))v(k(nn_{o}g)),\sigma(m_{\bar{P}%
}(n))w(k(n))\right\rangle \right\vert dn
\]%
\[
\leq C_{\omega}^{2+m}\int_{N_{P}}a_{\bar{P}}(nn_{o})^{-\rho_{P}}a_{\bar{P}%
}(n)^{-\rho_{P}}\left\vert \left\langle \sigma(m_{\bar{P}}(nn_{o}%
g))v(k(nn_{o}g)),\sigma(m_{\bar{P}}(n))w(k(n))\right\rangle \right\vert dn
\]
since $\alpha_{\bar{P}}(n)^{\rho_{P}}\geq1$ for $n\in N_{o}$. Now applying the
lemma again this expression is
\[
\leq C_{\omega}^{3+m}\int_{N_{P}}a_{\bar{P}}(nn_{o}g)^{-\rho_{P}}a_{\bar{P}%
}(n)^{-\rho_{P}}\left\vert \left\langle \sigma(m_{\bar{P}}(nn_{o}%
g))v(k(nn_{o}g)),\sigma(m_{\bar{P}}(n))w(k(n))\right\rangle \right\vert dn.
\]
Also Theorem \ref{Sun} implies
\[
\left\vert \left\langle \sigma(m_{\bar{P}}(nn_{o}g))v(k(nn_{o}g)),\sigma
(m_{\bar{P}}(n))w(k(n))\right\rangle \right\vert \leq\Xi_{M_{P}}(m_{P}%
(n)^{-1}m_{P}(nn_{o}g))r(v)r(w).
\]
To complete the proof we need to show that
\[
\int_{N_{P}}a_{\bar{P}}(ng)^{-\rho_{P}}a_{\bar{P}}(n)^{-\rho_{P}}\Xi_{M_{P}%
}(m_{\bar{P}}(n)^{-1}m_{\bar{P}}(ng))dn=\Xi(g).
\]
Indeed, note then
\[
\Xi(g)=\left\langle \pi_{\bar{P}_{o},1,0}(g)1,1\right\rangle .
\]
Set $\sigma=\pi_{\bar{P}_{o},1,0}$ then induction in stages implies that if
$v\in I_{\sigma}^{\infty}$ corresponds to $1$ then using the notation
\[
v_{0}=_{{}\bar{P}}v_{\sigma,0}%
\]
we have
\[
\Xi(g)=\left\langle \pi_{\bar{P}_{o}\cap M_{p},\sigma,0}(g)v,v\right\rangle
=\int_{N_{P}}\left\langle v_{0}(ng),v_{0}(n)\right\rangle dn
\]%
\[
=\int_{N_{P}}a_{\bar{P}}(ng)^{-\rho_{P}}a_{\bar{P}}(n)^{-\rho_{P}}\left\langle
\sigma(m_{\bar{P}}(ng)1,\sigma(m_{\bar{P}}(n))1\right\rangle dn
\]%
\[
=\int_{N_{P}}a_{\bar{P}}(ng)^{-\rho_{P}}a_{\bar{P}}(n)^{-\rho_{P}}\left\langle
\sigma(m_{\bar{P}}(n))^{-1}\sigma(m_{\bar{P}}(ng)1,1\right\rangle dn
\]%
\[
=\int_{N_{P}}a_{\bar{P}}(ng)^{-\rho_{P}}a_{\bar{P}}(n)^{-\rho_{P}}\Xi_{M_{P}%
}(m_{\bar{P}}(n)^{-1}m_{\bar{P}}(ng))dn
\]

\end{proof}

Observing that if $v_{o}$ is a unit vector in $\wedge^{n}Lie(\bar{N}_{o}),$
with $n=\dim\bar{N}_{o}$ , and if $g\in G$
\[
a_{\bar{P}_{o}}(g)^{\rho_{o}}\leq\left\Vert g^{-1}v_{o}\right\Vert
\leq\left\Vert g^{-1}\right\Vert =\left\Vert g\right\Vert .
\]
Using inequality 7. in section 3 we have

\begin{lemma}
$\Xi(g)\leq C_{2}a_{\bar{P}_{o}}(g)^{-\rho_{o}}(1+\log\left\Vert g\right\Vert
)^{d.}$.
\end{lemma}

This leads to the following variant of the above proposition.

\begin{corollary}
\label{N-extimate}Notation as in the previous corollary. If $v,w\in I_{\sigma
}^{\infty}$, if $\omega$ is a compact subset of $G$ and if $g\in\omega
,n_{o}\in N_{o}$ and $\operatorname{Re}z\,\geq0$ then
\[
\left\vert \int_{N_{P}}\left\langle v_{i\nu-z\rho_{P}}(nn_{o}g),w_{i\nu
-\bar{z}\rho_{P}}(n)\right\rangle dn\right\vert \leq.C_{2}\left(  C_{\omega
}^{\prime}\right)  ^{1+\operatorname{Re}z}a_{\bar{P}_{o}}(n_{o})^{-\rho_{o}%
}(1+\log\left\Vert n_{o}\right\Vert )^{d.}r(v)r(w).
\]

\end{corollary}

\section{The condition of regularity}

Let the notation be as in the proof of Theorem \ref{key-Theorem} we use the
continuity of $J_{\chi_{Y}}(P,\sigma,\nu)(\lambda)$ in $Y$ for $Y\in U$ (that
is $\chi_{Y}$ if generic). Here the character $\chi_{Y}$ on $N_{o}$ is given
by%
\[
\chi_{Y}(\exp X)=e^{-iB(\theta Y,X)}.
\]
One can prove this directly by running through the proof of the holomorphic
continuation of the Jacquet integral in sections 15.5,15.6,15.7 of
\cite{RRGII} keeping track of the dependence in $Y$. In this appendix we will
give a proof using an idea hinted at by the referee for split groups and real
rank one groups. To prove this we will need to have a better idea of the
implications of the condition that $\chi_{Y}$ is generic.

The definition that we gave in Section 4 of generic was that if $\alpha$ is a
simple root in $\Phi(P_{o},A_{o})$ (the roots of $A_{o}$ acting on
$\mathfrak{n}_{o}$) then $d\chi_{|\left(  \mathfrak{n}_{o}\right)  _{\alpha}%
}\neq0$. Let $M_{o}$ act on the space $Z$ via $Ad_{|M_{o}}$ $Z$ in the proof
of Theorem \ref{key-Theorem} (the sum of the simple root spaces). The
following result is well known and due to Kostant and Rallis but is hard to
directly reference the result (with a proof) over $\mathbb{R}$ we include a
simple (elementary) proof of it based on the representation theory of
$SL(2,\mathbb{R})$.

\begin{lemma}
Let $Z$ and $U$ be as in the proof of Theorem \ref{key-Theorem}. Then the
orbits of $M_{o}$ on $U$ are open in $Z$.
\end{lemma}

Recall that if $\{x,y,h\}$ is a basis of a Lie algebra, $\mathfrak{s}$, over
$\mathbb{R}$ with commutation relations
\[
\lbrack x,y]=h,[h,x]=2x,[h,y]=-2y
\]
and if $V$ is a finite dimensional $\mathfrak{s}$--module then $h$ acts
semi--simply on $V$ with integral eigenvalues. Thus $h$ induces a grade on $V
$, that is, $V=\oplus_{k}V_{k}$ with $V_{k}$ the $k$--eigenspace. The other
fact we will use is that%

\[
xV_{k}=V_{k+2},k\in\mathbb{Z}.
\]

Let $Z$ and $U\subset Z\subset\mathfrak{n}_{o}$ be as in the proof of Theorem
\ref{key-Theorem}. If $x\in U$ then (by definition) $x=\sum_{\alpha\in\Delta
}x_{\alpha}$ with $x_{\alpha}\in\mathfrak{g}_{\alpha}$ and $x_{\alpha}\neq0$
for $\alpha\in\Delta$. We note that if $\alpha,\beta\in\Delta$ then
\[
\lbrack\theta(\mathfrak{g}_{\beta}),\mathfrak{g}_{\alpha}]=0,\alpha\neq\beta.
\]
Also,
\[
\lbrack x_{\alpha},\theta x_{\alpha}]\in\mathfrak{a}_{o}.
\]
So%
\[
\lbrack x,\theta(\sum c_{\alpha}x_{\alpha})]=\sum c_{\alpha}[x_{\alpha},\theta
x_{\alpha}].
\]
If $\alpha\in\Delta$ let $h_{a}\in\mathfrak{a}_{o}$ be defined by
\[
B(h_{\alpha},u)=\alpha(u),u\in\mathfrak{a}_{o}.
\]
Noting that%
\[
\beta([x,\theta(\sum c_{\alpha}x_{\alpha})])=\sum c_{\alpha}B([x_{\alpha
},\theta x_{\alpha}],h_{\beta})
\]%
\[
=\sum c_{\alpha}B(x_{\alpha},[h_{\beta,},\theta x_{\alpha}])=-\sum c_{\alpha
}B(x,\theta x_{\alpha})(\alpha,\beta),
\]
There exist $c_{\alpha}\in\mathbb{R}$ such that if $y=\theta(\sum c_{\alpha
}x_{\alpha})$ then $\beta([x.y])=2$ for all $\beta\in\Delta$. This implies
that $x,y,h=[x,y]$ is a basis of a Lie algebra $\mathfrak{s}$ as above. We are
ready to prove the Lemma.

\begin{proof}
(of the Lemma) Let $x\in U$ and let $h,y,\mathfrak{s}$ be as above.
$\mathfrak{g}$ \ is an $\mathfrak{s}$--module under $ad$. Let $\mathfrak{g}%
=\oplus\mathfrak{g}_{k}$ be the corresponding grade. Since $h$ is a regular
element in $\mathfrak{a}_{o}$, we have $\mathfrak{g}_{0}=Lie(M_{o}),$ also
$\mathfrak{g}_{2}=Z$. Thus $ad(Lie(M_{o}))x=Z$. This implies that $Ad(M_{o})x$
is open.
\end{proof}

We return to the situation of the proof of Theorem \ref{key-Theorem}. Let $P$
be a standard parabolic subgroup of $G$ and let $\sigma$ be a square
integrable representation of ${}^{o}M$. Let $U_{M}$ be the set of regular
elements of $Z\cap Lie(M).$

\begin{lemma}
\label{Locallsigma}Fix a connected component of $U_{M}$, $U^{\prime}$ then
there exists $d$ such that $\dim Wh_{\chi_{x}}(H_{\sigma}^{\infty})=d$ for
$x\in U^{\prime}$. If $y\in U^{\prime}$ then there exists $U_{y}^{\prime}$ an
open neighborhood of $y$ in $U^{\prime}$ and $\lambda_{1}....,\lambda_{d},$
weakly continuous functions
\[
\lambda_{i}:U_{y}^{\prime}\rightarrow\left(  H_{\sigma}^{\infty}\right)
^{\prime},
\]
such that for each $x\in U_{y}^{\prime}$, $\lambda_{i}(x)\in Wh_{\chi_{x}%
}(H_{\sigma}^{\infty})$ and $\lambda_{1}(x),...,\lambda_{d}(x)$ is a basis of
$Wh_{\chi_{x}}(H_{\sigma}^{\infty})$ for each $x\in U_{y}^{\prime}$.
\end{lemma}

\begin{proof}
Let $x_{o}\in U^{\prime}$ and let $\mu_{1},...,\mu_{d}$ be a basis of
$Wh_{\chi_{x_{o}}}(H_{\sigma}^{\infty})$. If $x\in U^{\prime}$ then
$x=Ad(\theta(m))x_{o}$ for some $m\in M_{o}\cap M$. Consider, $\xi_{i}=\mu
_{i}\circ\sigma(m)$. Then $\xi_{i}\in Wh_{\chi_{x}}(H_{\sigma}^{\infty})$ and
are clearly linearly independent. Since we can do the same starting with $x$
and using $m^{-1}$ we see that $\dim Wh_{\chi_{x_{o}}}(H_{\sigma}^{\infty})=d$
all $x\in U^{\prime}.$ Let $y\in U^{\prime}$ and let $W$ be a submanifold of
$M_{o}\cap M$ such that $Ad(W)y$ is an open neighborhood, $U_{y}^{\prime},$of
$y$ in $U^{\prime}$ and the map
\[
W\rightarrow Ad(W)y
\]
given by $w\mapsto Ad(w)y$ is a diffeomorphism. Let $\psi$ denote the inverse
diffeomophism. Let $\mu_{1},...,\mu_{d}$ be a basis of $Wh_{\chi_{y}%
}(H_{\sigma}^{\infty})$ and define $\lambda_{i}(x)=\mu_{i}\circ\sigma
(\psi(x))$ for $x\in U_{y}^{\prime}.$
\end{proof}

The main result of this appendix is

\begin{proposition}
\label{Continuity}Notation as above. Let $U^{\prime}$ be a connected component
of $U$ and let $\lambda:U^{\prime}\rightarrow\left(  H_{\sigma}^{\infty
}\right)  ^{\prime}$ be weakly continuous and such that $\lambda(x)\in
Wh_{\chi_{x}}(H_{\sigma}^{\infty})$ for $x\in U^{\prime}$. Then the map%
\[
x,\nu\mapsto J_{\chi_{x}}(P,\sigma,\nu)(\lambda(x))
\]
is strongly continuous from $U^{\prime}\times(\mathfrak{a}_{P})_{\mathbb{C}%
}^{\ast}$ to $(I_{\bar{P},\sigma}^{\infty})^{\prime}$ and holomorphic in $\nu$.
\end{proposition}

\begin{proof}
We note that if $m\in M_{o}$ and $m=a{}^{o}m$ with $a\in A_{P}$ and$\ {}%
^{o}m\in{}^{o}M_{P}$ then%
\[
J_{\chi_{x}}(P,\sigma,\nu)(\lambda(x))\circ\pi_{\bar{P},\sigma,\nu
}(m))=a^{-2\rho_{P}+\nu}J_{\chi_{Ad(\theta m)y}}(P,\sigma,\nu)(\lambda
(x)\circ\sigma(m)).
\]
Now argue as in the proof of Lemma \ref{Locallsigma} using cross sections of
the orbit map of $M_{o}$ onto $U^{\prime}$.
\end{proof}

\end{document}